\newtheorem{theorem}{Theorem}[section]
\newtheorem{lemma}[theorem]{Lemma}
\newtheorem{conjecture}[theorem]{Conjecture}
\newtheorem{definition}[theorem]{Definition}
\theoremstyle{definition}
\newtheorem{question}{Question}
\newtheorem{rmk}{Remark}
\newtheorem{claim}[theorem]{Claim}
\newtheorem{fact}[theorem]{Fact}
\newcommand{\btheorem}{\begin{theorem}}
\newcommand{\etheorem}{\end{theorem}}
\newcommand{\bconjecture}{\begin{conjecture}}
\newcommand{\econjecture}{\end{conjecture}}
\newcommand{\bproposition}{\begin{proposition}}
\newcommand{\eproposition}{\end{proposition}}
\newcommand{\bdefinition}{\begin{definition}}
\newcommand{\edefinition}{\end{definition}}
\newcommand{\bcorollary}{\begin{corollary}}
\newcommand{\ecorollary}{\end{corollary}}
\newcommand{\bproof}{\begin{proof}}
\newcommand{\eproof}{\end{proof}}
\newcommand{\bclaim}{\begin{claim}}
\newcommand{\eclaim}{\end{claim}}
\newcommand{\bquestion}{\begin{question}}
\newcommand{\equestion}{\end{question}}
\newcommand{\bfact}{\begin{fact}}
\newcommand{\efact}{\end{fact}}
\newcommand{\bremark}{\begin{remark}}
\newcommand{\eremark}{\end{remark}}
\newcommand{\eexample}{\end{example}}
\newcommand{\bexample}{\begin{example}}
\newcommand{\elemma}{\end{lemma}}
\newcommand{\blemma}{\begin{lemma}}
\newcommand{\be}{\beta}
\newcommand{\eps}{\varepsilon}
\newcommand{\<}{\subseteq}
\newcommand{\De}{\Delta}
\newcommand{\de}{\delta}
\title{Clique factors in random samplings of regular graphs}
\author{
Wanting Sun\thanks{Data Science Institute, Shandong University, Jinan, China. Email: {\tt wtsun@sdu.edu.cn}.},\ \ 
Shunan Wei\thanks{School of Mathematics, Shandong University, Jinan, China. Email: {\tt snwei@mail.sdu.edu.cn}.},\ \ 
Donglei Yang\thanks{School of Mathematics, Shandong University, Jinan, China. Email: {\tt dlyang@sdu.edu.cn}.}}
\date{\today}
\begin{document}
\maketitle
\linespread{1.2}

\begin{abstract}
We show that for any integer $r\ge 2$, there exists a constant $c>0$ such that for every sufficiently large integer $n$, every $((r-1)n+1)$-regular graph $G$ on $rn$ vertices has at least $c2^{rn}$ subsets $S\subseteq V(G)$ such that $G[S]$ contains a $K_r$-factor. This confirms a conjecture of Dragani\'c, Keevash and M\"uyesser for large $n$ [\textit{Cyclic subsets in regular Dirac graphs. Int. Math. Res. Not., 2025(14): 1-16, 2025}].
\end{abstract}


\section{Introduction}
A classical line of research in extremal graph theory concerns about the  minimum degree condition that guarantees the existence of certain spanning subgraphs.   Once such a result is established,  a natural further question is to measure the \textit{robustness} of the graph with respect to the property of containing the given spanning structure. This direction of research aims to strengthen classical results in extremal and probabilistic combinatorics.

Hamiltonicity is one of the
most central problems in graph theory. A classical result proved by Dirac in 1952~\cite{Dirac} states that for $n\geq 3$, every $n$-vertex graph $G$ with $\delta(G) \ge \frac{n}{2}$ (known as \textit{Dirac graph}) contains a Hamilton cycle.  
A natural way to strengthen Dirac’s theorem is to ask how many Hamilton cycles a Dirac graph must contain. S\'{a}rk\"{o}zy, Selkow and Szemer\'{e}di~\cite{Sarkozy2003} proved that every Dirac graph contains at least $c^nn!$ Hamilton cycles for some small positive constant $c$, and conjectured that $c$ can be improved to $\frac{1}{2}-o(1)$. This conjecture was resolved by Cuckler and Kahn~\cite{Cuckler2009}. 
{We refer the reader to a survey of Sudakov~\cite{Sudakov2017} where various measures of robustness and relevant results are collected.}


We introduce the robustness problem from the following two perspectives. 
Let $G$ be a graph with a property $\mathcal{P}$.
On the one hand, we sample each edge uniformly at random in $G$ and study the threshold for the property $\mathcal{P}$ of the resulting random subgraph. 
{More precisely, let $G(p)$ be a random subgraph of $G$ such that every edge in $G$ is kept independently and randomly with probability $p$.} 
Krivelevich, Lee and Sudakov~\cite{Krivelevich2014} studied the threshold for hamiltonicity and showed that there exists a constant $C$ such that for any $n$-vertex Dirac graph $G$ and $p\ge \frac{C\log n}{n}$,  
\textit{with high probability}\footnote{We say that an event $E$ holds \textit{with high probability,} (w.h.p. for short) if $\mathbb{P}[E]\rightarrow 1$ as $n$ tends to infinity.} $G(p)$ is Hamiltonian.
On the other hand, one may sample vertices rather than edges at random and investigate the probability that the induced subgraph on such a random vertex set still possesses property \(\mathcal{P}\). {In this direction, Erd\H{o}s and Faudree~\cite{Erdos1999} asked whether every regular Dirac graph contains a constant proportion of Hamiltonian vertex subsets.} 

\begin{conjecture}[\cite{Erdos1999}]\label{conj:Hamilton cycle}
Any $(n+1)$-regular graph $G$ on $2n$ vertices has at least $c2^{2n}$ subsets of $V(G)$ inducing a Hamiltonian subgraph for some absolute $c>0$.
\end{conjecture}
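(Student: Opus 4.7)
The plan is to use the probabilistic method: sample $S \subseteq V(G)$ by including each vertex independently with probability $1/2$, and aim to show that $\mathbb{P}[G[S]\text{ is Hamiltonian}] \geq c$ for some absolute constant $c > 0$. Since $G$ has $2^{2n}$ subsets in total, this immediately yields at least $c \cdot 2^{2n}$ Hamiltonian induced subgraphs.

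The first step is a concentration analysis: by Chernoff's inequality and a union bound over the $2n$ vertices, with high probability $|S| = n \pm O(\sqrt{n \log n})$, and for every vertex $v \in V(G)$, $|N_G(v) \cap S| = (n+1)/2 \pm O(\sqrt{n \log n})$. In particular, $G[S]$ is ``near-Dirac'': $\delta(G[S]) \geq |S|/2 - O(\sqrt{n \log n})$ with high probability.

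The immediate obstacle is that this near-Dirac condition is too weak to invoke Dirac's theorem directly: the expected degree of a vertex of $S$ in $G[S]$ is $(n+1)/2$, only $1/2$ above the threshold $|S|/2 \approx n/2$, while fluctuations are of order $\sqrt{n}$, so typical vertices fall below the threshold with roughly constant probability. To bypass this I would pursue either (i) a robust Hamiltonicity theorem, such as the K\"uhn-Osthus robust-expander framework, showing that $G[S]$ is with high probability a robust expander of linear density and hence Hamiltonian; or (ii) an absorption argument: fix within $G$ a small absorbing gadget $A$ with the property that for every balanced small $T \subseteq V(G)\setminus A$, $G[A \cup T]$ contains a Hamilton path with prescribed endpoints, and argue that with positive probability $A \subseteq S$ and $G[S \setminus A]$ contains a long path that closes through $A$ into a Hamilton cycle. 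The hard part will be constructing such an absorbing structure using only the sharp Dirac-regular condition $\delta(G) = n+1$, while ensuring it survives random vertex deletion with constant probability; this likely requires producing linearly many pairwise disjoint candidate absorbers and using a Cuckler-Kahn-style entropy or permanent bound to guarantee at least one is fully present in $S$.
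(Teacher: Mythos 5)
This statement is a conjecture that the paper does not prove itself; it is attributed to Erd\H{o}s and Faudree and cited as resolved for large $n$ by Dragani\'c, Keevash and M\"uyesser~\cite{Keevash2025}, who show the stronger fact that $G[\frac{1}{2}]$ is Hamiltonian with probability at least $\frac{1}{2}$. The present paper proves the analogous statement for $K_r$-factors, and its strategy is the right point of comparison.

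Your proposal correctly sets up the probabilistic sampling and correctly diagnoses the central difficulty: the fluctuations of $d_{G[S]}(v)$ are of order $\sqrt{n}$, so near-Dirac is not Dirac, and a constant fraction of vertices will typically sit below $|S|/2$. But the two routes you offer to escape this do not, as stated, work, and the reason is precisely what the paper's architecture is built to handle. If $G$ is \emph{extremal} -- say $G$ is $K_{n-1,n+1}$ with a $2$-factor added on the larger side, which the paper notes is the tight example -- then $G[S]$ is essentially bipartite with parts whose sizes fluctuate independently; it is \emph{not} a robust expander with any positive probability (robust expansion is destroyed by the near-bipartite structure), so route (i) fails outright. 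Route (ii) faces the same wall: any absorbing gadget $A$ you preselect inside $G$ lives inside a near-bipartite graph, and the existence of a Hamilton cycle in $G[S]$ is governed by a global balance event (roughly, $|A_1\cap S|\le |A_2\cap S|$ up to the number of matching edges available in $G[A_2\cap S]$), which your local absorber cannot influence and which holds with probability bounded away from $1$. The missing ingredient is the extremal/non-extremal dichotomy: in the non-extremal regime one can indeed invoke a robust Hamiltonicity theorem that tolerates $\delta(G[S])\ge (1/2-o(1))|S|$ in the absence of large sparse vertex sets (this is the analogue of the paper's Theorem~\ref{thm:Non-extremal case}), while in the extremal regime one must carry out a structural partition analysis and compute the probability of the balance event by a central-limit / anticoncentration argument (the analogue of the paper's good-partition machinery and Lemma~\ref{lem-int}). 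Without that case split, the proposal has a genuine gap rather than a fillable technical lacuna.
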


{This conjecture is tight in several ways: the \((n+1)\)-regularity cannot be relaxed to \(n\)-regularity (witnessed by \(K_{n,n}\)) nor weakened to a minimum degree condition (illustrated by adding a spanning star to each class of \(K_{n,n}\)). 
Conjecture~\ref{conj:Hamilton cycle} was recently solved for large $n$ by Dragani\'c, Keevash and M\"uyesser~\cite{Keevash2025}, who showed that a uniformly random vertex subset of an $(n+1)$-regular $2n$-vertex graph induces a Hamiltonian graph with probability at least $\frac{1}{2}$. The bound  $\frac{1}{2}$ is tight, as shown by the example of the  complete bipartite graph $K_{n-1,n+1}$ with a $2$-factor added to the larger side.    
Very recently, Hunter, Liu, Milojevi\'c and Sudakov~\cite{hunter2025}  investigated a tournament analogue of Conjecture~\ref{conj:Hamilton cycle} under a minimum semi-degree condition.} 

\subsection{The Hajnal-Szemer\'edi theorem}
Given graphs $G$ and $H$, an \textit{$H$-tiling} in $G$ is a collection of vertex-disjoint copies of $H$. 
A \textit{perfect $H$-tiling} (or \textit{$H$-factor}) is one that covers every vertex of $G$.  A celebrated theorem of Hajnal and Szemer\'edi~\cite{HajnalSz} provides the best possible minimum degree condition that guarantees the existence of a $K_r$-factor (the case $r=3$ was previously obtained by Corr\'adi and Hajnal~\cite{Corradi}), {which answers a conjecture of Erd\H{o}s \cite{E1964}. }
\begin{theorem}[\cite{Corradi, HajnalSz}]
\label{thm:HSz}
Let $G$ be an $n$-vertex graph with $n\in r\mathbb{N}$. If $\delta(G)\ge \left(1-\frac{1}{r}\right)n$, then $G$ contains a $K_r$-factor.
\end{theorem}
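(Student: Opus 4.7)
The plan is to prove Theorem~\ref{thm:HSz} by induction on $n$ combined with an exchange (augmentation) argument on a maximum $K_r$-tiling. Let $G$ satisfy the hypothesis with $n = rk$, and take a $K_r$-tiling $T = \{Q_1, \dots, Q_t\}$ of maximum size. Writing $U := V(G) \setminus \bigcup_i V(Q_i)$ for the uncovered set, the goal is to show $U = \emptyset$. Suppose for contradiction that $t < k$, so $|U| \geq r$. The minimum-degree hypothesis forces $G[U]$ to be fairly dense, but maximality of $T$ prevents $G[U]$ from containing any $K_r$; hence I can fix an ``obstructing'' $r$-set $W \subseteq U$ that spans no clique, together with a specific missing edge or missing vertex inside $W$.

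The heart of the argument is to set up an auxiliary directed graph on $T$ that records \emph{swap operations}: call a clique $Q_j$ \emph{accessible} from a vertex $u \in W$ if some $v \in V(Q_j)$ satisfies $Q_j - v + u \cong K_r$, so that swapping replaces $u$ by $v$ in $U$. A chain of such swaps starting from some $u \in W$ and ending at a vertex $v$ for which $(W \setminus \{u\}) \cup \{v\}$ \emph{does} span a $K_r$ would produce, after performing the swaps in order, a strictly larger tiling, contradicting the maximality of $T$. Using $\delta(G) \geq (1 - 1/r)n$, every vertex has at most $n/r - 1$ non-neighbours, so each $u \in W$ is adjacent to all but at most $r-1$ vertices of each clique of $T$. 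A careful counting/pigeonhole argument on the accessibility digraph should then guarantee that augmenting chains exist unless $T$ together with $U$ exhibits a rigid extremal structure.

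The main obstacle is precisely this extremal case, where the swap digraph may fail to contain an augmenting chain because $G$ resembles a disjoint union of complete multipartite graphs whose parts are unbalanced modulo $r$. Handling it requires either a delicate classification of these near-extremal configurations (as in the original Hajnal--Szemerédi argument) or a local modification that reduces to a graph on fewer vertices so the inductive hypothesis can finish. An alternative route I would consider is to pass to the complement: $\overline{G}$ has $\Delta(\overline{G}) \leq n/r - 1$, and a $K_r$-factor in $G$ corresponds exactly to an equitable $(n/r)$-colouring of $\overline{G}$. This reformulation unlocks the Kierstead--Kostochka style exchange argument, which handles the extremal configurations through a cleaner ``A-B exchange'' recursion and seems to be the shortest path to the theorem.
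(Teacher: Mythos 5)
This statement is the Hajnal--Szemer\'edi theorem, which the paper does not prove: it is cited from the original sources and used as a black box (the paper even remarks that Kierstead and Kostochka later gave a short proof via equitable colorings). So there is no ``paper's own proof'' to compare against; what you have written is a plan for reproving a deep classical result, and it should be judged on its own.

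As it stands, the plan has a genuine gap at exactly the point where the theorem is hard. The first part correctly identifies the standard machinery: take a maximum $K_r$-tiling $T$, look at the uncovered set $U$, and try to build an augmenting chain of single-vertex swaps between cliques of $T$ and vertices of $U$. The degree bound $\delta(G)\ge(1-1/r)n$ does give that every $u\in U$ is non-adjacent to at most $n/r-1$ vertices, hence misses at most $r-1$ vertices of the tiled part; this is the right starting inequality. But the sentence ``A careful counting/pigeonhole argument on the accessibility digraph should then guarantee that augmenting chains exist unless $T$ together with $U$ exhibits a rigid extremal structure'' is precisely where the entire difficulty lives, and nothing in the sketch pins it down. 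The accessibility digraph can genuinely fail to contain augmenting chains even when $\delta(G)=(1-1/r)n$ holds with equality: the near-extremal configurations (unions of cliques of $T$ with $U$ that look like unbalanced complete multipartite graphs) are not a fringe case one can wave away; they are the reason the original proof of Hajnal and Szemer\'edi is long, and the reason Kierstead and Kostochka's simplification was considered a notable contribution. A correct proof must either (i) carry out the full case analysis of those configurations, showing in each one how to perform a compound exchange (not just a single chain) that frees a vertex and reduces $|U|$, or (ii) actually execute the Kierstead--Kostochka equitable-colouring recursion, which you only gesture at. Merely asserting that one of these ``seems to be the shortest path'' does not close the gap.

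A smaller but real issue: your opening move fixes an ``obstructing $r$-set $W\subseteq U$ that spans no clique'' together with ``a specific missing edge or missing vertex.'' When $|U|\ge r$ and $G[U]$ contains no $K_r$, such a $W$ exists, but the phrase ``missing vertex inside $W$'' is not well-defined and suggests the bookkeeping for the swap targets has not been thought through. Before any counting can proceed you need to decide whether you are trying to complete a $K_{r-1}$ in $U$ to a $K_r$, or to move a single vertex of $U$ into $T$, and the two set-ups lead to different accessibility digraphs. In short, the high-level outline is pointing in a legitimate direction, but it stops before the theorem is actually proved; if you want to include a self-contained proof you should instead follow the Kierstead--Kostochka argument explicitly rather than the sketchier exchange heuristic.
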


The minimum degree condition is best possible by considering slightly unbalanced complete $r$-partite graphs. Later, Kierstead and Kostochka \cite{Kierstead2008} gave a short proof of the Hajnal-Szemer\'edi theorem phrased in terms of equitable colorings. {For more recent results on equitable colorings, see \cite{cheng2025,kierstead-survey}.} 
Pham, Sah, Sawhney and Simkin~\cite{Pham2} proved a random sparsification version of Theorem~\ref{thm:HSz}.  
More precisely, they showed that given an $n$-vertex graph $G$ with $\delta(G)\ge \left(1-\frac{1}{r}\right)n$, 
if $p\ge Cn^{-2/r}(\log n)^{1/{r \choose 2}}$, then w.h.p.~$G(p)$ contains a $K_r$-factor.  
The case $r=3$ was established earlier by Allen, Böttcher, Corsten, Davies, Jenssen, Morris, Roberts and Skokan~\cite{Allen}. 
{Kelly, M\"uyesser and Pokrovskiy~\cite{Kelly2024}, and independently Joos, Lang and Sanhueza-Matamala~\cite{Joos-Lang2023},  extended this result to general $F$-factors in hypergraphs, determining the  {asymptotically} optimal threshold $p$ for which  $G(p)$ contains an $F$-factor with high probability. For further results on transversal robust versions of some classical theorems, we refer the reader to~\cite{Anastos-Chakraborti, Ferber2022, Han2025}. 
}

By analogy with Conjecture~\ref{conj:Hamilton cycle}, one may ask how many vertex subsets $S\subseteq V(G)$ have the property that $G[S]$ contains a $K_r$-factor.   Dragani\'{c}, Keevash and M\"{u}yesser~\cite{Keevash2025} proposed the following {conjecture regarding the Hajnal-Szemer\'{e}di theorem.} 
\begin{conjecture}[\cite{Keevash2025}]\label{conj:K_r factor}
For any $r\ge 2$ there is some constant $c>0$ so that if $G$ is an $((r-1)n+1)$-regular graph on $rn$ vertices, then at least $c2^{rn}$ subsets of $V(G)$ induce a $K_r$-factor.
\end{conjecture}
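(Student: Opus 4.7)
The plan is to show that a uniformly random subset $S \subseteq V(G)$, obtained by independently placing each vertex into $S$ with probability $\tfrac{1}{2}$, satisfies $\Pr[G[S] \text{ contains a } K_r\text{-factor}] \geq c$ for some absolute $c > 0$; since $\Pr[|S|\in r\mathbb{N}] = \Omega(1)$, this yields the required $c\cdot 2^{rn}$ subsets. A Chernoff calculation gives $|S| = rn/2 \pm O(\sqrt{n\log n})$ and $\deg_{G[S]}(v) = \tfrac{1}{2}((r-1)n+1) \pm O(\sqrt{n\log n})$ w.h.p. Thus for a typical $v\in S$, the quantity $\deg_{G[S]}(v) - (1-1/r)|S|$ has expectation only $O(1)$ but standard deviation $\Theta(\sqrt n)$; so $G[S]$ sits essentially \emph{at} the Hajnal--Szemer\'edi threshold, typically with $\Theta(n)$ vertices falling below it, and Theorem~\ref{thm:HSz} cannot be applied to $G[S]$ directly.

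I would proceed by a stability dichotomy. Fix a small $\varepsilon>0$ and call $G$ \emph{$\varepsilon$-extremal} if $V(G)$ admits a partition into $r$ parts $V_1,\dots,V_r$ of size $n$ each with at least $(1-\varepsilon)n^2$ edges in every bipartite cut $(V_i,V_j)$. An elementary double count using the exact edge count $|E(G)| = \binom{r}{2}n^2 + rn/2$ shows that such extremal $G$ carries close to $rn/2$ intra-class edges in total, so $G$ looks like $K_r(n)$ plus a near-perfect matching $M$ on $V(G)$. In the extremal case, the random $S$ satisfies $|S \cap V_i| = n/2 + O(\sqrt n)$ for each $i$, and I would produce a $K_r$-factor directly: the near-perfect matching restricted to each part provides pairs that can be used to ``merge'' excess vertices within the part, rebalancing the $O(\sqrt n)$-order discrepancies between parts, after which one vertex per part forms a $K_r$. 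A short parity calculation together with the matching structure yields success probability $\Omega(1)$.

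In the non-extremal case, classical stability arguments supply $\Omega(n)$-order slack in robust local invariants (for instance, many pairs $u,v$ with $|N_G(u)\cap N_G(v)| \geq (1-2/r)n + \Omega(n)$). I would then run the absorbing method \emph{inside} $S$: show that with constant probability $G[S]$ inherits non-extremality, construct via a standard absorbing-template argument a sublinear absorbing set $A \subseteq S$ such that $G[A \cup B]$ has a $K_r$-factor for any suitably small $B \subseteq S\setminus A$ of correct residue, use the inherited slack to find an almost-$K_r$-factor of $G[S\setminus A]$, and finally absorb the leftover via $A$.

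The main obstacle is the non-extremal case. Because $\delta(G)-(1-1/r)rn = 1$, the slack in $G$ is razor thin: every $o(n)$ loss in the bookkeeping must be paid for by either non-extremality or by the randomness of $S$. In particular, the standard absorption paradigm insists the absorbing set $A$ lie \emph{inside} $S$, but any pre-specified $A$ of size $\Omega(\log n)$ lies in $S$ with probability only $n^{-\Omega(1)}$, far too small. The crucial technical challenge is therefore to construct, with $\Omega(1)$ probability, an absorbing structure intrinsic to $G[S]$ rather than one pre-selected in $G$. I expect this to require a two-round exposure of $S$ --- first reveal a sparse portion to build the absorbing template, then reveal the rest to complete the factor --- exploiting both the non-extremality of $G$ and the rigid structure imposed by $((r-1)n+1)$-regularity.
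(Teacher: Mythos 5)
Your dichotomy is incomplete, and this is the central problem. You split into ``$G$ is close to $K_r(n)$'' versus ``$G$ has robust slack,'' but the paper's division is into ``$G$ contains a $\gamma$-independent set of size $n$'' versus ``$G$ contains none.'' These are not the same: a graph may contain, say, two disjoint sparse $n$-sets but fail to have any third, so it is not close to $K_r(n)$ in your sense, yet it has $2n$ vertices of essentially minimum degree pinned to a balanced bipartite structure, leaving no ``$\Omega(n)$-order slack in robust local invariants.'' Such graphs fall through the cracks of your plan: your extremal argument needs a full partition into $r$ sparse classes, and your non-extremal argument presupposes global slack that is not present. The paper spends most of its effort exactly on this intermediate regime --- it builds a \emph{good partition} $\{A_1,\dots,A_s,B\}$ for arbitrary $s\in[r]$, finds $K_{r-s}$-factors inside $B$ and contracts them, and only then invokes a multipartite Hajnal--Szemer\'edi statement. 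None of that appears in your outline.

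On the non-extremal case (truly no sparse $n$-set), you correctly flag that a pre-specified absorbing set lands inside $S$ with polynomially small probability, but your proposed two-round fix is unnecessary. The paper sidesteps absorbing entirely: it cites an existing theorem (Gan--Han--Hu, Theorem~\ref{thm:Non-extremal case r>3}) stating that any graph with $\delta\geq(r-1-\alpha)n$ and no $\gamma$-independent $n$-set contains a $K_r$-factor, then verifies via Chernoff and a Frieze--Kannan partition that $G[\tfrac12]$ inherits both hypotheses w.h.p. The absorbing, if any, lives inside that black-box theorem applied to $G[S]$, not to $G$.

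In your extremal case, the claim that $G$ must look like $K_r(n)$ plus a near-perfect matching is also too strong: even with $(1-\varepsilon)n^2$ edges in every cut, the intra-class excess can be $\Theta(\varepsilon n^2)$, not $O(n)$. More importantly, the ``short parity calculation together with the matching structure'' is where the real work lies. When the partition is perfectly balanced the paper needs Lemma~\ref{vertex-cover} (a non-trivial double-count using regularity) to guarantee that some part carries a matching of size $\Omega(\sqrt n)$, and it then conditions on $|A_i\cap S|$ falling in carefully chosen $\Theta(\sqrt{n})$-windows so that exactly one part becomes large with the right matching surplus. You would hit a genuine obstruction here: without a matching of size $\Omega(\sqrt n)$ inside some class, the $O(\sqrt n)$-order discrepancies cannot be paid for, and nothing in your sketch shows such a matching exists.
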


In the following, we make some remarks on this conjecture. 
\begin{enumerate}[label=(\arabic*)]
    \item The regularity condition $((r-1)n+1)$ cannot be lowered to $(r-1)n$.  
Indeed, consider the balanced complete $r$-partite graph $G:=K_{n,\ldots,n}$: an induced subgraph $G[S]$ contains a $K_r$-factor exactly when $S$ contains the same number of vertices from each part. Hence the number of such subsets $S$ is 

$$
\sum_{k=1}^n \binom{n}{k}^r \le n \cdot \binom{n}{\lfloor n/2 \rfloor}^r \leq n \left( \frac{2^n}{\sqrt{\pi n/2}} \right)^r =n\frac{2^{rn}}{(\pi n/2)^{r/2}}.
 $$
 \item The regularity assumption cannot be reduced to a minimum degree of $(r-1)n+1$. 
Consider the graph $G$ obtained from  the balanced complete $r$-partite graph $K_{n,\dots,n}$ (with vertex partition $A_1\cup \ldots \cup A_r$) by adding a spanning star  in each part.  
In $G$, a subset $S\subseteq V(G)$ induces a subgraph that contains a $K_r$-factor only when the size of 
each $|A_i\cap S|$ 
deviates from $\frac{|S|}{r}$ by less than  $r$\footnote{Note that each $G[A_i\cap S]$ has matching number at most one. Thus $|A_i\cap S|-\frac{|S|}{r}\leq 1$ for each $i\in [r]$, which means $\min\limits_{i\in[r]}|A_i\cap S|>\frac{|S|}{r}-r$.}. {Then the number of such subsets $S$ is at most
$$
\sum_{s_1=0}^n\sum_{\substack{|s_j-s_1|<2r\\j\in[2,r] }}\prod_{i=1}^r\binom{n}{s_i}\leq \sum_{s_1=0}^n\sum_{\substack{|s_j-s_1|<2r\\j\in[2,r] }}\binom{n}{\lfloor n/2 \rfloor}^r\leq (n+1)(4r)^{r-1}\Big( \frac{2^n}{\sqrt{\pi n/2}} \Big)^r, 
$$
where $s_i=|A_i\cap S|$ for each  $i\in [r]$. }

\end{enumerate}

Our main contribution is to resolve Conjecture~\ref{conj:K_r factor} for all sufficiently large $n$.

\begin{theorem}\label{thm:main thm}
For any $r\ge 2$, there is a constant $c>0$ such that the following holds for sufficiently large $n$. Let $G$ be an $((r-1)n+1)$-regular graph on $rn$ vertices. Then at least $c2^{rn}$ subsets of $V(G)$ induce a $K_r$-factor.
\end{theorem}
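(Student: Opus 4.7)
The plan is to show that a uniformly random subset $S\subseteq V(G)$ (each vertex included independently with probability $1/2$) satisfies $G[S]$ has a $K_r$-factor with probability at least an absolute constant $c'(r)>0$, which yields at least $c'(r)\cdot 2^{rn}$ such subsets. Two obvious necessary conditions are (i) $|S|\in r\N$, and (ii) $G[S]$ has enough edges to support a factor. Condition (i) holds with probability $\tfrac{1}{r}+o(1)$ by a roots-of-unity filter on $\mathrm{Bin}(rn,1/2)$; a Chernoff bound gives that with probability $1-o(1)$ one has $|S|=rn/2+O(\sqrt{n\log n})$ and $\deg_{G[S]}(v)=((r-1)n+1)/2\pm O(\sqrt{n\log n})$ for every $v\in S$, so $\delta(G[S])\ge(1-\tfrac{1}{r})|S|-O(\sqrt{n\log n})$. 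This lands just at the Hajnal-Szemer\'edi threshold, so a direct application of Theorem~\ref{thm:HSz} is insufficient and extra structural information must be used.

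I would split on a standard stability dichotomy with a small parameter $\eps>0$. In the \emph{non-extremal case}, where $G$ admits no partition $V(G)=V_1\cup\cdots\cup V_r$ with $\big||V_i|-n\big|\le\eps n$ and $e_G(V_i)\le\eps n^2$ for every $i$, I would invoke a stability version of the Hajnal-Szemer\'edi theorem: any $m$-vertex graph $H$ with $\delta(H)\ge(1-\tfrac{1}{r}-\eta)m$ either contains a $K_r$-factor, or is within edit distance $\eta'm^2$ of the balanced complete $r$-partite graph on $m$ vertices. Since the non-extremality of $G$ survives restriction to a random half of the vertices with probability $1-o(1)$ (by a union bound over candidate partitions of $S$ and extension to $V(G)$), the second alternative can be ruled out for typical $S$, leaving a $K_r$-factor w.h.p.

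In the \emph{extremal case}, $V(G)=V_1\cup\cdots\cup V_r$ is a near-partition, and the $((r-1)n+1)$-regularity forces each $V_i$ to carry substantial internal structure: at the exact extremum $K_{n,\ldots,n}$ plus one perfect matching per part every vertex has exactly one internal neighbour, and this essentially persists after discarding $o(n)$ exceptional vertices in the general near-extremal graph. With constant probability the random $S$ satisfies $s_i:=|S\cap V_i|=n/2+O(\sqrt n)$ for all $i$, with imbalances $\De_i:=s_i-\min_j s_j=O(\sqrt n)$. I would build a $K_r$-factor in $G[S]$ by mixing transversal copies (one vertex from each part) with \emph{repair} $K_r$'s of type $(i,j)$: two vertices of $V_i$ joined by an internal matching edge, no vertex of $V_j$, and one vertex from each of the remaining $r-2$ parts. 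Such repair copies convert a deficit in $V_j$ into a surplus in $V_i$, and solving the resulting linear system over the $k_{i,j}$ counts is feasible because the $\De_i$ are of order $\sqrt n$ while each $V_i\cap S$ retains $\Theta(n)$ internal matching edges w.h.p.

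The main obstacle is the extremal case: a naive count of \emph{perfectly} balanced subsets yields only $\Theta(2^{rn}/n^{(r-1)/2})$ witnesses, short by a polynomial factor, so the internal edges must be exploited quantitatively to absorb typical $O(\sqrt n)$ imbalances and recover a constant fraction of \emph{all} $2^{rn}$ subsets. Making this rigorous requires the repair construction to be robust to the residual defects of $G$ in the near-$r$-partite regime (vertices of abnormal internal degree, missing cross-edges, unbalanced $|V_i|$) and to couple the combinatorial $K_r$-packing argument with Chernoff-level concentration in an absorber-plus-stability framework, in the spirit of the $r=2$ argument of Dragani\'{c}-Keevash-M\"{u}yesser but with a Hall- or flow-type matching step replacing the direct matching-theoretic reasoning for $r\ge 3$.
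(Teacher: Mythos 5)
Your overall strategy — reduce to $\mathbb{P}\big[G[\tfrac12]\text{ has a }K_r\text{-factor}\big]\ge c$, split into a non-extremal and an extremal case, and rebalance a near-$r$-partite host using ``repair'' cliques built from internal matching edges — is the same shape as the paper's argument (the repair cliques with index vector $\mathbf{1}_r-\mathbf{u}_i+\mathbf{u}_\ell$ are exactly those used in Lemma~\ref{lem:balance}). However, there are several genuine gaps.

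First, your dichotomy is too coarse. You assume the extremal structure is a near-partition of \emph{all} of $V(G)$ into $r$ sparse classes. The paper's extremal case (Lemma~\ref{lem:construct good-partition}) instead produces a partition $\{A_1,\dots,A_s,B\}$ where only $s\in[r]$ classes are sparse and the residual $B$ (of size roughly $(r-s)n$) is dense. All intermediate cases $1\le s<r$ must be handled, and they require an extra contraction step — finding a $\{K_{r-s},K_{r-s+1}\}$-factor of $B\cap S$ and collapsing each copy — before any multipartite Hajnal--Szemer\'edi argument can be invoked. Your proposal silently assumes $s=r$.

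Second, the claim that ``each $V_i\cap S$ retains $\Theta(n)$ internal matching edges w.h.p.'' is false. In the balanced sub-case, $\delta(G)=(r-1)n+1$ forces only $\delta(G[V_i])\ge 1$, which is compatible with $G[V_i]$ being a star and hence having matching number $1$. The strongest statement one can squeeze out of regularity (Lemma~\ref{vertex-cover}) is that \emph{some} part has a minimum vertex cover of size $\ge\sqrt{n}/r$, so a matching of size $\Omega(\sqrt{n})$ in one part only. Because the typical per-part imbalance $|A_i\cap S|-|S|/r$ is also $\Theta(\sqrt n)$, the argument only works after conditioning on a positive-probability event that forces \emph{exactly one} part (the one with the matching) to exceed $|S|/r$, and the excess to lie in the right $\Theta(\sqrt{n})$ window; this is the content of Cases~1--3 in the proof of Theorem~\ref{lem:extremal-case}, and it cannot be replaced by a w.h.p.\ assertion.

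Third, in the non-extremal case your ``union bound over candidate partitions of $S$'' cannot work as stated: there are exponentially many vertex partitions of $S$, so the union bound against the polynomial Chernoff failure probability is vacuous. The paper sidesteps this via the Frieze--Kannan regularity lemma (Lemma~\ref{partition}), which reduces the sparse-set test to a bounded number of edge densities that do concentrate. You would need an equivalent mechanism.
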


In fact, we prove that the constant {$c$ can be taken as  $\frac{1}{(40r^2)^r}$}. It is worth noting that the case $r=2$ (perfect matchings) of Conjecture~\ref{conj:K_r factor} follows from the result of Dragani\'{c}, Keevash and M\"{u}yesser~\cite{Keevash2025} concerning Hamilton cycles, together with the fact that a required subset must have even size, which occurs with probability $\frac{1}{2}$.
Let $G[p]$ be a random induced subgraph of $G$ with each vertex kept independently with probability $p$, where $p\in (0,1)$. 
Consequently, Theorem~\ref{thm:main thm} can be reduced to studying the probability that \(G[\frac{1}{2}]\) contains a \(K_r\)-factor. 

\begin{theorem}\label{thm:the random version of main thm}
For any $r\ge 3$, let $G$ be an $((r-1)n+1)$-regular graph on $rn$ vertices with $n$ sufficiently large. 
Then $\mathbb{P}\big[G[\frac{1}{2}]~\text{admits a}~K_r\text{-factor}\big]\ge \frac{1}{(40r^2)^r}$.
\end{theorem}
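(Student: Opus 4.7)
The plan is to prove Theorem~\ref{thm:the random version of main thm} via a stability dichotomy. Fix a small constant $\eta=\eta(r)>0$. Call $G$ \emph{extremal} if there is a partition $V(G)=A_1\sqcup\cdots\sqcup A_r$ with $|A_i|=n$ and $|E(G[A_i])|\le\eta n^2$ for every $i$ (so $G$ is close in edit distance to $K_{n,\ldots,n}$), and \emph{non-extremal} otherwise. Let $S$ denote the random sample, in which every $v\in V(G)$ is retained independently with probability $1/2$; the goal is a constant lower bound on $\mathbb{P}[G[S]\text{ contains a }K_r\text{-factor}]$ in each case.

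\textbf{Non-extremal case.} I would apply a robust Hajnal--Szemer\'edi-type argument. The non-extremality of $G$ supplies supersaturation: every large enough subset of $V(G)$ contains many copies of $K_r$ in highly flexible configurations. Using this, I would construct a deterministic linear-sized absorber $U\subseteq V(G)$ with the property that for every sufficiently small and balanced leftover $L\subseteq V(G)\setminus U$, $G[U\cup L]$ admits a $K_r$-factor. Then I would condition on the $\Omega(1)$-probability event that $|S|$ is divisible by $r$ and within $O(\sqrt n)$ of $rn/2$, and show that with constant probability $U\cap S$ retains its absorbing capacity while $G[(V(G)\setminus U)\cap S]$ admits an almost-$K_r$-tiling whose residual fits into $U\cap S$.

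\textbf{Extremal case.} The partition $A_1,\ldots,A_r$ is nearly $K_{n,\ldots,n}$. The crucial input from $((r-1)n+1)$-regularity is that every $v\in A_i$ has at least one neighbour inside $A_i$, forcing at least $n/2$ internal edges in each part. Writing $s_i=|A_i\cap S|$, any $K_r$-factor of $G[S]$ must realise the imbalance vector $(s_1-|S|/r,\ldots,s_r-|S|/r)$; each $K_r$ that uses two vertices of $A_i$ through an internal edge shifts this vector by $+1$ at coordinate $i$ and $-1$ at some $j$. I would then show that with probability $\Omega(1)$ the random sample makes the imbalance vector have $\ell^\infty$-norm $O(\sqrt n)$ while each $G[A_i\cap S]$ carries an internal matching large enough to absorb it; the residual cross-tiling in the near-complete $r$-partite structure is then a Hall-type problem on an almost-balanced $r$-partite host.

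\textbf{Main obstacle.} The extremal case is the hardest step: in $K_{n,\ldots,n}$ alone the probability of perfect $(s_i)$-balance is only $\Theta(n^{-(r-1)/2})\to 0$ for $r\ge 3$, so the ``$+1$'' in the degree condition must be leveraged to boost this into an absolute constant. This requires a fine structural classification of near-extremal $G$ and, in particular, a way to rule out (or to handle separately) pathological internal structures such as a high-degree star inside some $A_i$, where the internal matching number could be as small as $O(1)$. I expect the regularity to force matching number $\Omega(n)$ inside each $G[A_i]$ in all but a few very rigid sub-cases, which can then be resolved by a direct count.
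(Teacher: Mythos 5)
Your high-level architecture (stability dichotomy, balance vector analysis, using the ``$+1$'' in the regularity to force internal edges) matches the paper's, and you correctly identify the main difficulty. However, there are two genuine gaps.

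\textbf{The dichotomy criterion is too coarse.} You declare $G$ extremal only when there is a full $r$-way partition into sparse parts of size $n$. The paper instead splits on whether $G$ contains \emph{any single} $\gamma$-independent $n$-set, and in the extremal branch builds a partition $\{A_1,\ldots,A_s,B\}$ with $s\le r$ sparse parts and a dense remainder $B$ containing no sparse set of size $|B|/(r-s)$. This matters: a graph can have exactly one sparse $n$-set $A_1$ (say $G\approx K_{n,2n}$ for $r=3$). Under your criterion this $G$ is ``non-extremal'', but your absorber/supersaturation argument cannot absorb the divisibility constraint it imposes --- every $K_r$ in $G$ uses roughly one vertex from $A_1$, so $G[S]$ needs $|A_1\cap S|\le |S|/r$, an event of only constant probability that is not an artefact your absorber can buy back. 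The paper's non-extremal branch genuinely assumes \emph{no} sparse $n$-set exists, precisely so that Theorem~\ref{thm:Non-extremal case r>3} applies and no such hidden bottleneck remains.

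\textbf{The extremal case obstacle is identified but not resolved.} You note the core problem --- $\Theta(n^{-(r-1)/2})$ probability of perfect balance, and parts whose internal graph is a star have matching number $O(1)$ --- and say you ``expect regularity to force matching number $\Omega(n)$ in each part''. This is false: the star construction (balanced complete $r$-partite plus a spanning star in each $A_i$) shows some parts can have $O(1)$ matching while still being $((r-1)n+1)$-regular and near-extremal. The paper's fix is Lemma~\ref{vertex-cover}: for a \emph{balanced} partition $\{A_1,\ldots,A_r\}$, regularity forces at least \emph{one} part to have vertex cover (hence matching) of size $\Omega(\sqrt n)$; then the random-sample event is engineered (via Lemma~\ref{lem-int}, conditioning $|A_1\cap S|$ to land in a narrow band above the mean and all other $|A_i\cap S|$ to land in narrow bands around the mean) so that only \emph{that} part is large, and its $\Omega(\sqrt n)$ matching absorbs the $O(\sqrt n)$ surplus. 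Asking every part to carry an $\Omega(n)$ matching is both unnecessary and unavailable; the subtlety is making the one part that can pay, the only part that has to.
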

{Indeed, the result of Theorem~\ref{thm:the random version of main thm} can be extended to a more general probability $p\in (0,1)$. 
Following the same  proof of Theorem~\ref{thm:the random version of main thm}, one may show that $G[p]$ contains a $K_r$-factor with probability at least $(\frac{p^2}{20r^2})^r$.
}

\subsection{Notation}
For a graph $G$, let $V(G)$ and $E(G)$ denote the vertex set and edge set of $G$, respectively, and denote $|G|=|V(G)|$ and $e(G)=|E(G)|$. For a vertex $v\in V(G)$, let $N_G(v)$ be the neighborhood of $v$ in $G$. For two vertex subsets $U,U'\subseteq V(G)$, let $N_G(v,U)=N_G(v)\cap U$ and $N_G(U',U)=\bigcap_{v\in U'}N_G(v,U)$. We let \(d_{G}(v)=|N_{G}(v)|\) denote the degree of \(v\) and write \(d_{G}(v,U)=|N_{G}(v,U)|\). 
As we do elsewhere, we omit \(G\) from the subscript whenever there is no risk of confusion. The minimum and maximum degree of \(G\) is denoted by \(\delta(G)\) and \(\Delta(G)\) respectively.

For a subset \(U\subseteq V(G)\), let \(G[U]\) denote the graph induced by \(U\). Let \(G-U\) be the graph induced by \(V(G)\setminus U\). 
Given a subset {\(U'\subset V(G)\setminus U\)}, let \(G[U,U']\) denote the bipartite graph with bipartition \(U\cup U'\) and edges of the form \(uu'\in E(G)\) with \(u\in U\) and \(u'\in U'\). Denote $e(U,U')=|E(G[U,U'])|$. 

For any integers $a\leq b$, define $[a,b]:=\{i\in \mathbb{N}:a\leq i\leq b\}$ and $[b]:=[1,b]$. Given real numbers $a,b,c$, we write $a=b\pm c$ if it holds that $b-c\leq a\leq b+c$. When we write  $\alpha\ll \beta\ll \gamma$, we always mean that $\alpha, \beta, \gamma$  are constants in $(0,1)$; and $\alpha\ll \beta$ means that there exists $\alpha_0=\alpha_0(\beta)$ such that the subsequent arguments hold for all $0<\alpha\leq \alpha_0$. Hierarchies of other lengths are defined analogously. 


\section{Overview}
Let $G$ be an $((r-1)n+1)$-regular graph on $rn$ vertices. Given $\gamma>0$ and a subset $S\subseteq V(G)$, we say $S$ is a $\gamma$-\textit{independent set} in $G$ if $e(G[S])\leq \gamma n^2$.  We divide the proof of Theorem~\ref{thm:the random version of main thm} into two parts according to whether $G$ contains a $\gamma$-independent set of size $n$ (extremal case) or not (non-extremal case). 

\begin{theorem}[Non-extremal case]\label{thm:Non-extremal case}
Suppose that $r\geq 3, n\in \mathbb{N}$ and $\frac{1}{n}\ll \gamma \ll \frac{1}{r}$. Let $G$ be an $((r-1)n+1)$-regular graph on $rn$ vertices. If $G$ contains no $\gamma$-independent set of size $n$, then $G[\frac{1}{2}]$ admits a $K_r$-factor with probability at least $\frac{1}{r}-o(1)$.
\end{theorem}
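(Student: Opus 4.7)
The plan is to derive the bound by establishing the stronger statement that, conditional on the divisibility event $E = \{|V(G[\tfrac{1}{2}])| \equiv 0 \pmod{r}\}$, the graph $G[\tfrac{1}{2}]$ contains a $K_r$-factor with probability $1 - o(1)$. Combined with the local central limit estimate $\Pr[E] = \tfrac{1}{r} + o(1)$ for $\mathrm{Bin}(rn, 1/2)$, this yields the desired bound $\tfrac{1}{r} - o(1)$.

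The main technical tool is the \textbf{absorbing method}. First I would use the non-extremal condition, together with the regularity $\delta(G)=(r-1)n+1$, to establish a supersaturation statement: every linear-sized vertex subset of $G$ spans $\Omega(n^r)$ copies of $K_r$. Combined with a random selection argument, this yields a distributed absorber $A \subseteq V(G)$ of size $\alpha n$ (with $\alpha=\alpha(r,\gamma)$ a small constant) such that each vertex $v \in V(G)\setminus A$ admits $\Omega(n^{r^2-1})$ R\"odl--Ruci\'nski--Szemer\'edi-type absorbing gadgets inside $A$; each gadget $L_v$ has size $r^2-1$ and has the property that both $G[L_v]$ and $G[L_v\cup\{v\}]$ carry $K_r$-factors.

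Next I would sample $S=V(G[\tfrac{1}{2}])$ and set $A'=A\cap S$. A Chernoff plus union bound argument shows that, with probability $1-o(1)$, every vertex $v\in V(G)\setminus A$ retains $\Omega(n^{r^2-1})$ surviving gadgets inside $A'$, so $A'$ remains an absorber capable of swallowing any set $W\subseteq S\setminus A'$ with $|W|\le \beta n$, subject to $|A'\cup W|\equiv 0\pmod r$. I would then produce an almost $K_r$-factor in $G[S\setminus A']$ covering all but $o(n)$ vertices. By concentration, the set $L$ of vertices $v\in S$ with $d_{G[S]}(v) < (r-1)|S|/r$ has size $O(\sqrt{n\log n})$ with high probability; after moving $L$ into the leftover, the remaining bulk has minimum degree above the Hajnal--Szemer\'edi threshold, and a nibble-type or Koml\'os-style argument, fed by the supersaturation inherited from the non-extremal condition in $G[S]$, produces an almost factor. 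Finally the absorber $A'$ swallows the $o(n)$ leftover, yielding a $K_r$-factor of $G[S]$.

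The main obstacle is carrying the almost-factor and the absorber simultaneously under random sampling: the minimum degree of $G[S]$ lies essentially on the Hajnal--Szemer\'edi threshold $(1-\tfrac{1}{r})|S|$ with fluctuations of order $\sqrt{n\log n}$, leaving essentially no slack. The non-extremal assumption therefore has to be exploited twice --- first to supply enough $K_r$-copies inside every linear-sized set (making the absorber robust against sampling), and second to ensure that the random subsample $G[S]$ itself remains far from the extremal Hajnal--Szemer\'edi configuration, so that an approximate $K_r$-tiling of the bulk can be extracted in spite of the degree tightness. Coordinating these two uses of the hypothesis is the technical heart of the argument.
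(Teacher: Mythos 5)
Your route is genuinely different from the paper's, and the paper's is both much shorter and cleaner. The paper does \emph{not} run a fresh absorption argument inside the random subsample. Instead it observes that the deterministic statement it needs is already a black box: the Gan--Han--Hu theorem (Theorem~\ref{thm:Non-extremal case r>3}) says that an $rn$-vertex graph with $\delta\ge(r-1-\alpha)n$ and no $\gamma$-independent $n$-set has a $K_r$-factor. The whole proof then reduces to two sampling facts: (i) Chernoff gives $\delta(G[S])\ge\frac{r-1}{2}n-n^{0.6}$ w.h.p., and (ii) the no-sparse-set hypothesis on $G$ transfers to $G[S]$ via a single application of the Frieze--Kannan weak regularity lemma (Lemma~\ref{partition}, used in Claim~\ref{no-independent}). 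That transfer step is the only real content of the section, and it is the piece your sketch glosses over with the phrase ``the supersaturation inherited from the non-extremal condition in $G[S]$''. If you want to complete your argument, you will in any case need to prove (ii), and once you have it you might as well cite Gan--Han--Hu and be done.

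Beyond the inefficiency, your sketch contains two concrete errors. First, the supersaturation claim ``every linear-sized vertex subset of $G$ spans $\Omega(n^r)$ copies of $K_r$'' is false under the given hypotheses: the regularity only forces $\alpha(G)\le n-1$, so a set of size $\epsilon n$ (for any fixed $\epsilon<1$) may be entirely independent, hence contain no $K_r$ at all, without violating the no-$\gamma$-independent-$n$-set assumption, which constrains only sets of size exactly $n$. You would need to restrict such a supersaturation statement to sets of size close to $n$ (or larger) and feed in a minimum-degree hypothesis on the induced subgraph, as in Lemma~\ref{SuperT}. Second, your per-vertex gadget cannot have size $r^2-1$: for both $G[L_v]$ and $G[L_v\cup\{v\}]$ to admit $K_r$-factors you would need $r\mid|L_v|$ and $r\mid(|L_v|+1)$ simultaneously, which is impossible. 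The standard fix is to absorb $r$ vertices at a time (or use a flexible ``switcher'' absorbing two vertices via a common link), but as written the gadget specification is contradictory. Finally, the claim that a ``nibble-type or Koml\'os-style argument'' produces an almost factor when $\delta(G[S])=(1-\frac1r)|S|-\Theta(\sqrt{n\log n})$ needs justification; ordinary almost-tiling lemmas require the stronger hypothesis $\delta\ge(1-\frac1r+\eta)|S|$, so you would again have to lean on the transferred no-sparse-set condition rather than degree alone --- which circles back to the one lemma the paper actually proves.
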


Note that an essentially necessary condition for the existence of a $K_r$-factor in a graph is that the order of the graph is divisible by $r$. 
It follows that $\mathbb{P}\big[G[\frac{1}{2}]~\text{admits a}~K_r\text{-factor}\big]\le \frac{1}{r}$. A result of Gan, Han and Hu~\cite{gan2024} (see Theorem~\ref{thm:Non-extremal case r>3}) shows that under a weaker degree condition $\delta(G)\geq (r-1)n - o(n)$, if $G$ contains no $\gamma$-independent set, then $G$ admits a $K_r$-factor. The non-extremal case essentially follows from this theorem because, with high probability, the required minimum degree and the non-existence of sparse sets would be inherited in $G[\frac{1}{2}]$. The major task is to deal with the extremal case as follows.

\begin{theorem}[Extremal case]\label{lem:extremal-case}
Suppose that $r\geq 3$, $n\in \mathbb{N}$   and $\frac{1}{n}\ll \gamma \ll \frac{1}{r}$. Let $G$ be an $((r-1)n+1)$-regular graph on $rn$ vertices which contains a $\gamma$-independent set of size $n$. Then  $G[\frac{1}{2}]$ admits a $K_r$-factor with probability at least $\frac{1}{(40r^2)^r}$.
\end{theorem}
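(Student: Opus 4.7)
The plan is to exploit the extremality hypothesis to uncover a near-balanced $r$-partite structure on $G$, and then to combine a transversal $K_r$-tiling with an absorption step using intra-part edges to build a $K_r$-factor in the random sample $G[\tfrac12]$.

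\textbf{Step 1 (Partition).} Set $V_1 := I$ and $B := V(G) \setminus I$, so $|B| = (r-1)n$. Since $G$ is $((r-1)n+1)$-regular, each vertex has at most $n-2$ non-neighbours in $G$; in particular $\delta(G[B]) \ge (r-2)n + 1$, meeting the Hajnal--Szemer\'edi threshold for a $K_{r-1}$-factor on $B$. Moreover, every independent set of $G$ has size at most $n-1$, so by a stability / iterative peeling argument applied to $G[B]$ I extract a partition $V(G) = V_1 \cup V_2 \cup \cdots \cup V_r$ with $|V_i| = n$, $e(G[V_i]) \le \gamma' n^2$, and every bipartite subgraph $G[V_i, V_j]$ missing only $O_r(\gamma' n^2)$ edges. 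Since $d_G(v) > |V(G) \setminus V_i|$ for every $v \in V_i$, each vertex has at least one neighbour inside its own part, and a short counting argument shows that all but $O_r(\gamma' n)$ vertices of $V_i$ have intra-part degree $o(n)$.

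\textbf{Step 2 (The good event).} Let $S \sim G[\tfrac12]$ and $S_i := S \cap V_i$, so the $|S_i|$ are independent $\mathrm{Bin}(n, 1/2)$. Define the good event $\mathcal{E}$ to be the conjunction of: (i) $\sum_i |S_i| \equiv 0 \pmod r$; (ii) every pairwise discrepancy $\bigl||S_i| - |S_j|\bigr|$ is at most $M\sqrt n$; and (iii) for each $i$, the sampled intra-part graph $G[S_i]$ contains a matching $M_i$ of size at least $M\sqrt n$ whose edges avoid the ``high intra-degree'' vertices. By a local central limit theorem, (i) and (ii) each have probability bounded below by a universal constant, and (iii) follows from the intra-part edge count (at least $n/2$ edges inside $V_i$) restricted to the low-degree majority, together with a standard second-moment argument. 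Combining the per-part probabilities via independence across parts yields $\mathbb{P}[\mathcal{E}] \ge 1/(40r^2)^r$.

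\textbf{Step 3 (Factor construction).} Conditional on $\mathcal{E}$, construct the factor as follows. While some $b_i := |S_i| - |S|/r > 0$, pick any $j$ with $b_j < 0$, an unused edge $uv \in M_i$, and extend $uv$ to a $K_r$ by choosing, for each $k \notin \{i,j\}$, one vertex from $N(u) \cap N(v) \cap V_k$. Each such absorbing $K_r$ reduces $b_i$ by $1$ and $|b_j|$ by $1$, leaving the other $b_k$ unchanged; feasibility follows from $|N(u) \cap N(v) \cap V_k| \ge (1 - O_r(\gamma'))n$ by near-completeness of the inter-part graphs. After at most $O_r(\sqrt n)$ steps all $b_k = 0$, leaving leftover sets $S'_1, \ldots, S'_r$ of equal size $m$. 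Finally, a classical $K_r$-factor argument (for instance, iterated Hall's theorem on the near-complete $r$-partite graph restricted to $\bigcup_i S'_i$) produces a transversal $K_r$-factor, which together with the absorbing cliques gives the desired $K_r$-factor of $G[S]$.

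\textbf{Main obstacle.} The hardest part is condition (iii) of $\mathcal{E}$: ensuring, with constant probability per part, an intra-part matching whose edges admit pairwise vertex-disjoint extensions to $K_r$. Two issues must be controlled simultaneously: (a) finding a sufficiently large matching in the sparse random subgraph $G[S_i]$ of $G[V_i]$ (which may have as few as $n/2$ edges, potentially concentrated on a few high-intra-degree vertices); and (b) coordinating across parts so that all extensions use pairwise disjoint common neighbourhoods without depleting the inter-part density required for the transversal tiling. Resolving both requires a careful structural refinement of the partition in Step 1 together with a multi-round exposure argument, and is exactly what forces the explicit $1/(40r^2)^r$ factor in place of the near-optimal $1/r - o(1)$ of the non-extremal case.
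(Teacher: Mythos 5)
Your Step 1 contains the central gap, and Step 2(iii) contains a second one; together they make the plan unworkable without a structurally different argument, which is essentially what the paper does.

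\textbf{On Step 1.} The hypothesis only gives a single $\gamma$-independent set $I$ of size $n$. It does \emph{not} follow that $V(G)$ can be partitioned into $r$ sparse $n$-sets $V_1,\ldots,V_r$. After removing $I$, the remainder $B=V(G)\setminus I$ has $\delta(G[B])\ge (r-2)n+1$, but minimum-degree information alone says nothing about near-$(r-1)$-partiteness: $G[B]$ could be pseudorandom and contain no sparse $n$-set whatsoever. Turán/Hajnal–Szemerédi stability lets you peel off a further sparse part only while the current remainder still admits one; in general you can only extract $s$ sparse $n$-sets for some $s\in[r]$, and the leftover ``dense'' block $B$ of size $(r-s)n$ must be handled separately. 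The paper's whole architecture (the partition $\{A_1,\ldots,A_s,B\}$, the contraction of a $K_{r-s}$-factor in $B$ to a single part $B^*$, and the multipartite Hajnal–Szemerédi step on the auxiliary $(s+1)$-partite graph) exists precisely because $s=r$ cannot be assumed.

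\textbf{On Step 2(iii).} Even granting $s=r$, demanding that \emph{every} $G[S_i]$ contain a matching of size $\Theta(\sqrt n)$ fails. The only guarantee from regularity is $e(G[V_i])\ge n/2$, and those edges can be a star, giving matching number $O(1)$ deterministically — so your event $\mathcal{E}$ can have probability zero. The paper gets around this with Lemma~\ref{vertex-cover}: for an exactly balanced partition, at least \emph{one} part has minimum vertex cover $\ge \sqrt n/r$, hence a $\Theta(\sqrt n)$ matching, and the conditioning events $\mathbf{E}_1,\ldots,\mathbf{E}_{s+1}$ are carefully chosen so that with positive probability the \emph{only} part allowed to exceed $|S|/r$ is the one known to carry a large matching. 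Your ``absorbing edge'' arithmetic in Step 3 is fine locally, but without this coordination you cannot guarantee the large part has enough intra-part matching edges to donate, which is exactly condition~\ref{B2} of the paper's good-partition notion.

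In short, the proposal assumes a global $r$-partition into sparse sets and uniform intra-part matchings, neither of which hold; the actual proof must allow $s<r$, process the dense residue via a contraction/auxiliary-graph argument, and steer the random balance so that the single part with a guaranteed large matching is the one that overshoots.
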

In the extremal case, the existence of a $\gamma$-independent $n$-set in $G$ allows us to construct a partition $\mathcal{P}^0 = \{A_1^0, \dotsc, A_s^0, B^0\}$ of $V(G)$ for some $s\in [r]$, which is simply obtained by iteratively choosing disjoint $\gamma$-independent $n$-sets $A_i^0$ so that $B^0$ contains no $\gamma$-independent set {of size $n$}. 
By suitably reassigning some vertices, $\mathcal{P}^0$ can be refined into a \textit{good partition} of $G$ (see Definition~\ref{def:good partition}).  We first define the following three types of vertices with respect to a partition of $V(G)$.

\begin{definition}\label{def:bad vertex for Ai}
Let $r,t,n\in \mathbb{N}$ and $\alpha>0$. Suppose that $G$ is a graph on $rn$ vertices and   $\mathcal{P}=\{V_1,\ldots,V_t\}$ is a partition of $V(G)$. For each $i\in [t]$, we say that 
\begin{itemize}
    \item a vertex $v\in V_i$ is $(\alpha, V_i)$-good if $d(v,V_i)\le \alpha n$;
    \item a vertex $v\in V(G)\setminus V_i$ is $(\alpha, V_i)$-bad if $d(v,V_i)\le \alpha n$;
    \item a vertex $v\in V_i$ is $(\alpha,V_i)$-exceptional if $d(v,V_j)\le |V_j|-\alpha n$ for some $j\in [t]\setminus \{i\}$.
\end{itemize} 
\end{definition}
We now introduce the following definition of our desired partition.
\begin{definition}[Good partition]\label{def:good partition}
For $r,s,n\in \mathbb{N}$ with $s\le r$, and positive constants $\alpha, \beta,\beta', \gamma$, suppose that $G$ is a graph on $rn$ vertices and $\mathcal{P}=\{A_1,\ldots,A_s,B\}$ is a partition of $V(G)$. We say that $\mathcal{P}$ is an \text{$(\alpha,\beta,\beta',\gamma)$-good partition} of $G$ if all of the following hold: 
    \begin{enumerate}
[label =\rm  (A\arabic{enumi})]
    \item\label{B1} $ |A_i|\leq n+\alpha n$ for each $i\in [s]$, and $(r-s)n-r\alpha n\leq|B|\leq (r-s)n+r\alpha n$; 
    \item\label{B2}  if $|A_i|>n$, then $\Delta(G[A_i])\leq \beta' n$ and the matching number of $G[A_i]$ is at least ${|A_i|-n+r}$; if $|A_i|=n$, then $G[A_i]$ is not empty;
    \item\label{B03} the number of $(\alpha^{1/5},A_i)$-good vertices is at least $|A_i|-2\alpha n$ for each $i\in[s]$; 
    \item\label{B04} for each $i\in [s]$ and each vertex $v\in V(G)\setminus A_i$, $d(v,A_i)\geq \beta n$;
    \item\label{B05} $d(v,B)\geq \beta n$ for each vertex $v\in V(G)\setminus B$ and $\de(G[B])\ge (r-s-1)n-r\alpha n$;
    \item\label{B06} the number of $(\alpha^{1/5},B)$-exceptional vertices is at most $r\alpha n$;

    \item\label{B07} 
    $B$ has no $\gamma$-independent set of size at least $\frac{|B|}{r-s}$.
\end{enumerate}
\end{definition}

Lemma~\ref{lem:construct good-partition} tells us that $G$ admits a good partition, say  $\mathcal{P}=\{A_1,\ldots,A_s,B\}$. Let $S\subseteq V(G)$ be the random subset formed by including each vertex of $V(G)$ independently with probability $\frac{1}{2}$, and let $\mathcal{P}'=\{A_1\cap S,\ldots,A_s\cap S,B\cap S\}$.
{We call $A_i\cap S$ a \textit{large} part of $\mathcal{P}'$ if $|A_i\cap S|>\frac{|S|}{r}$ and $B\cap S$ a \textit{large} part if $|B\cap S|>\frac{r-s}{r}|S|$. Otherwise the part is \textit{small}.}
Our proof proceeds in the following two steps.

\medskip
{\bf Step 1.} Prove that $\mathcal{P}'$ is a good partition of $G[S]$ with probability at least $\frac{1}{(40r^2)^r}$.
\medskip



It suffices to show that $\mathcal{P}'$ satisfies~\ref{B1}-\ref{B07}. By Chernoff bound and concentration, conditions~\ref{B03}-\ref{B07} hold with high probability.
We therefore turn to the verification of~\ref{B2}, that is, every large part $A_i\cap S$ of $\mathcal{P}'$ contains a matching of size at least $|A_i\cap S|-\frac{|S|}{r}+r$. The strategy here is to restrict the size of each part  to a certain interval (make sure this happens with a constant probability), so that {$\mathcal{P}'$ satisfies~\ref{B1} and} any possible \( A_i \) with insufficient matching edges in \( G \) would inevitably turn into a small part of \(\mathcal{P}'\).

\medskip
{\bf Step 2.} Prove that $G':=G[S]$ has a  $K_r$-factor if $G'$ admits a good partition (see Lemma~\ref{lem:covering bad vertices for balance}).\medskip

In doing this, we employ Lemma~\ref{lem:balance} and for that we shall do some preliminary cleaning and contraction to leave behind an auxiliary graph together with an almost balanced partition satisfying~\ref{A1}-\ref{A2}, to which we then apply Lemma~\ref{lem:balance}. The proof of Lemma~\ref{lem:balance} is, in turn, based on a multipartite Hajnal-Szemer\'{e}di theorem (see Lemma~\ref{lem:HS thm in multipartite graph} and we will elaborate more details in Section~5).
{To be more specific, building on a good partition $\mathcal{P}'$, we construct an auxiliary graph satisfying~\ref{A1}-\ref{A2} in  the following three steps.}
\begin{itemize}
    \item {\bf  Cleaning bad or exceptional vertices:} We first clean all vertices failing the degree condition of~\ref{A3} (i.e., bad or exceptional vertices) by using vertex-disjoint copies of $K_r$. 
    Note that there are only $o(n)$ such vertices, far fewer than the number of good neighbors that each such vertex has in every $A_i\cap S$. Moreover,~\ref{B05} ensures the existence of linearly many vertex-disjoint copies of $K_{r-s}$ in $G'[B\cap S]$. 
    Thus, one may find a  $K_r$-tiling $\mathcal{K}$ covering all bad or exceptional vertices.
Furthermore, this tiling $\mathcal{K}$ can be chosen such that $(r-s)|V(\mathcal{K})\cap A_i|=|V(\mathcal{K})\cap B|$ for each $i\in [s]$.

\item {\bf  Fixing divisibility and balancing:} Let $G'':=G'-V(\mathcal{K})$, $A_i':=(A_i\cap S)\setminus V(\mathcal{K})$ for each $i\in [s]$ and $B':=(B\cap S)\setminus V(\mathcal{K})$.
The worst-case scenario is when the size of $B'$ is not divisible by $r-s$ or too small (i.e $\frac{|B'|}{r-s}<\frac{|G''|}{r}$). To overcome this, we will separately pick up two distinct collections of $r$-cliques, denoted as  $\mathcal{H}$ and $\mathcal{R}$, 
such that each $K_r$ in $\mathcal{H}$  uses $r-s+1$ vertices from \(B'\); every \(K_r\) in $\mathcal{R}$ instead uses \(r-s-1\) vertices from \(B'\) and a matching edge from some large part $A_i'$. In this way, we can ensure that $(r-s)\mid |B'\setminus V(\mathcal{H}\cup\mathcal{R})|$ and moreover $b:=\frac{|B'\setminus V(\mathcal{H}\cup\mathcal{R})|}{r-s}- \frac{|G''-(\mathcal{H}\cup\mathcal{R})|}{r}\ge 0$. In doing this, we need to find a large number of disjoint copies of $K_{r-s+1}$ or $K_{r-s-1}$, and this follows from~\ref{B05} and the fact that $G'[B\cap S]$ contains no $o(1)$-independent set of size at least $\frac{|B\cap S|}{r-s}$. In fact, a supersaturation version of stability  result (see Lemma~\ref{SuperT}) implies that $G'[B\cap S]$ actually contains {$\Omega(n^{r-s+1})$ distinct} copies of $K_{r-s+1}$. Moreover, \ref{B2} ensures that each ${G'}[A_i\cap S]$ contains a matching of the required size.

    \item {\bf Contraction:} {
    To obtain an almost balanced partition as in~\ref{A1} and~\ref{A2},}  we will find a perfect tiling with $K_{r-s}$ (possibly need extra copies of $K_{r-s+1}$, denoted as $\mathcal{F}$) in $G'[B'\setminus V(\mathcal{R}\cup\mathcal{H})]$ and contract each copy of $K_{r-s}$ (resp. $K_{r-s+1}$ in $\mathcal{F}$) into a vertex (resp. an edge), yielding a new vertex set, denoted as $B^*$ whose size is close to $|A_i'|$ for every $i\in[s]$. In this case, ~\ref{A2} can be easily achieved by~\ref{B2}, and in particular, the matching number in $B^*$ can be achieved by the construction of $B^*$ and an additional constraint on $|\mathcal{F}|=(r-s)b$. Indeed, such a perfect tiling can be constructed by first taking $\mathcal{F}$ (a few number of $K_{r-s+1}$) from $B'\setminus V(\mathcal{H}\cup \mathcal{R})$ and then finding a $K_{r-s}$-factor in the remaining set in two cases depending on whether $r-s=2$ or not. For $r-s\geq 3$, a desired $K_{r-s}$-factor can be obtained from Theorem~\ref{thm:Non-extremal case r>3}; for $r-s=2$, a perfect matching can also be found by Lemma~\ref{thm:r-s=2} after a minor adjustment towards $\mathcal{K}\cup \mathcal{H}\cup \mathcal{F}$.

\end{itemize}

\medskip

\noindent{\bf Organization:} {In Section~\ref{sec:non-extremal} we consider the non-extremal case, establishing Theorem~\ref{thm:Non-extremal case}.
Section~\ref{sec:extremal} deals with the extremal case, i.e., Theorem~\ref{lem:extremal-case}.
The proof of Theorem~\ref{lem:extremal-case} splits into two parts: first we show that \(G\) admits a good partition (Lemma~\ref{lem:construct good-partition}), and then we prove that every graph possessing such a partition contains a \(K_r\)-factor (Lemma~\ref{lem:covering bad vertices for balance}).
The proofs of Lemma~\ref{lem:construct good-partition} and Lemma~\ref{lem:covering bad vertices for balance} are provided in Section~\ref{sec:lemma}. Some concluding remarks are given in the last section.}

\section{Non-extremal case}\label{sec:non-extremal}

This section is devoted to the proof of Theorem~\ref{thm:Non-extremal case}, which concerns the case when the host graph $G$ contains no large sparse set. The existence of a $K_r$-factor in such a graph $G$ was established by Gan, Han, and Hu~\cite{gan2024}, under a slightly weaker minimum degree condition.

\begin{theorem}[\cite{gan2024}]\label{thm:Non-extremal case r>3}
Suppose $r\ge 3$ and $\frac{1}{n}\ll\alpha\ll\gamma\ll\frac{1}{r}$. Let $G$ be an $rn$-vertex graph with $\de(G)\ge (r-1-\alpha)n$. If $G$ has no $\gamma$-independent set of size $n$, then there is a $K_r$-factor in $G$.
\end{theorem}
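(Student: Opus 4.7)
The plan is to apply the absorbing method of R\"odl, Ruci\'nski and Szemer\'edi. First, I would construct an absorbing set $A\subseteq V(G)$ of size $|A|=o(n)$ satisfying the following \emph{absorbing property}: for every $U\subseteq V(G)\setminus A$ with $|U|\le\eta n$ and $r\mid|U|$, the induced subgraph $G[A\cup U]$ admits a $K_r$-factor, where $\eta=\eta(r)$ is sufficiently small. Granted such $A$, the graph $G-A$ still satisfies $\delta(G-A)\ge(1-\tfrac{1}{r}-o(1))|V(G-A)|$, so Koml\'os's almost-perfect $K_r$-tiling theorem produces a $K_r$-tiling of $G-A$ leaving an uncovered set $L$ of size at most $\eta n$; the absorbing property then upgrades this to a $K_r$-factor of $G$.

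The core task is constructing $A$. Following the now-standard ``swap" recipe, for each $v\in V(G)$ I would define a $v$-absorbing gadget to be a vertex set $T$ of constant size built around an $(r-1)$-clique $Q\subseteq N_G(v)$ and a swap partner $w\in V(G)\setminus\{v\}$ with $Q\subseteq N_G(w)$, augmented by enough common-neighborhood cliques so that absorbing $v$ into the tiling can be accomplished by a local swap. Provided every vertex admits $\Omega(n^{|T|})$ such gadgets, a standard random selection together with a Chernoff-type concentration argument produces $A$ with the required property; the almost-perfect tiling and final absorption steps are then off-the-shelf.

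The hard part --- and the only place where the non-extremal hypothesis is essential --- is ensuring many such absorbing gadgets for every vertex $v$. Under the minimum degree alone, a pathological vertex $v$ could have $N_G(v)$ almost covered by a nearly independent set of linear size, leaving too few $(r-1)$-cliques $Q\subseteq N_G(v)$ to seed the swap. The assumption that $G$ has no $\gamma$-independent set of size $n$ rules this out: a supersaturation argument (via Kruskal--Katona or a stability-type bound such as Lemma~\ref{SuperT}) shows that every vertex subset of size $\Omega(n)$ contains $\Omega(n^{r-1})$ copies of $K_{r-1}$, so in particular $N_G(v)$ and every common neighborhood $N_G(v)\cap N_G(w)$ (linear-sized by the minimum degree assumption) host $\Omega(n^{r-1})$ swap seeds. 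Once these counts are secured the rest of the argument is a routine invocation of the absorbing template, so the sole obstacle is the supersaturation step, enabled by the gap $\gamma\gg\alpha$.
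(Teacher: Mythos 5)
The paper does not prove this statement itself; Theorem~\ref{thm:Non-extremal case r>3} is quoted verbatim from Gan, Han and Hu~\cite{gan2024}, so there is no internal proof to compare against. I therefore assess your sketch on its own terms.

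The high-level plan (absorption plus almost-perfect tiling) is the right framework, and the absorber-construction step is plausible: the no-sparse-set hypothesis together with Lemma~\ref{SuperT} does indeed guarantee $\Omega(n^{r-1})$ copies of $K_{r-1}$ inside any linear-sized vertex set, which is exactly the supersaturation one needs to seed the swap gadgets. The gap is in the almost-perfect tiling step. You write that $\delta(G-A)\ge(1-\tfrac{1}{r}-o(1))|V(G-A)|$ and then invoke Koml\'os's tiling theorem to leave at most $\eta n$ uncovered vertices. But Koml\'os's theorem requires minimum degree at least $(1-\tfrac{1}{\chi_{cr}(K_r)})|V|=(1-\tfrac{1}{r})|V|$ with \emph{no slack}: under the hypothesis $\delta(G)\ge(r-1-\alpha)n$ the degree ratio is $1-\tfrac{1}{r}-\tfrac{\alpha}{r}$, which is a fixed positive amount \emph{below} the Koml\'os threshold, and removing the $o(n)$-sized absorber does not repair this. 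The perturbed statement is genuinely false: a complete $r$-partite graph on $rn$ vertices with one part of size $(1+\alpha)n$ and the others of size $n-\tfrac{\alpha n}{r-1}$ has $\delta=(r-1-\alpha)n$ yet every $K_r$-tiling misses $\Theta(\alpha n)$ vertices, so no tiling misses only $\eta n$ once $\eta\ll\alpha$. That extremal example is ruled out precisely by the no-sparse-set hypothesis, but your argument never uses that hypothesis at the tiling stage --- only at the gadget-counting stage. You would need a strengthened almost-perfect tiling lemma that feeds the non-extremal condition into the reduced graph (e.g.\ a fractional $K_r$-factor in the cluster graph, as in the lattice-absorption framework of Han), not an off-the-shelf invocation of Koml\'os. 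As written, the sole use of the key hypothesis is at the wrong step, and the step it is missing from is the one that is not routine.
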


Next, we introduce the Frieze-Kannan Regularity Lemma~\cite{Frieze}, which will be used in the proof of Theorem~\ref{thm:Non-extremal case}.
\begin{lemma}[\cite{Frieze}]\label{partition}
    For any $\eps>0$, there are $T,n_0>0$ such that the following holds for all $n\geq n_0$. Every $n$-vertex graph $G$ admits a vertex partition $\{V_1,\ldots,V_t\}$  with $t\leq T$ and $|V_1|\leq \cdots\leq |V_t|\leq |V_1|+1$ satisfying for any $A,B\subseteq V(G)$ we have 
    $$
    \Big|e(G[A,B])-\sum_{1\le i,j\le t}\frac{|A\cap V_i||B\cap V_j|}{|V_i||V_j|}e(G[V_i,V_j])\Big|<\eps n^2.
    $$
\end{lemma}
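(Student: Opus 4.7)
The plan is to run a standard energy-increment argument, close in spirit to Szemer\'edi's proof but significantly simpler since the Frieze--Kannan notion of regularity is a single global condition rather than one imposed on every pair of parts. For a partition $\mathcal{P}=\{V_1,\dots,V_t\}$ of $V(G)$ define the mean-square-density energy
$$
q(\mathcal{P})=\frac{1}{n^2}\sum_{i,j=1}^{t}\frac{e(G[V_i,V_j])^2}{|V_i|\,|V_j|},
$$
which satisfies $0\le q(\mathcal{P})\le 1$ and, by Cauchy--Schwarz, is monotone under refinement.

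The engine of the proof is a single defect-Cauchy--Schwarz increment claim: if $\mathcal{P}$ fails the stated inequality for some witness pair $A,B\subseteq V(G)$, then the common refinement $\mathcal{P}'$ of $\mathcal{P}$ with the four atoms generated by $A$ and $B$ (so each $V_i$ splits into at most four pieces) satisfies $q(\mathcal{P}')\ge q(\mathcal{P})+\eps^{2}$. The discrepancy on the left-hand side is precisely the $\ell_{2}$-deviation between the edge indicator of $G$ on $A\times B$ and the step-function arising from the densities $d(V_i,V_j)=e(G[V_i,V_j])/(|V_i||V_j|)$, and this $\ell_{2}$-deviation translates directly into a gain of energy when one refines by the atoms cut out by $A$ and $B$.

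Starting from the trivial one-part partition and iterating, the process must terminate within $\lceil 1/\eps^{2}\rceil$ steps, yielding a partition with at most $T_0\le 4^{1/\eps^{2}}$ parts. To enforce the equitability condition $|V_1|\le\cdots\le|V_t|\le |V_1|+1$, I would then further subdivide each part into blocks of a common size $\lfloor n/T\rfloor$ for a suitable multiple $T$ of $T_0$ and distribute any leftover vertices one per block; since this is again a refinement, $q$ only increases and the regularity condition persists. Running the whole argument with $\eps/2$ throughout absorbs the mild slack introduced at re-equitization. The main obstacle is purely bookkeeping: one must verify carefully that the defect-Cauchy--Schwarz increment really gains $\Omega(\eps^{2})$ independently of the sizes $|V_i|$, which is precisely why the FK formulation is phrased with the scaling $e(G[V_i,V_j])/(|V_i||V_j|)$ inside the sum.
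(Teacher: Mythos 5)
The paper does not prove this lemma; it is imported verbatim from Frieze and Kannan, so there is no in-paper proof to compare against. Your energy-increment argument is the standard route to the Frieze--Kannan weak regularity lemma, and its core is sound: the mean-square-density $q(\mathcal{P})$ lives in $[0,1]$ and is monotone under refinement, a cut-norm witness $(A,B)$ cut into four Venn atoms per part gives (via defect Cauchy--Schwarz, or equivalently the Pythagorean identity for conditional expectation) $q(\mathcal{P}')\ge q(\mathcal{P})+\eps^2$, and the iteration therefore stops within $\lceil 1/\eps^2\rceil$ rounds with at most $4^{1/\eps^2}$ parts. All of that is correct.

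The gap is in the equitization step, and it is genuine on two counts. First, the arithmetic does not work: after chopping each of the $T_0$ final parts into pieces of size $\lfloor n/T\rfloor$, the total leftover can be as large as about $T_0\lfloor n/T\rfloor=\Theta(n)$, while there are only $\Theta(T)=O(1)$ blocks available to receive ``one vertex per block''; so this redistribution cannot produce a partition with $|V_1|\le\cdots\le|V_t|\le|V_1|+1$ for a constant $t\le T$. Second, and more fundamentally, the claimed justification ``since this is again a refinement, $q$ only increases and the regularity condition persists'' is a non sequitur: monotonicity of the $L^2$-energy does \emph{not} imply that cut-norm discrepancy is preserved under refinement. (In fact refinement does degrade the FK-regularity parameter by at most a factor of $2$, but the correct reason is that conditional expectation is a contraction in cut norm: if $h=A-f_{\mathcal{P}}$ then $A-f_{\mathcal{P}'}=h-\mathbb{E}[h\mid\mathcal{P}']$ and $\|\mathbb{E}[h\mid\mathcal{P}']\|_{\square}\le\|h\|_{\square}$ because pairing against $\mathbf{1}_{S\times T}$ pulls back to a bilinear form on the unit cube, which is maximised at a vertex. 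That argument has nothing to do with $q$ increasing.) Moreover the ``distribute leftovers'' move is not a refinement at all, so even the corrected refinement lemma would not cover it. A correct treatment either maintains equitability inside the iteration (re-equitising after each Venn refinement and bounding the accompanying energy loss), or proves the lemma first without the equitability clause and then reaches an equitable partition through a separate perturbation bound on the cut norm when a small fraction of vertices are moved; as written your proposal supplies neither.
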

\medskip

Now we are ready to present a short proof of Theorem~\ref{thm:Non-extremal case}.

\begin{proof}[{\bf Proof of Theorem~\ref{thm:Non-extremal case}}]
Choose $r\geq 3,n\in \mathbb{N}$ and a constant $\gamma$ satisfying $\frac{1}{n}\ll\gamma\ll \frac{1}{r}$. Let $G$ be an $((r-1)n+1)$-regular graph on $rn$ vertices such that $G$ contains no $\gamma$-independent set of size $n$. Let $\{V_1,\ldots,V_t\}$ be a partition of $V(G)$ obtained from Lemma~\ref{partition} applied with $\eps=\frac{1}{10}\gamma$. Consider $G[\frac{1}{2}]=G[S]$, where $S$ is a random subset of $V(G)$ with $r\mid |S|$. Note that $r \mid |S|$ is a necessary condition for $G[S]$ to contain a $K_r$-factor, which occurs with probability $\frac{1}{r}$. By Chernoff bound, {w.h.p.}~$|S|=\frac{r}{2}n\pm n^{0.6}$, $|S\cap V_i|= \frac{rn}{2t}\pm n^{0.6}$ for each $i\in [t]$,  and every vertex has degree $\frac{r-1}{2}n\pm n^{0.6}$ in $G[S]$. 
Conditioned on these events,  we prove the following claim. 
\begin{claim}\label{no-independent}
    $G[S]$ contains no $\frac{\gamma}{25}$-independent set of size $\frac{|S|}{r}$. 
\end{claim}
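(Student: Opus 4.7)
The plan is to argue by contradiction: suppose there exists $T\subseteq S$ with $|T|=\tfrac{|S|}{r}$ and $e(G[T])\le \tfrac{\gamma}{25}n^2$, and exhibit a subset $T^*\subseteq V(G)$ with $|T^*|=n$ and $e(G[T^*])<\gamma n^2$. Since $G$ has no $\gamma$-independent set of size $n$, this is the desired contradiction.

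The idea is to \emph{inflate} $T$ by a factor of roughly two on each regularity block. For each $i\in[t]$, write $t_i=|T\cap V_i|$ and $s_i=|S\cap V_i|$. The conditioning gives $s_i=\tfrac{rn}{2t}\pm n^{0.6}$, and the Frieze--Kannan partition satisfies $|V_i|=\tfrac{rn}{t}\pm 1$, so $|V_i|/s_i=2\pm o(1)$ uniformly in $i\in[t]$. I choose $T^*\cap V_i\subseteq V_i$ arbitrarily with $|T^*\cap V_i|=\lfloor t_i|V_i|/s_i\rfloor$ (legal since $t_i\le s_i$), then adjust by inserting or deleting $o(n)$ vertices to enforce $|T^*|=n$ exactly. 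Each insertion or deletion alters $e(G[T^*])$ by at most $\Delta(G)\le rn$, so the total perturbation is $o(n^2)$.

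Applying Lemma~\ref{partition} with $\eps=\tfrac{\gamma}{10}$ to both $T$ and $T^*$ yields
\[
2e(G[W])=\sum_{1\le i,j\le t}\frac{|W\cap V_i|\,|W\cap V_j|}{|V_i||V_j|}e(G[V_i,V_j])\;\pm\;\tfrac{\gamma}{10}n^2,\qquad W\in\{T,T^*\}.
\]
Writing $|T^*\cap V_i|=2t_i+\eps_i$, one has $|\eps_i|=o(n)$: indeed $|\eps_i|\le t_i\cdot|{|V_i|/s_i-2}|+O(1)=o(t_i)+O(1)=o(n)$. Expanding $(2t_i+\eps_i)(2t_j+\eps_j)=4t_it_j+2t_i\eps_j+2t_j\eps_i+\eps_i\eps_j$ and controlling each cross term via $t_i\le s_i=O(n)$, $|\eps_i|=o(n)$, $e(G[V_i,V_j])\le|V_i||V_j|$, and $t^2=O(1)$ shows that the weighted sum for $T^*$ exceeds $4$ times the weighted sum for $T$ by at most $o(n^2)$. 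Combining this with the two Frieze--Kannan approximations gives
\[
2e(G[T^*])\;\le\;8\,e(G[T])+5\eps n^2+o(n^2)\;\le\;\tfrac{8\gamma}{25}n^2+\tfrac{\gamma}{2}n^2+o(n^2)\;=\;\tfrac{41\gamma}{50}n^2+o(n^2),
\]
so for sufficiently large $n$ we have $e(G[T^*])<\gamma n^2$, producing the contradiction.

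The main obstacle is the careful error accounting: the Frieze--Kannan error ($\tfrac{\gamma}{10}n^2$, incurred twice, once with a factor-$4$ amplification), the rounding error from adjusting $|T^*|$ to $n$ ($o(n^2)$), and the cross-term errors from $|T^*\cap V_i|\ne 2t_i$ ($o(n^2)$) must all combine to leave a constant gap below $\gamma n^2$. The constant $\tfrac{1}{25}$ in the claim's hypothesis is tuned precisely so that after the factor-$4$ blow-up from $T$ to $T^*$ one still has $\tfrac{8\gamma}{25}+\tfrac{\gamma}{2}=\tfrac{41\gamma}{50}<\gamma$, which is exactly what makes the lifted set $T^*$ a $\gamma$-independent $n$-set.
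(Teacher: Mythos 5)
Your proposal is correct and follows essentially the same strategy as the paper's own proof: you inflate the putative sparse set $T\subseteq S$ to a set $T^*\subseteq V(G)$ of size about $n$ by doubling the per-block counts (the ratio $|V_i|/s_i$ is $2\pm o(1)$, so your $\lfloor t_i|V_i|/s_i\rfloor$ agrees with the paper's $2|A'\cap V_i|\pm 2n^{0.6}$), and then you use the Frieze--Kannan approximation twice to transfer density from $T^*$ back to $T$. The only cosmetic difference is that the paper argues in the contrapositive direction (lower-bound $e(G[A'])$ directly) while you argue by contradiction, and your constant bookkeeping is slightly looser but still lands comfortably below $\gamma n^2$.
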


\begin{proof}
Given any $A'\subseteq S$ with $|A'|= \frac{|S|}{r}$, we are to show that $e(G[A'])\ge \frac{\gamma}{25} n^2$. 
Construct $A\subseteq V(G)$ 
so that {$|A\cap V_i|=2|A'\cap V_i|\pm 2n^{0.6}$} for all $i\in [t]$. Notice that $|A|=n\pm 3tn^{0.6}$ and $e(G[A])\geq \frac{4}{5}\gamma n^2$ as $G$ itself contains no $\gamma$-independent set of size $n$. Lemma~\ref{partition} implies that
\begin{itemize}
    \item $e(G[A])$ differs by at most $\frac{\gamma}{10}n^2$ from $\sum:=\sum_{i,j}\frac{|A\cap V_i||A\cap V_j|}{|V_i||V_j|}e(G[V_i,V_j])$;
    \item $e(G[A'])$ differs by at most $\frac{\gamma}{10}n^2$ from $\sum_{i,j}\frac{|A'\cap V_i||A'\cap V_j|}{|V_i||V_j|}e(G[V_i,V_j])\geq {\frac{1}{5}}\sum$. 
\end{itemize}
{{It follows that 
$$
e(G[A'])\geq \frac{1}{5}\sum-\frac{1}{10}\gamma n^2\geq \frac{1}{5}\Big(e(G[A])-\frac{\gamma}{10} n^2\Big)-\frac{\gamma}{10}n^2\geq \frac{7}{50}\gamma n^2-\frac{\gamma}{10} n^2= \frac{\gamma}{25}n^2. 
$$}} 
That is, $G[S]$ contains no $\frac{\gamma}{25}$-independent set of size $\frac{|S|}{r}$. 
\end{proof}
Combined with the condition $\de(G[S])\geq \frac{r-1}{2}n- n^{0.6}$, Theorem~\ref{thm:Non-extremal case r>3} implies that $G[S]$ contains a $K_r$-factor, which happens with probability at least $\frac{1}{r}-o(1)$. 
\end{proof}

\section{Extremal case}\label{sec:extremal}

For the extremal case, our first task is to establish a good partition of the graph $G$, provided that it contains a large sparse set. 
\begin{lemma}\label{lem:construct good-partition}
    Let $r,n$ be positive integers with $r\ge 3$, and choose $\frac{1}{n}\ll \gamma\ll\alpha\ll\beta\ll\gamma'\ll \frac{1}{r}.$ {Let $G$ be an $rn$-vertex graph with $\delta(G)\geq (r-1)n+1$.} Suppose that $G$ contains a $\gamma$-independent set of size $n$. Then there exists an $(\alpha,\beta,\beta,\gamma')$-good partition $\mathcal{P}=\{A_1,\cdots,A_s,B\}$ of $G$ for some $s\in [r]$. 

\end{lemma}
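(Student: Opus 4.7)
The plan is to construct $\mathcal{P}$ in two stages: a greedy peeling that produces a preliminary partition $\mathcal{P}^0$, followed by a small-scale vertex reassignment that enforces \ref{B1}--\ref{B07}.

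For Stage~1, I set $A_1^0$ to be any $\gamma$-independent $n$-set in $G$, guaranteed by hypothesis. Iteratively, for $i\ge 2$, while $V(G)\setminus\bigcup_{j<i}A_j^0$ contains a $\tfrac{\gamma'}{2}$-independent $n$-set, I pick one (choosing the sparsest such set available) as $A_i^0$; otherwise I halt at some $s\in[r]$ and let $B^0:=V(G)\setminus\bigcup_j A_j^0$. Then $|A_i^0|=n$, $|B^0|=(r-s)n$, and $B^0$ contains no $\tfrac{\gamma'}{2}$-independent $n$-set. Since $\delta(G)\ge(r-1)n+1>(r-1)n=|V(G)\setminus A_i^0|$, every $v\in A_i^0$ satisfies $d(v,A_i^0)\ge 1$, so $G[A_i^0]$ has no isolated vertex; this immediately secures the nonemptiness clause of \ref{B2} when the final $|A_i|=n$. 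A cross-degree calculation, which I expect to carry out inductively using the regularity of $G$ together with the hypothesis of a $\gamma$-independent $n$-set, will also yield that the iteratively chosen $A_i^0$ are automatically much sparser than $\tfrac{\gamma'}{2}$-independent---essentially $O(\gamma n^2)$ edges each---because the sparsest-choice rule forces them to track the parts of a nearly $r$-partite structure.

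For Stage~2, define $X_i:=\{v\notin A_i^0:d(v,A_i^0)\le\alpha^{1/5}n\}$ and $Z_i:=\{v\in A_i^0:d(v,A_i^0)>\alpha^{1/5}n\}$. Double counting on $e(A_i^0,V\setminus A_i^0)\ge n((r-1)n+1)-2e(G[A_i^0])$ yields $|X_i|,|Z_i|=O(\gamma n/\alpha^{1/5})=o(\alpha n)$, where the hierarchy is tuned so that $\gamma\ll\alpha^{6/5}$. I then form $\mathcal{P}=\{A_1,\ldots,A_s,B\}$ by reassigning each $v\in X_i$ to $A_i$ (ties broken arbitrarily), moving each $v\in Z_i$ to $B$, and leaving all other vertices in place. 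Since the total number of moves is $o(\alpha n)$, condition \ref{B1} holds immediately, and \ref{B03} follows because every remaining vertex of $A_i$ has $d(v,A_i)\le\alpha^{1/5}n+o(n)$. For \ref{B04}, any $v\notin A_i$ satisfies $v\notin X_i$, so $d(v,A_i^0)>\alpha^{1/5}n$, hence $d(v,A_i)\ge\alpha^{1/5}n-o(n)\ge\beta n$ once $\alpha$ is tuned so that $\alpha^{1/5}\ge 2\beta$. Condition \ref{B05} follows from the identity $d(v,B^0)=(r-1)n+1-\sum_j d(v,A_j^0)$ together with the same bookkeeping, \ref{B06} is controlled by the same edge-count that bounds $|Z_i|$, and \ref{B07} follows because $B$ and $B^0$ differ by only $o(\alpha n)$ vertices, so any $\gamma'$-independent $n$-set in $B$ would restrict to a $\tfrac{\gamma'}{2}$-independent $n$-set inside $B^0$, contradicting Stage~1's termination.

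The principal technical obstacle I anticipate is the matching clause in \ref{B2}: whenever $|A_i|>n$, a matching of size $|A_i|-n+r$ in $G[A_i]$ is required. The max-degree clause $\Delta(G[A_i])\le\beta n$ is automatic from the preceding bookkeeping, but the matching bound cannot be deduced from $G[A_i^0]$ alone, since $G[A_i^0]$ may be close to a star. My plan is a dichotomy on the matching number $m:=\nu(G[A_i^0])$: if $m\ge|X_i|+r$, then deleting $Z_i$ loses at most $|Z_i|$ matching edges and the survivor has size at least $|X_i|-|Z_i|+r=|A_i|-n+r$; if $m<|X_i|+r=o(n)$, then by K\"onig-type bounds $G[A_i^0]$ has a vertex cover $C\subseteq A_i^0$ of size $2m=o(n)$, $A_i^0\setminus C$ is independent of size $\ge n-o(n)$, and the degree-regularity $\delta(G)=(r-1)n+1$ together with the independence of $A_i^0\setminus C$ forces $\sum_{v\in A_i^0\setminus C}d(v,V\setminus A_i^0)\ge(n-o(n))((r-1)n+1-o(n))$, so nearly every $u\in V\setminus A_i^0$ is adjacent to nearly all of $A_i^0$; hence $|X_i|=o(m)$ and the final $|A_i|\le n$, making \ref{B2} vacuous. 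Pinning down the precise quantitative form of this dichotomy and tuning $\alpha$ into the narrow band $\beta^5\lesssim\alpha\ll\beta$ so that all threshold inequalities line up is the technical heart of the argument.
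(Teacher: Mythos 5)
Your skeleton—greedy peeling into sparse $n$-sets, then local reassignment—matches the paper's plan, but there are several genuine gaps that together break the proof.

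First, your Stage~1 fixes a \emph{single} sparsity threshold $\tfrac{\gamma'}{2}$ for every iteration and then asserts that a ``sparsest-choice rule'' forces $e(G[A_i^0])=O(\gamma n^2)$. Nothing in the hypotheses justifies this: the sparsest $\tfrac{\gamma'}{2}$-independent $n$-set remaining after removing $A_1^0$ can legitimately have anywhere up to $\tfrac{\gamma'}{2}n^2$ edges. But your Stage~2 bookkeeping critically needs $|X_i|,|Z_i|=o(\alpha n)$, and the double-count gives only $|Z_i|\lesssim \tfrac{2e(G[A_i^0])}{\alpha^{1/5}n}$; with $e(G[A_i^0])$ as large as $\gamma' n^2/2$ this is $\gg \alpha n$ since $\alpha\ll\gamma'$. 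The paper handles this by choosing an \emph{escalating} hierarchy $\gamma\ll\gamma_1\ll\cdots\ll\gamma_r\ll\tfrac1r$ and taking $A_i^0$ to be $\gamma_i$-independent; then $\alpha,\beta$ are chosen \emph{afterwards} in the band $\gamma_s\ll\alpha\ll\beta\ll\gamma_{s+1}$, so the relevant sparsity parameter for the $A_i^0$ that are actually selected really is $\ll\alpha$. Your version does not have this flexibility.

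Second, you use the same threshold $\alpha^{1/5}n$ to define both $X_i$ (cross vertices with few $A_i^0$-neighbors) and $Z_i$ (internal vertices with many $A_i^0$-neighbors), and then you want both $\alpha^{1/5}\ge 2\beta$ (so that the reassignment yields $d(v,A_i)\ge\beta n$, i.e.\ \ref{B04}) and $\Delta(G[A_i])\le\beta n$ (the $\beta'=\beta$ clause of \ref{B2}). But your construction only gives $\Delta(G[A_i])\le\alpha^{1/5}n+|X_i|\approx 2\alpha^{1/5}n$, and $2\alpha^{1/5}\ge4\beta>\beta$ if $\alpha^{1/5}\ge 2\beta$. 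The two requirements are incompatible with a single threshold. The paper avoids this by using $2\beta$ as the bad-ness threshold for \ref{B04} (and proving via Claim 5.1(2) that few vertices are $2\beta$-bad) while using $\alpha^{1/4}$ as the good-ness threshold for \ref{B03}, and by moving high-degree vertices \emph{out} of a large $A_i$ one at a time until $\Delta(G[A_i])\le\beta n$ holds.

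Third, the dichotomy you propose for the matching clause is unsound. In the branch $m<|X_i|+r$, your double-count yields only $|X_i|\lesssim 2m$, which is perfectly consistent with $m<|X_i|+r$—there is no contradiction and no conclusion that $|A_i|\le n$. The argument you actually need is the one the paper makes and that you already have all the pieces for: once $\Delta(G[A_i])$ is small, the minimum degree hypothesis $\delta(G)\ge(r-1)n+1$ forces $\delta(G[A_i])\ge|A_i|-(rn-|A_i|)- \text{(loss)}\ge |A_i|-n+1$ whenever $|A_i|>n$, so by the vertex-cover bound in Remark~\ref{remark:vertex-cover} the matching number of $G[A_i]$ is at least $\tfrac{(|A_i|-n+1)|A_i|}{4\Delta(G[A_i])}\gg |A_i|-n+r$. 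There is no need to lift a matching from $G[A_i^0]$ at all.
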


Given a good partition of $G$, one can expect that $G[S]$ also admits a good partition. If so, then the following lemma would ensure the existence of a $K_r$-factor in $G[S]$.

\begin{lemma}\label{lem:covering bad vertices for balance}
Let $r,n$ be positive integers with $r\ge 3$ and choose $\frac{1}{n}\ll\alpha\ll\beta',  \beta\ll\gamma\ll \frac{1}{r}$. Let $G$ be an $rn$-vertex graph with {$\de(G)\geq (r-1)n-\alpha n$.}  
If $G$ admits an $(\alpha,\beta,{\beta'},
\gamma)$-good partition, then $G$ has a $K_r$-factor. 
\end{lemma}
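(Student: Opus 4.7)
The plan is to follow the three-stage blueprint spelled out in Section~2: first absorb the bad and exceptional vertices with a small $K_r$-tiling, then tweak the residual partition so that the side $B$ can be decomposed into $K_{r-s}$'s with a few $K_{r-s+1}$'s thrown in, and finally contract these cliques to obtain an almost balanced multipartite graph on which we may invoke Lemma~\ref{lem:balance}.

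For the cleaning step, fix the $(\alpha,\beta,\beta',\gamma)$-good partition $\mathcal{P}=\{A_1,\dots,A_s,B\}$ and let $X$ be the union, over all $i\in[s]$, of the set of vertices in $A_i$ that are not $(\alpha^{1/5},A_i)$-good together with the set of $(\alpha^{1/5},B)$-exceptional vertices of $B$. By~\ref{B03} and~\ref{B06}, $|X|=O(\alpha n)$, while~\ref{B04} and~\ref{B05} guarantee that each $v\in X$ has $\beta n$ neighbors in every other part. I would greedily extend each $v\in X$ to a copy of $K_r$ using one good vertex from every $A_j$ in which $v$ does not already lie and $r-s$ (or $r-s-1$ if $v\in B$) additional vertices from $B$; the internal clique in $B$ exists because~\ref{B05} gives minimum degree $(r-s-1)n-O(\alpha n)$ inside $G[B]$, so the classical Hajnal--Szemer\'edi theorem applied in the relevant common neighborhood produces a $K_{r-s}$. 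The resulting tiling $\mathcal{K}$ has size $O(\alpha n)$ and, by varying how many $B$-vertices each clique consumes, can be arranged to satisfy $(r-s)|V(\mathcal{K})\cap A_i|=|V(\mathcal{K})\cap B|$ for every $i\in[s]$, so the proportional sizes of $A_i\setminus V(\mathcal{K})$ and $B\setminus V(\mathcal{K})$ remain close to $1:(r-s)$.

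Next, write $A_i':=A_i\setminus V(\mathcal{K})$, $B':=B\setminus V(\mathcal{K})$ and $G'':=G-V(\mathcal{K})$. The obstruction left to resolve is that $|B'|$ may not be divisible by $r-s$, or that some large $A_i'$ may have more than $\tfrac{|V(G'')|}{r}$ vertices. I would extract two small vertex-disjoint $K_r$-tilings $\mathcal{H}$ and $\mathcal{R}$ from $G''$: every clique of $\mathcal{H}$ uses $r-s+1$ vertices from $B'$ and one good vertex from each $A_i'$, while every clique of $\mathcal{R}$ uses $r-s-1$ vertices from $B'$ together with a matching edge drawn from the matching promised by~\ref{B2} inside some overfull $A_i'$ (and one good vertex from each remaining $A_j'$). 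The existence of linearly many copies of $K_{r-s+1}$ and $K_{r-s-1}$ in $G''[B']$ follows from the non-sparseness condition~\ref{B07} via the supersaturated stability tool Lemma~\ref{SuperT}, and the bipartite degree bounds in~\ref{B04}--\ref{B05} supply the needed vertices in the other classes. Choosing $|\mathcal{H}|,|\mathcal{R}|=O(\alpha n)$ judiciously, I arrange that $(r-s)\mid |B'\setminus V(\mathcal{H}\cup\mathcal{R})|$ and that the non-negative integer $b:=\tfrac{|B'\setminus V(\mathcal{H}\cup\mathcal{R})|}{r-s}-\tfrac{|V(G'')\setminus V(\mathcal{H}\cup\mathcal{R})|}{r}$ is small.

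To conclude, I pick $(r-s)b$ vertex-disjoint copies of $K_{r-s+1}$ inside $B'\setminus V(\mathcal{H}\cup\mathcal{R})$ to form $\mathcal{F}$ (again via Lemma~\ref{SuperT}) and then a $K_{r-s}$-factor of what remains of $B'$; for $r-s\ge 3$ this factor comes from Theorem~\ref{thm:Non-extremal case r>3} (whose hypotheses follow from~\ref{B05} and~\ref{B07}), and for $r-s=2$ a perfect matching is delivered by Lemma~\ref{thm:r-s=2} after a minor local readjustment of $\mathcal{K}\cup\mathcal{H}\cup\mathcal{F}$. Contracting each $K_{r-s}$ to a vertex and each clique of $\mathcal{F}$ to an edge produces a new class $B^{*}$ of size equal to $|A_i'|$ for every $i\in[s]$. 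The resulting auxiliary graph together with the partition $\{A_1',\dots,A_s',B^{*}\}$ meets the hypotheses of Lemma~\ref{lem:balance}: the bipartite minimum degrees transfer from~\ref{B04}--\ref{B05}, and the required matching in each $A_i'$ persists from~\ref{B2} through the construction of $B^{*}$. Thus Lemma~\ref{lem:balance} outputs a $K_r$-factor of the auxiliary graph, and uncontracting it and taking the union with $\mathcal{K}\cup\mathcal{H}\cup\mathcal{R}\cup\mathcal{F}$ yields the desired $K_r$-factor of $G$. I expect the main technical difficulty to lie in the second stage: the divisibility constraints on $|\mathcal{K}|,|\mathcal{H}|,|\mathcal{R}|,|\mathcal{F}|$ must be met simultaneously, while one has to ensure that enough matching edges in each large $A_i$ survive the extraction of $V(\mathcal{R})$ and that sufficient supersaturation inside $G[B]$ remains after the repeated clique extractions.
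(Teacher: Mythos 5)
Your overall strategy matches the paper's: clean bad and exceptional vertices with a small $K_r$-tiling, use $\mathcal{H}$ and $\mathcal{R}$ to fix divisibility and imbalance, contract cliques in $B$, and invoke Lemma~\ref{lem:balance}. There is, however, a concrete gap in the cleaning step that the paper handles differently.

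You assert that the internal clique of $K_{r-s}$ (or $K_{r-s-1}$) inside $B$ can be produced ``by applying the classical Hajnal--Szemer\'edi theorem in the relevant common neighborhood.'' That step fails. Because the bad or exceptional vertex $v$ being absorbed may have as few as $\beta n$ neighbors in $B$, the common neighborhood $\Tilde{B}=N(\{v_1,\dots,v_s\},B)$ (after fixing good representatives $v_j$ in the other classes together with $v$) shrinks to roughly $(r-s-1)n$ vertices, and the minimum degree inside $G[\Tilde{B}]$ inherited from~\ref{B05} is then only about $\bigl(1-\tfrac{1}{r-s-1}\bigr)|\Tilde{B}|$. This is the Tur\'an threshold, not the Hajnal--Szemer\'edi threshold: a complete $(r-s-1)$-partite $\Tilde{B}$ satisfies that degree bound yet contains no copy of $K_{r-s}$ whatsoever. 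To actually find the clique, the paper's Claims~\ref{covering-v} and~\ref{claim:good Kr-s in B} must combine the degree bound with~\ref{B07} and the supersaturated stability result Lemma~\ref{SuperT} to rule out the near-multipartite structure. You cite Lemma~\ref{SuperT} and~\ref{B07} for the divisibility stage, but they are equally indispensable in the cleaning stage; omitting them leaves your greedy absorption with no guarantee of success.

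A secondary concern: you note that matching edges in each overfull $A_i'$ must survive the extraction of $V(\mathcal{R})$ and the contraction, but you give no mechanism. The paper handles this by selecting the matchings $M_i$ (and hence the set $U$) \emph{first}, building the cleaning tiling $\mathcal{K}$, the divisibility tiling $\mathcal{H}$, and the intermediate copies of $K_{r-s}$ entirely inside $G-U$, and only letting $\mathcal{R}$ consume edges from a controlled subset $U'\subset U$. Without reserving $U$ in advance, nothing in your construction prevents the cleaning step from consuming the very matching edges you later need to verify~\ref{A2} for Lemma~\ref{lem:balance}.
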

We will prove Theorem~\ref{lem:extremal-case} using Lemmas~\ref{lem:construct good-partition} and~\ref{lem:covering bad vertices for balance}, whose proofs are postponed to the next section.
\subsection{Proof of Theorem~\ref{lem:extremal-case}}
In this subsection we prove Theorem~\ref{lem:extremal-case}, which establishes Theorem~\ref{thm:the random version of main thm} for graphs containing a sparse set of size $n$. 
As the size of a random subset of $A$ is distributed as the binomial $B(|A|, 1/2)$, we shall use the following normal approximation, which is a consequence of the Central Limit Theorem.

\begin{lemma}[\cite{Keevash2025}]\label{lem-int}
If $X\sim B(n,1/2)$, then $\mathbb{P}(a\sqrt{n}/2\leq X-n/2\leq b\sqrt{n}/2)=\int_a^b \frac{1}{\sqrt{2\pi}}e^{-t^2/2}\,dt+o(1)$.
\end{lemma}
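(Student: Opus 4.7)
The plan is to deduce the statement from the classical Central Limit Theorem applied to a symmetric binomial. Writing $X = \sum_{i=1}^n Y_i$ where $Y_1, \ldots, Y_n$ are i.i.d.\ Bernoulli$(1/2)$ random variables (each with mean $1/2$ and variance $1/4$), we obtain $\mathbb{E}[X] = n/2$ and $\mathrm{Var}(X) = n/4$, so the standardised sum
$$Z_n := \frac{X - n/2}{\sqrt{n}/2}$$
has mean $0$ and variance $1$. By the Lindeberg--L\'evy CLT, $Z_n$ converges in distribution to a standard normal $\mathcal{N}(0,1)$. Since the standard normal distribution is absolutely continuous, convergence in distribution at the continuity points $a, b$ gives $\mathbb{P}(a \le Z_n \le b) \to \Phi(b) - \Phi(a)$ as $n \to \infty$. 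Rewriting the event $\{a \le Z_n \le b\}$ as $\{a\sqrt{n}/2 \le X - n/2 \le b\sqrt{n}/2\}$ and using $\Phi(b) - \Phi(a) = \int_a^b \frac{1}{\sqrt{2\pi}} e^{-t^2/2}\, dt$ then yields the claimed formula with a $o(1)$ error term.

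If a self-contained derivation is preferred, one can instead invoke the local de Moivre--Laplace theorem. Stirling's formula gives, uniformly over integers $k$ with $|k - n/2| \le C\sqrt{n}$ for any fixed $C$,
$$\mathbb{P}(X = k) = \binom{n}{k} 2^{-n} = \sqrt{\frac{2}{\pi n}}\, \exp\!\left(-\frac{2(k-n/2)^2}{n}\right)(1+o(1)).$$
Summing over integers $k$ with $a\sqrt{n}/2 \le k - n/2 \le b\sqrt{n}/2$ and substituting $t = (2k-n)/\sqrt{n}$ (so consecutive values of $k$ correspond to $\Delta t = 2/\sqrt{n}$) recognises the sum as a Riemann sum of step $2/\sqrt{n}$ for $\int_a^b \frac{1}{\sqrt{2\pi}} e^{-t^2/2}\, dt$; uniform continuity of the Gaussian density on $[a,b]$ then bounds the Riemann-sum error by $O(1/\sqrt{n})$, and the two endpoint lattice cells contribute an additional $O(1/\sqrt{n})$.

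I would expect no genuine obstacle: the only mild technicality is verifying that the closed interval versus half-open interval distinction (and the integer-lattice rounding of the endpoints $a\sqrt{n}/2$ and $b\sqrt{n}/2$) is absorbed into $o(1)$, which is immediate since $a, b$ are fixed as $n \to \infty$ and the Gaussian CDF is continuous everywhere. This is textbook material (see, e.g., Feller, \emph{An Introduction to Probability Theory and Its Applications}, Vol.~I, Chapter~VII), which is presumably why the proof is omitted and a reference is cited.
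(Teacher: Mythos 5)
Your proposal is correct and matches the paper's treatment: the paper states this lemma as a cited consequence of the Central Limit Theorem (de Moivre--Laplace) and gives no further proof, which is precisely your first argument. The additional local-limit-theorem derivation via Stirling's formula is a valid self-contained alternative but is not needed beyond the standard CLT citation.
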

We will also use the following version of Chernoff bound.
\begin{lemma}[Chernoff bound]
    Let $X$ be a binomial random variable and $\delta>0$. Then 
    $$
    \mathbb{P}[|X-\mathbb{E}X|\geq \delta \mathbb{E}X]\leq 2\exp(-\delta^2\mathbb{E}X/(2+\delta)).
    $$
\end{lemma}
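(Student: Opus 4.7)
\medskip

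\noindent\textbf{Proof proposal.} The plan is to prove the bound by the classical moment-generating-function approach (Bernstein/Chernoff method): one applies Markov's inequality to $e^{tX}$ for a parameter $t$ to be optimized, then uses the product structure of the MGF of a binomial and finally reduces the resulting transcendental bound to the stated exponential form. Handling both tails of $|X-\mathbb{E}X|$ requires two separate optimizations; adding the two bounds yields the factor of $2$ in the lemma.

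The steps I would carry out, in order, are the following. Write $X=\sum_{i=1}^{n}X_i$ with $X_i$ independent Bernoulli$(p_i)$ (for a general binomial; in the iid case take $p_i=p$), and set $\mu=\mathbb{E}X=\sum p_i$. First, for the upper tail, for any $t>0$ Markov gives
\[
\mathbb{P}[X\ge (1+\delta)\mu]\;\le\; e^{-t(1+\delta)\mu}\,\mathbb{E}\bigl[e^{tX}\bigr].
\]
Using independence and the elementary inequality $1+p_i(e^t-1)\le \exp(p_i(e^t-1))$, I bound
\[
\mathbb{E}\bigl[e^{tX}\bigr]\;=\;\prod_{i=1}^n\bigl(1+p_i(e^t-1)\bigr)\;\le\;\exp\bigl(\mu(e^t-1)\bigr).
\]
Second, I optimize by choosing $t=\ln(1+\delta)$, which yields the textbook bound
\[
\mathbb{P}[X\ge (1+\delta)\mu]\;\le\;\left(\frac{e^{\delta}}{(1+\delta)^{1+\delta}}\right)^{\!\mu}.
\]
Third, to convert this to the form stated in the lemma, I would verify the analytic inequality
\[
(1+\delta)\ln(1+\delta)-\delta\;\ge\;\frac{\delta^{2}}{2+\delta}\qquad(\delta>0),
\]
equivalently $f(\delta):=(1+\delta)\ln(1+\delta)-\delta-\delta^{2}/(2+\delta)\ge 0$. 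One checks $f(0)=0$ and differentiates twice: $f'(\delta)=\ln(1+\delta)-\dfrac{\delta(\delta+4)}{(2+\delta)^{2}}$ with $f'(0)=0$, and $f''(\delta)=\dfrac{\delta^{2}}{(1+\delta)(2+\delta)^{3}}\ge 0$, so $f'\ge 0$ and hence $f\ge 0$ on $(0,\infty)$. This delivers
\[
\mathbb{P}[X\ge (1+\delta)\mu]\;\le\;\exp\!\left(-\frac{\delta^{2}\mu}{2+\delta}\right).
\]

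Fourth, for the lower tail I run the analogous argument with $t<0$: apply Markov to $e^{-|t|X}$, use $1+p_i(e^{-|t|}-1)\le \exp(p_i(e^{-|t|}-1))$, optimize at $|t|=-\ln(1-\delta)$ (for $0<\delta<1$) to obtain
\[
\mathbb{P}[X\le (1-\delta)\mu]\;\le\;\left(\frac{e^{-\delta}}{(1-\delta)^{1-\delta}}\right)^{\!\mu}\;\le\;\exp\!\left(-\frac{\delta^{2}\mu}{2}\right)\;\le\;\exp\!\left(-\frac{\delta^{2}\mu}{2+\delta}\right),
\]
the middle inequality again being an elementary calculus check ($(1-\delta)\ln(1-\delta)+\delta\ge \delta^{2}/2$ on $[0,1)$). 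For $\delta\ge 1$ the lower-tail event is vacuous. Adding the two tail bounds yields the stated
\[
\mathbb{P}[|X-\mu|\ge \delta\mu]\;\le\;2\exp\!\left(-\frac{\delta^{2}\mu}{2+\delta}\right).
\]

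The only genuinely non-mechanical part is the analytic reduction in the third step: verifying the two scalar inequalities that replace the awkward factor $e^{\delta}/(1+\delta)^{1+\delta}$ by the clean exponential $e^{-\delta^{2}/(2+\delta)}$. Everything else (Markov on the MGF, product MGF of independent Bernoullis, the $1+x\le e^{x}$ step, and the optimization in $t$) is routine and identical for both tails. I would present the calculation in a single symmetric block, noting that the lower tail bound is in fact strictly stronger so the stated inequality holds with room to spare on that side.
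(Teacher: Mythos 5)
Your proof is correct, and it is the standard moment-generating-function derivation of this form of the Chernoff bound; the paper itself states the lemma as a known fact and gives no proof, so there is nothing to diverge from. One small computational slip: for $f(\delta)=(1+\delta)\ln(1+\delta)-\delta-\delta^2/(2+\delta)$ the second derivative is actually
\[
f''(\delta)=\frac{1}{1+\delta}-\frac{8}{(2+\delta)^{3}}=\frac{\delta(\delta^{2}+6\delta+4)}{(1+\delta)(2+\delta)^{3}},
\]
not $\delta^{2}/\bigl((1+\delta)(2+\delta)^{3}\bigr)$; the correct expression is still nonnegative for $\delta\ge 0$, so your convexity argument and the resulting inequality $(1+\delta)\ln(1+\delta)-\delta\ge\delta^{2}/(2+\delta)$ go through unchanged. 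Also, at $\delta=1$ the lower-tail event is $X=0$ rather than vacuous, but $\mathbb{P}[X=0]\le e^{-\mu}\le e^{-\mu/3}$ handles that case trivially.
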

The following lemma ensures that for any  $rn$-vertex $((r-1)n+1)$-regular graph $G$ with a balanced partition  $\{A_1,A_2,\ldots,A_r\}$ (i.e., $|A_1|=\cdots=|A_r|=n$), there exists a large matching within some $G[A_i]$. The proof will be given in the next section.  

\begin{lemma}\label{vertex-cover}
Let $G$ be an $((r-1)n+1)$-regular graph on $rn$ vertices and $\{A_1,A_2,\ldots,A_r\}$ be a balanced partition of $V(G)$. Suppose that $C_i$ is a minimum vertex cover of $G[A_i]$ for each $i\in [r]$. Then $\max\{|C_i|:i\in [r]\}\geq \frac{1}{r}\sqrt{n}$.
\end{lemma}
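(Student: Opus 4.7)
The plan is to argue by contradiction: suppose $|C_i| < \sqrt{n}/r$ for every $i \in [r]$, and set $I_i := A_i \setminus C_i$, an independent set in $G$ of size $n - |C_i|$. Writing $I := \bigcup_{i \in [r]} I_i$ and $S := \sum_{i} |C_i| = |V(G) \setminus I|$, the assumption gives $S < \sqrt{n}$, and the goal is to contradict this.

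The key idea is to work in the complement graph $\overline{G}$, which is $(n-2)$-regular on $rn$ vertices. Each $I_i$ spans a clique in $\overline G$, so every $v \in I_i$ has exactly $(n-2) - (|I_i| - 1) = |C_i| - 1$ neighbors in $\overline G$ outside $I_i$. With respect to the partition $\{I_1, \ldots, I_r, V(G) \setminus I\}$, let $E_1$ denote the number of edges of $\overline G$ between distinct $I_i$'s, $E_2$ the number between $I$ and $V(G) \setminus I$, and $E_3$ the number within $V(G) \setminus I$. I would then record two double-counting identities,
$$
\sum_i (n - |C_i|)(|C_i| - 1) = 2 E_1 + E_2, \qquad (n-2)\, S = 2 E_3 + E_2,
$$
the first by summing the ``outside $I_i$'' contribution over $v \in I_i$ and over $i$, the second by summing $\overline G$-degrees across $V(G) \setminus I$.

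Setting $T := \sum_i |C_i|^2$ and subtracting the first identity from the second will yield $2(E_3 - E_1) = rn + T - 3 S$. Combining this with the trivial bounds $E_1 \geq 0$ and $E_3 \leq \binom{S}{2}$ gives $S^2 + 2 S \geq rn + T \geq rn$. However, $S < \sqrt{n}$ forces $S^2 + 2 S < n + 2\sqrt{n}$, which is smaller than $rn$ as soon as $(r-1)n > 2 \sqrt{n}$; this closes the contradiction for $r \geq 2$ and $n$ sufficiently large. Any small range not covered by this comparison (only $r = 2$ with $n \leq 4$) is handled directly: regularity forces every vertex of $A_i$ to have at least one neighbor inside $A_i$, so $|C_i| \geq 1 \geq \sqrt{n}/r$ in that range.

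I do not anticipate a serious obstacle. The only step that deserves care is the identity $d_{\overline G}(v, V(G) \setminus I_i) = |C_i| - 1$, which follows at once from $\overline G$ being $(n-2)$-regular and $I_i$ being a clique in $\overline G$; after that, the two double counts and the final quadratic comparison are routine.
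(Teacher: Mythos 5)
Your proof is correct, and it takes a genuinely different route from the paper's. The paper stays in $G$: it pads the smaller minimum vertex covers to a common size (the second largest), lets $B_i := A_i\setminus C_i$, and splits into two cases depending on whether $e(B_1,\bigcup_{j\ne 2}C_j)\le e(C_2,\bigcup_{j\ne 1}B_j)$ or not; in each case it pushes the regularity through a direct edge count to derive $(r-1)x_1^2+2x_1\ge n$ (up to a factor $r-1$ in the second case). Your argument instead passes to $\overline{G}$, which is $(n-2)$-regular, and exploits that each $I_i=A_i\setminus C_i$ is a clique there, so every $v\in I_i$ has exactly $|C_i|-1$ $\overline{G}$-neighbours outside $I_i$. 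The two handshake identities, one over $\bigcup I_i$ and one over $V(G)\setminus I$, subtract to give the single clean relation $2(E_3-E_1)=rn+T-3S$, and then $E_1\ge 0$, $E_3\le\binom{S}{2}$ and $T\ge 0$ immediately yield $S^2+2S\ge rn$, contradicting $S<\sqrt n$. This is case-free and visibly symmetric in the parts, and it actually gives a slightly stronger numerical conclusion (roughly $S\ge(1-o(1))\sqrt{rn}$, hence $\max_i|C_i|\gtrsim\sqrt{n/r}$ rather than $\sqrt n/r$); the paper's split-case argument is more opaque but avoids the complement. Your handling of the small-$n$ tail for $r=2$ via $\delta(G[A_i])\ge 1$ is also correct, though for the paper's purposes $n$ is taken large, so that corner is not essential.
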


\begin{rmk}\label{remark:vertex-cover}
Let $M$ be a maximal matching in a graph $H$. Then $V(M)$ is a vertex cover in $H$. In particular, $|M| \geq |C|/2$ where $C$ is a minimum vertex cover {in $H$}. 
If moreover $H$ has maximum degree $\Delta$,  
then $e(H) \leq \Delta |C|$.
\end{rmk}

We conclude this section by giving the proof of Theorem~\ref{lem:extremal-case}.
\begin{proof}[{\bf Proof of Theorem~\ref{lem:extremal-case}}]
Choose $r\geq 3,n\in \mathbb{N}$ and constants satisfying $$\frac{1}{
n}\ll\gamma\ll \alpha\ll \beta\ll \delta\ll\gamma'\ll \frac{1}{r}.
$$
Assume that $G$ is an $((r-1)n+1)$-regular graph on $rn$ vertices and $G$ contains a $\gamma$-independent set of size $n$.
By Lemma~\ref{lem:construct good-partition}, $G$ admits  an $(\alpha,\beta,\beta,\gamma')$-good partition $\mathcal{P}:=\{A_1,\ldots,A_s,B\}$ for some $s\in [r]$, that is, $\mathcal{P}$ satisfies~\ref{B1}-\ref{B07}. 

Consider the random subset $S\subseteq V(G)$ obtained by including each vertex of $V(G)$ independently with probability $\frac{1}{2}$. Then, w.h.p.~we have {$\de(G[S])\geq \frac{(r-1)n}{2}-n^{0.6}$}. 
Let $\mathcal{P}'=\{A_1\cap S,\ldots,A_s\cap S,B\cap S\}$. By Lemma~\ref{lem:covering bad vertices for balance}, in order to obtain a $K_r$-factor in $G':=G[S]$, it suffices to determine the probability that  $\mathcal{P}'$ is a  $(3\alpha,\frac{\beta}{4},3\beta,\frac{\gamma'}{25})$-good partition of $G'$ under the condition that $r\mid |S|$.  

\begin{claim}\label{claim-prob}
With high probability, $\mathcal{P}'$ satisfies the following properties:
\begin{enumerate}[label=\rmfamily (A\arabic*), start=3]
    \item\label{D1} the number of $((3\alpha)^{1/5},A_i\cap S)$-good vertices is at least $|A_i\cap S|-{6\alpha \frac{|S|}{r}}$ for each $i\in [s]$; 
    \item\label{D3} $d(v,A_i\cap S)\geq {\frac{\beta}{4}\frac{|S|}{r}}$ for each vertex $v\in S\setminus A_i$ with $i\in [s]$;
    \item\label{D4} $d(v,B\cap S)\geq {\frac{\beta}{4}\frac{|S|}{r}}$ for each vertex $v\in S\setminus B$ and $\de(G[B\cap S])\ge {\frac{(r-s-1)|S|}{r}-3\alpha |S|}$;
    \item\label{D5} the number of $((3\alpha)^{1/5},B\cap S)$-exceptional vertices is at most ${3\alpha |S|}$;
    \item\label{D7} 
    $B\cap S$ has no $\frac{\gamma'}{25}$-independent set of size  at least $\frac{|B\cap S|}{r-s}$.
\end{enumerate}
\end{claim}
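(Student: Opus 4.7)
The plan is to verify the five properties \ref{D1}, \ref{D3}, \ref{D4}, \ref{D5}, \ref{D7} separately. The first four reduce to Chernoff bounds on individual vertex-degree counts combined with a union bound over the $O(n)$ vertices of $G$, while the main difficulty, \ref{D7}, requires an argument modeled on Claim~\ref{no-independent}.

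As a setup I would condition on the high-probability events $|S|=\tfrac{rn}{2}\pm n^{0.6}$, $|A_i\cap S|=\tfrac{|A_i|}{2}\pm n^{0.6}$ for each $i\in[s]$, and $|B\cap S|=\tfrac{|B|}{2}\pm n^{0.6}$, all obtained from a single Chernoff application. For \ref{D3} and \ref{D4}, conditions~\ref{B04} and~\ref{B05} give $d_G(v,A_i)\ge\beta n$ and the required lower bound on $\delta(G[B])$; halving these yields expectations that exceed the targets $\tfrac{\beta}{4}\cdot\tfrac{|S|}{r}$ and $\tfrac{(r-s-1)|S|}{r}-3\alpha|S|$ by $\Omega(n)$, so Chernoff plus a union bound suffices. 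For \ref{D1}, each $(\alpha^{1/5},A_i)$-good vertex has $\le\alpha^{1/5}n$ neighbors in $A_i$ and so has expected sampled degree $\le\alpha^{1/5}n/2$, strictly below the target $(3\alpha)^{1/5}|S|/r\approx 3^{1/5}\alpha^{1/5}n/2$, and the at most $2\alpha n\le 6\alpha|S|/r$ bad vertices fit within the slack. A symmetric computation handles \ref{D5}: each non-exceptional $v\in B$ has $\le\alpha^{1/5}n$ non-neighbors in each $A_j$, giving $\le\alpha^{1/5}n/2+o(n)<(3\alpha)^{1/5}|S|/r$ non-neighbors in $A_j\cap S$ w.h.p., while the $\le r\alpha n\le 3\alpha|S|$ exceptional vertices fit within the bound.

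The main obstacle is \ref{D7}, which I plan to handle by closely mirroring Claim~\ref{no-independent}. Apply Lemma~\ref{partition} to $G$ with $\varepsilon=\gamma'/10$ to obtain a partition $\{V_1,\ldots,V_t\}$, and use Chernoff to get $|S\cap V_i|=|V_i|/2\pm n^{0.6}$ for all $i$. Suppose towards contradiction that some $T'\subseteq B\cap S$ of size at least $|B\cap S|/(r-s)$ satisfies $e(G[T'])\le\tfrac{\gamma'}{25}n^2$. I construct $T\subseteq B$ with $|T\cap V_i|=2|T'\cap V_i|\pm 2n^{0.6}$, taking $T\cap V_i\subseteq B\cap V_i$ (which is possible since $|B\cap V_i\cap S|\approx|B\cap V_i|/2$), so that $|T|\ge|B|/(r-s)-O(tn^{0.6})$. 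After extending $T$ if necessary to size $|B|/(r-s)$ (which changes $e(G[T])$ by only $O(n^{1.6})$), condition~\ref{B07} yields $e(G[T])\ge\tfrac{4}{5}\gamma' n^2$, and the Frieze--Kannan approximation transfers this to
\[
e(G[T'])\ge \tfrac{1}{5}e(G[T])-\tfrac{\gamma'}{10}n^2 > \tfrac{\gamma'}{25}n^2,
\]
contradicting the assumption. The only delicate point here is balancing the various $n^{0.6}$ and $\varepsilon n^2$ error terms, but the proportionality argument goes through essentially verbatim from Claim~\ref{no-independent}.
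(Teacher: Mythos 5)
Your proof is correct and follows essentially the same approach as the paper: Chernoff plus union bound for \ref{D1}--\ref{D5}, and a Frieze--Kannan transfer argument for \ref{D7}. The only (cosmetic) difference is that for \ref{D7} you re-derive the regularity argument from scratch; the paper instead invokes Claim~\ref{no-independent} directly with $(G[B],\gamma')$ in place of $(G,\gamma)$, which saves work but relies on the same computation.
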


\begin{proof}
Note that $|S|\sim B(rn,1/2)$, $|A_i\cap S|\sim B(|A_i|,1/2)$ for all $i\in[s]$ {and $|B\cap S|\sim B(|B|,1/2)$}. By Chernoff bound, w.h.p.~$|S|=\frac{rn}{2}\pm n^{0.6}$, $|A_i\cap S|=\frac{|A_i|}{2}\pm n^{0.6}$ for each $i\in [s]$ and $|B\cap S|=\frac{|B|}{2}\pm n^{0.6}$.

(A3) 
Let $v\in A_i$ be an $(\alpha^{1/5},A_i)$-good vertex in $G$ for some $i\in [s]$.  Recall that $d(v,A_i)\leq \alpha^{1/5} n$. 
Then by Chernoff bound, w.h.p.~we have $$d(v,A_i\cap S)\leq \frac{1}{2}\alpha^{1/5}n+ n^{0.6}\leq (3\alpha)^{1/5}\frac{|S|}{r}.$$ 
Together with~\ref{B03}, there are at least $|A_i\cap S|-{2}\alpha n{\ge |A_i\cap S|-6\alpha\frac{|S|}{r}}$ vertices in $G'$ that are $((3\alpha)^{1/5},A_i\cap S)$-good.\medskip

(A4)-(A5) trivially follow from the Chernoff bound. \medskip

(A6) Let $v\in B$ be a vertex that is not $(\alpha^{1/5},B)$-exceptional in $G$. 
Then $d(v,A_j) > |A_j| - \alpha^{1/5} n$ for every $j \in [s]$. 
By Chernoff bound, w.h.p. we have
\[
d(v,A_j \cap S)\geq  \frac{|A_j| - \alpha^{1/5} n}{2} - n^{0.6} \ge |A_j \cap S| - (3\alpha)^{1/5}\frac{|S|}{r} \ \text{for all } j \in [s].
\]
It follows from~\ref{B06} that the number of $((3\alpha)^{1/5},B \cap S)$-exceptional vertices in $G'$ is at most $r\alpha n\le {3\alpha |S|}$. \medskip

(A7) 
By applying Claim~\ref{no-independent} with $(G[B],\gamma')$ in place of $(G,\gamma)$, we obtain that $G[B\cap S]$ contains no $\frac{\gamma'}{25}$-independent set of size at least $\frac{|B\cap S|}{r-s}$, as desired. 
\end{proof}


Let $\mathbf{E}_0$ be the event that~\ref{D1}-\ref{D7} hold, and let $\mathbf{E}$ be the event that $\mathcal{P}'$ is a $(3\alpha,\frac{\beta}{4},3\beta,\frac{\gamma'}{25})$-good partition of $G'$. 
By Claim~\ref{claim-prob}, we have $\mathbb{P}[\mathbf{E}_0]\geq 1-\frac{1}{n}$. 
To compute $\mathbb{P}[\mathbf{E}]$, it remains to estimate {the probability that~\ref{B1} and~\ref{B2} hold}. 

Next, we will restrict the size of each part so that they fall into a prescribed interval with a positive probability.
This ensures that every large part \( A_i \cap S \) of \(\mathcal{P}'\) has a matching of size at least \( |A_i \cap S| - \frac{|S|}{r} + r \). Define $k_i:=|A_i|-n$ for each $i\in [s]$.  
We proceed by considering the following  three cases based on the value of $\max_{i\in [s]}\{k_i\}$.

\medskip
{\bf Case 1. $\max_{i\in [s]}\{k_i\}>\delta \sqrt{n}$.}
\medskip

Without loss of generality, assume that $k_1:=\max_{i\in [s]}\{k_i\}$. By Lemma~\ref{lem-int}, 
we have 
\[
\mathbb{P}\Big[\mathbf{E_1}:\frac{|A_1|}{2}+{\sqrt{|A_1|}}\leq |A_1\cap S|\leq \frac{|A_1|}{2}+5{\sqrt{|A_1|}}\Big]
\footnote{which differs by at most $\delta$ from $$
\int_{2}^{10}\frac{1}{\sqrt{2\pi}}e^{-t^2/2}\,dt=\Phi(10)-\Phi(2)\geq \frac{1}{50},
$$ 
where we take Taylor expansion {$\Phi(z)=\frac{1}{2}+\frac{1}{\sqrt{2\pi}}\sum_{k=0}^{\infty}\frac{(-1)^kz^{2k+1}}{k!\cdot 2^k\cdot(2k+1)}$}.}\ge \frac{1}{100}.\]
Similarly, for each $i\in [2,s]$ we 
have 
$$
\mathbb{P}\Big[\mathbf{E}_i:\frac{|A_i|}{2}-\frac{1}{2r}{\sqrt{|A_i|}}\leq |A_i\cap S|\leq \frac{|A_i|}{2}+\frac{1}{2r}{\sqrt{|A_i|}}\Big]\geq \frac{1}{2r},
$$ 
and
\[
\mathbb{P}\Big[\mathbf{E}_{s+1}:\frac{|B|}{2}-\frac{1}{2r}{\sqrt{|B|}}\leq |B\cap S|\leq \frac{|B|}{2}+\frac{1}{2r}{\sqrt{|B|}}\Big]\geq \frac{1}{2r}.
\]
Therefore, conditioned on all events $\mathbf{E}_i$ for $i\in [s+1]$, 
we apply the Cauchy-Schwarz inequality to obtain   
\begin{align*}
|S|&=\sum_{i\in [s]}|A_i\cap S|+|B\cap S|\geq \frac{rn}{2}+{\sqrt{|A_1|}-\frac{1}{2r}\sum_{i\in [2,s]}\sqrt{|A_i|}-\frac{1}{2r}\sqrt{|B|}}\\
&=\frac{rn}{2}+{\frac{1}{2r}\Big((2r+1)\sqrt{|A_1|}-\sum_{i\in [s]}\sqrt{|A_i|}-\sqrt{|B|}\Big)}\\
&\geq 
\frac{rn}{2}+{\frac{1}{2r}\Big((2r+1)\sqrt{|A_1|}-r\sqrt{n}\Big)}\\
&\geq\frac{rn}{2}+\frac{r+1}{2r}\sqrt{|A_1|},  
\end{align*}
and 
$$
|S|=\sum_{i\in [s]}|A_i\cap S|+|B\cap S|\leq \frac{rn}{2}+5\sqrt{|A_1|}+\frac{r-1}{2r}\sqrt{|A_1|}\le\frac{rn}{2}+6\sqrt{|A_1|}.
$$
Hence, 
\begin{align}\label{A1capS}
|A_1\cap S|-\frac{|S|}{r}\le\frac{|A_1|}{2}+5\sqrt{|A_1|}-\frac{n}{2}-\frac{r+1}{2r^2}\sqrt{|A_1|}\le\frac{|A_1|-n}{2}+5\sqrt{|A_1|}\leq 3\alpha \frac{|S|}{r};
\end{align}
for each $i\in [2,s]$, we have 
\begin{align}\label{AcapS}
|A_i\cap S|-\frac{|S|}{r}\le\frac{|A_i|}{2}+\frac{1}{2r}\sqrt{|A_i|}-\frac{n}{2}-\frac{r+1}{2r^2}\sqrt{|A_1|}\leq \frac{|A_i|-n}{2}-\frac{1}{2r^2}\sqrt{|A_1|}\leq 3\alpha \frac{|S|}{r}.
\end{align}
Similarly, 
\[
-3\alpha \frac{|S|}{r}\leq |B\cap S|-{(r-s)\frac{|S|}{r}}<3\alpha \frac{|S|}{r}.
\]
That is,~\ref{B1} holds. Fix an $i\in [s]$ with $|A_i\cap S|\geq \frac{|S|}{r}$. If \(i=1\), then the assumption \(\max_{i\in[s]}\{k_i\} > \delta\sqrt{n}\) yields \(|A_1|\ge n+\delta\sqrt{n}\);  if \(i\in[2,s]\), then by~\eqref{AcapS} we have \(|A_i|\ge n+\frac{1}{r^2}\sqrt{n}\). 
Recall that $\Delta(G[A_i])\leq \beta n$. Then, w.h.p.~we have $\Delta(G[A_i\cap S])\leq \Delta(G[A_i])\leq \beta n\leq 3\beta \frac{|S|}{r}$.  By Remark~\ref{remark:vertex-cover} and {the condition that} $G$ is $((r-1)n+1)$-regular, we conclude that the matching number of $G[A_i]$ is at least $$\frac{e(G[A_i])}{2\beta n}\geq \frac{\delta(G[A_i])|A_i|}{4\beta n}\geq \frac{(|A_i|-n+1)|A_i|}{4\beta n}\geq \frac{|A_i|-n+1}{5\beta}.$$ 
By Chernoff bound, {\eqref{A1capS}-\eqref{AcapS} and $\beta\ll \delta$}, w.h.p.~we have that $G[A_i\cap S]$ contains a matching of size at least $\frac{|A_i|-n}{21\beta}>|A_i\cap S|-\frac{|S|}{r}+r$, thus~\ref{B2} holds.
It follows that  $${\mathbb{P}[\mathbf{E}]\geq \mathbb{P}\big[\mathbf{E}_0\mathbf{E}_1\mathbf{E}_2\cdots\mathbf{E}_{s+1} \big]\geq\mathbb{P}\Big[{\bigcap_{i\in [s+1]}\mathbf{E}_i }\Big]+
\mathbb{P}[\mathbf{E}_0 ]-1\ge \frac{1}{100(2r)^{r-1}}-\frac{1}{n}.}
$$ 


\medskip
{\bf Case 2.} 
$\max_{i\in [s]}\{k_i\}\leq \de \sqrt{n}$ and $s=r$.
\medskip

We consider any balanced partition $\{A_1^*,A_2^*,\ldots,A_r^*\}$ obtained from $\{A_1,A_2,\ldots,A_r\}$ by moving less than $\de \sqrt{n}$ vertices from each large part to small parts. Let $C_i$ be a minimum vertex cover of $G[A_i^*]$ for each $i\in [r]$. By Lemma~\ref{vertex-cover}, we have $\max\{|C_1|,\ldots,|C_r|\}\geq \frac{1}{r}\sqrt{n}$. Without loss of generality, assume that $|C_1|\geq \frac{1}{r}\sqrt{n}$. 
As in Remark~\ref{remark:vertex-cover}, $G[A_1^*]$ has a matching $M$ of size at least $\frac{\sqrt{n}}{2r}$. By Chernoff bound, w.h.p.~$M[S]$ contains at least $\frac{2\sqrt{n}}{17r}$ edges. Observe that at most $\de \sqrt{n}$ edges in $M[S]$ contain a {vertex that is moved from a large part to $A_1$}, so this gives a matching of size at least $\frac{\sqrt{n}}{9r}$ in ${G'}[A_1\cap S]$. By Lemma~\ref{lem-int}, 
we have 
\begin{align*}\label{eq:the size of A1}
&\ \mathbb{P}\Big[\mathbf{E}_1:\frac{|A_1|}{2}+ \frac{1}{20r}\sqrt{{|A_1|}}\leq |A_1\cap S|\leq \frac{|A_1|}{2}+\frac{1}{10r}\sqrt{{|A_1|}}\Big]\\
=&\ \mathbb{P}\Big[\frac{|A_1|}{2}+\frac{1}{20r}\sqrt{{|A_1|}}\leq B(|A_1|,1/2)\leq \frac{|A_1|}{2}+\frac{1}{10r}\sqrt{{|A_1|}}\Big]\\
\geq&\  \Phi\Big(\frac{1}{5r}\Big)-\Phi\Big(\frac{1}{10r}\Big)-\de\geq \frac{1}{30r}.
\end{align*}
Similarly, for each $i\in [2,r]$ we 
have 
\begin{align*}
\mathbb{P}\Big[\mathbf{E}_i:\frac{|A_i|}{2}-\frac{1}{40r^2}\sqrt{{|A_i|}}\leq |A_i\cap S|\leq \frac{|A_i|}{2}+\frac{1}{40r^2}\sqrt{{|A_i|}}\Big]\geq \frac{1}{40r^2}.
\end{align*}
Therefore, conditioned on all events $\mathbf{E}_i$ for $i\in [r]$, by the Cauchy-Schwarz inequality we have  
\begin{align*}
|S|&=\sum_{i\in [r]}|A_i\cap S|\ge \frac{rn}{2}+\frac{1}{20r}\sqrt{{|A_1|}}-\frac{1}{40r^2}\sum_{i\in [2,r]}\sqrt{|A_i|}\\
&= \frac{rn}{2}+\frac{1}{40r^2}\Big((2r+1)\sqrt{|A_1|}-\sum_{i\in [r]}\sqrt{|A_i|}\Big)\\
&\ge \frac{rn}{2}+\frac{1}{40r^2}\left((2r+1)\sqrt{|A_1|}-r\sqrt{n}\right).
\end{align*}
Note that $\max_{i\in [s]}\{k_i\}\leq \de \sqrt{n}$, {which implies that $|A_i|=n\pm r\de\sqrt{n}$ for each $i\in [r]$. As $\de\ll\frac{1}{r}$,} for each $i\in [2,r]$, we have 
\begin{align*}
|A_i\cap S|-\frac{|S|}{r}&\le\frac{|A_i|}{2}+\frac{1}{40r^2}\sqrt{|A_i|}-\frac{n}{2}-\frac{1}{40r^3}\left((2r+1)\sqrt{|A_1|}-r\sqrt{n}\right)\\
&\leq \frac{\delta\sqrt{n}}{2}-\frac{1}{40r^3}\left(2r\sqrt{|A_1|}-r\sqrt{n}\right)<0.
\end{align*}
Similarly, 
$$
|A_1\cap S|-\frac{|S|}{r}<\frac{\sqrt{|A_1|}}{10r}<\frac{2\sqrt{n}}{19r}<3\alpha \frac{|S|}{r}. 
$$
This implies that~\ref{B1} holds and $A_i\cap S$ is a small part of $\mathcal{P}'$ for each $i\in [2,r]$. 
Recall that $G[A_1\cap S]$ contains a matching of size at least $\frac{\sqrt{n}}{9r}$, 
which is larger than $|A_1\cap S|-\frac{|S|}{r}+r$. Thus~\ref{B2} holds. It follows that  $${\mathbb{P}[\mathbf{E}]\geq \mathbb{P}\big[\mathbf{E}_0\mathbf{E}_1\mathbf{E}_2\cdots\mathbf{E}_r \big]\geq \mathbb{P}\Big[{\bigcap_{i\in [r]}\mathbf{E}_i}\Big]+
\mathbb{P}[\mathbf{E}_0 ]-1\ge  \frac{1}{30r(40r^2)^{r-1}}-\frac{1}{n}.}$$

\medskip
{\bf Case 3.} $\max_{i\in [s]}\{k_i\}\leq \de\sqrt{n}$ and $s\leq r-1$.
\medskip

By a similar discussion as in Case~2, for each $i\in [s]$ we have 
\[
\mathbb{P}\Big[\mathbf{E}_i:\frac{|A_i|}{2}-\frac{1}{40r^2}\sqrt{|A_i|}\leq |A_i\cap S|\leq \frac{|A_i|}{2}+\frac{1}{40r^2}\sqrt{|A_i|}\Big]\geq \frac{1}{40r^2},
\]
and
$$
\mathbb{P}\Big[\mathbf{E}_{s+1}:\frac{|B|}{2}+\frac{1}{20r}\sqrt{|B|}\leq |B\cap S|\leq \frac{|B|}{2}+\frac{1}{10r}\sqrt{|B|}\Big]\ge \frac{1}{30r}.
$$
Thus, conditioned on all events $\mathbf{E}_i$ for $i\in [s+1]$, by a direct calculation we have 
$$
|A_i\cap S|-\frac{|S|}{r}<0\ \ \text{for each}\ i\in [s]$$
and
$$-3\alpha \frac{|S|}{r}\leq |B\cap S|-{(r-s)\frac{|S|}{r}}<3\alpha \frac{|S|}{r}. 
$$
It follows that~\ref{B1} holds and~\ref{B2} becomes trivial, while 
$${\mathbb{P}[\mathbf{E}]\geq \mathbb{P}\big[\mathbf{E}_0\mathbf{E}_1\mathbf{E}_2\cdots\mathbf{E}_{s+1} \big]\geq \mathbb{P}\Big[{\bigcap_{i\in [s+1]}\mathbf{E}_i }\Big]+
\mathbb{P}[\mathbf{E}_0 ]-1\ge  \frac{1}{30r(40r^2)^{r-1}}-\frac{1}{n}.}$$


\medskip
Combining Cases~1-3, we derive $\mathbb{P}[\mathbf{E}]\geq \frac{1}{35r(40r^2)^{r-1}}$.  Notice that $\mathbb{P}[|S|\equiv0~(\text{mod}~r)]=\frac{1}{r}$. Together with Lemma~\ref{lem:covering bad vertices for balance}, we obtain that $G[S]$ has a $K_r$-factor with probability at least $\frac{1}{(40r^2)^r}$. 
\end{proof}

\section{Proof of Lemmas}\label{sec:lemma}
In this section, we give the proofs of Lemma~\ref{lem:construct good-partition}, Lemma~\ref{lem:covering bad vertices for balance} and Lemma~\ref{vertex-cover}, respectively. 
\subsection{Proof of Lemma~\ref{lem:construct good-partition}}

\begin{proof}[{\bf Proof of Lemma~\ref{lem:construct good-partition}}]
Choose constants $r\geq 3,n\in \mathbb{N}$ and $\gamma_1,\gamma_2,\dots,\gamma_r$ such that 
\begin{align*}
    \frac{1}{n}\ll \gamma \ll \gamma_1\ll \dots \ll \gamma_r\ll\frac{1}{r}.
\end{align*}
Suppose that $G$ is an $rn$-vertex graph with $\delta(G)\geq (r-1)n+1$ and $G$ contains a $\gamma$-independent $n$-set. We proceed to greedily choose a maximal family of vertex-disjoint $n$-sets $A_1^0, \dotsc, A_s^0$ such that each $A_i^0$ is $\gamma_i$-independent.
Let $B^0 := V(G) \setminus \bigcup_{i \in [s]} A_i^0$ and denote $\mathcal{P}_0:=\{A_1^0,\ldots,A_s^0,B^0\}$.
Note that $G[B^0]$ has no $\gamma_{s+1}$-independent $n$-set and $|B^0|=(r-s)n$.

We additionally choose 
\begin{align*}
    \gamma_s\ll \alpha \ll\beta\ll \gamma_{s+1}.
\end{align*}
It is easy to see that for distinct $i,j\in [s]$, any vertex $v\in V(G)\setminus (A_i^0\cup A_j^0)$ cannot be both $(2\beta,A_i^0)$-bad and $(2\beta,A_j^0)$-bad. Next, we estimate the number of bad or exceptional vertices with respect to the partition $\mathcal{P}_0$. 
\begin{claim}\label{fact:bad vertices for Ai}
For each $i\in [s]$, the following hold:
\begin{enumerate}[label=(\arabic*)]
    \item\label{large-degree vtx} the number of $(\alpha^{1/4},A_i^0)$-good vertices in $A_i^0$ is at least $|A_i^0|-\alpha n$;
    \item\label{bad vtx2} the number of {$(2\beta,A_i^0)$}-bad vertices in $V(G)\setminus A_i^0$ is at most ${\alpha}n$;
    \item\label{bad vtx3} the number of {$(2\beta,B^0)$}-bad vertices in $V(G)\setminus B^0$ is at most $(r-1){\alpha} n$ if $s=r-1$, and there is no $({2\beta},B^0)$-bad vertex in $V(G)\setminus B^0$ if $s<r-1$;
    \item\label{bad vtx4} the number of $(\alpha^{1/4},B^0)$-exceptional vertices in $B^0$ is at most $\alpha n$.
\end{enumerate}
\end{claim}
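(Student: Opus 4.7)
The plan is to prove the four parts in order from the $\gamma_i$-independence of each $A_i^0$ combined with the regularity $\delta(G) \ge (r-1)n+1$. Parts (1), (2), and (4) will follow from edge double-counts, while part (3) in the case $s = r-1$ feeds on part (1).

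First, for (1), $\sum_{v \in A_i^0} d(v, A_i^0) = 2e(G[A_i^0]) \le 2\gamma_i n^2$, so Markov's inequality shows that at most $\alpha n$ vertices in $A_i^0$ have internal degree exceeding $\alpha^{1/4} n$ (using $\gamma_i \ll \alpha$, so $2\gamma_i < \alpha^{5/4}$). For (2), the regularity gives the lower bound $e(A_i^0, V(G) \setminus A_i^0) \ge n((r-1)n + 1) - 2\gamma_i n^2$, while if $b$ vertices outside $A_i^0$ are $(2\beta, A_i^0)$-bad, the upper bound $2\beta n \cdot b + ((r-1)n - b) \cdot n = (r-1)n^2 - b(1-2\beta)n$ holds. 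Combining gives $b(1-2\beta)n \le 2\gamma_i n^2$, whence $b \le \alpha n$.

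For (3), when $s \le r - 2$ any $(2\beta, B^0)$-bad vertex $v \in V(G) \setminus B^0$ would satisfy $d(v, V(G) \setminus B^0) \ge (r-1)n + 1 - 2\beta n > sn = |V(G) \setminus B^0|$, which is impossible, so no bad vertex exists. For $s = r - 1$ a direct edge-count between $B^0$ and its complement is too weak (it yields only $b = O(n)$, since we have no nontrivial upper bound on $e(G[B^0])$ beyond $\binom{n}{2}$). Instead I would use (1): every $(\alpha^{1/4}, A_j^0)$-good vertex $v \in A_j^0$ has $d(v, V(G) \setminus A_j^0) \ge (r-1)n + 1 - \alpha^{1/4} n$, and being simultaneously $(2\beta, B^0)$-bad would force
\[
d\!\left(v, \bigcup_{k \ne j} A_k^0\right) \;\ge\; (r-1)n + 1 - \alpha^{1/4} n - 2\beta n \;>\; (r-2)n,
\]
contradicting $\bigl|\bigcup_{k \ne j} A_k^0\bigr| = (r-2)n$. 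Hence $(2\beta, B^0)$-bad vertices in $A_j^0$ lie among the at most $\alpha n$ non-good ones from (1), and summing over $j \in [r-1]$ gives the required bound $(r-1)\alpha n$.

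For (4), fix $j \in [s]$ and let $E_j := \{ v \in B^0 : d(v, A_j^0) \le n - \alpha^{1/4} n \}$. Stripping cross-edges from $e(A_j^0, V(G) \setminus A_j^0) \ge n((r-1)n + 1) - 2\gamma_j n^2$ to the other $A_k^0$ yields
\[
e(A_j^0, B^0) \;\ge\; (r-s)n^2 + n - 2\gamma_j n^2,
\]
while the matching upper bound $e(A_j^0, B^0) \le |E_j|(n - \alpha^{1/4} n) + (|B^0| - |E_j|)n = (r-s)n^2 - |E_j| \alpha^{1/4} n$ gives $|E_j| \le 2\gamma_j n / \alpha^{1/4}$. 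Summing over $j \in [s]$ and using $\gamma_s \ll \alpha$ (so $2s\gamma_s \le \alpha^{5/4}$) bounds the total number of exceptional vertices by $\alpha n$. I expect the main obstacle to be (3) when $s = r - 1$, where the naive edge-count between $B^0$ and its complement is insufficient and one must route through item (1) to rule out large $B^0$-degree deficiencies inside each $A_j^0$.
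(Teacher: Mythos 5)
Your proposal is correct and follows essentially the same route as the paper for all four parts: Markov on the degree sum inside $A_i^0$ for (1), a non-edge double-count between $A_i^0$ and its complement for (2), the crude volume bound for $s \le r-2$ and routing through (1) when $s = r-1$ for (3), and a cross-edge double-count between $A_j^0$ and $B^0$ (peeling off the at most $(s-1)n^2$ edges to the other $A_k^0$) for (4). Your explicit observation that the naive edge-count fails for (3) when $s=r-1$ because $e(G[B^0])$ has no nontrivial upper bound is a nice diagnosis of why the paper's argument is the one needed there.
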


\begin{proof}
(1) Recall that $A_i^0$ is a $\gamma_i$-independent set of size $n$ for each $i\in [s]$. As $\gamma_i\ll \alpha$, we obtain that the number of vertices in $A_i^0$ that are not $(\alpha^{1/4},A_i^0)$-good is at most 
$
\frac{2\gamma_i n^2}{\alpha^{1/4}n}\leq \alpha n$,
as desired.\medskip

(2) Since {$\delta(G)\geq (r-1)n+1$} and $A_i^0$ is a $\gamma_i$-independent set, the number of edges in $G[A_i^0,V(G)\setminus A_i^0]$ is at least
$$\underset{v\in A_i^0}{\sum}d(v)-2e(G[A_i^0])\geq 
|A_i^0| \cdot ((r-1)n+1)-2 e(G[A_i^0]) \ge |A_i^0||V(G)\setminus A_i^0|-2\gamma_i n^2.
$$
This implies that there are at most $2\gamma_i n^2$ non-edges between $A_i^0$ and $V(G)\setminus A_i^0$.
Note that every $(2\beta,A_i^0)$-bad vertex in $V(G)\setminus A_i^0$ contributes at least $(1-2\beta) n$ non-edges between $A_i^0$ and $V(G)\setminus A_i^0$.
As $\gamma_i \ll \alpha \ll \beta$,  we have that the number of $(2\beta,A_i^0)$-bad vertices in $V(G)\setminus A_i^0$ is at most 
$\frac{2\gamma_i n^2}{(1-2\beta) n}\le{\alpha}n$. \medskip

(3) If $s< r-1$, then $d(v,B^0)\ge |B^0|-n+1\ge n+1$ for every $v\in V(G)$. Hence, $G$ contains no $(2\beta,B^0)$-bad vertex. If $s=r-1$ and $v\in A_i^0$ is $(2\beta,B^0)$-bad, then $v$ cannot be an $(\alpha^{1/4},A_i^0)$-good vertex. Together with (1), we conclude that the number of $(2\beta,B^0)$-bad vertices is at most $(r-1){\alpha} n.$\medskip


(4) For each $i\in [s]$, let $B_i\<B^0$ be the set of all $(\alpha^{1/4},B^0)$-exceptional vertices $v$ such that $d(v,A_i^0)\le |A_i^0|-\alpha^{1/4}n$ and $B_e=\bigcup_{i\in [s]}B_i$. Now we consider the size of $B_i$.
Note that $|B^0|=(r-s)n$.
Thus 
\[
e(G[A_i^0,B^0])\le |B_i|\cdot \big(|A_i^0|-{\alpha^{1/4} n}\big)+\big((r-s)n-|B_i|\big)\cdot|A_i^0|=|A_i^0|(r-s)n-|B_i|\alpha^{1/4}n.
\]
Recall that $\delta(G)\geq (r-1)n+1$ and $A_i^0$ is a $\gamma_i$-independent set. Then 
\[
2\gamma_in^2\geq 2e(G[A_i^0])\ge|A_i^0|\cdot\big((r-1)n+1-(s-1)n\big)-e(G[A_i^0,B^0])\ge |A_i^0|+|B_i|\alpha^{1/4}n=n+|B_i|\alpha^{1/4}n,
\]
which implies $|B_i|\leq \frac{\alpha}{r} n$ as $\gamma_i\ll \alpha\ll \frac{1}{r}$. Thus 
$
|B_e|\leq \sum_{i\in [s]}|B_i|\le \alpha n.$ 
\end{proof}

Now, we move all $(2\beta, D^0)$-bad vertices from $V(G)\setminus D^0$ to $D^0$ for each $D^0\in \mathcal{P}_0$, and denote the resulting partition by $\mathcal{P}_1:=\{A_1^1,\ldots,A_s^1,B^1\}$. {As $\alpha\ll\beta$, Claim~\ref{fact:bad vertices for Ai} (2) implies} that every vertex in $D^1\in \mathcal{P}_1$ has at least {$\frac{3}{2}\beta n$} neighbors in each $D'$ with $D'\in \mathcal{P}_1\setminus \{D^1\}$. By Claim~\ref{fact:bad vertices for Ai} (2)-(3), one has $|A_i^1|\leq n+\alpha n$ for each $i\in [s]$ and $|B^1|\leq (r-s)n+(r-1)\alpha n$. 


Recall that for each $i \in [s]$, a part $A_i^1$ is called {large} if $|A_i^1| > n$, and {small} otherwise. 
Next, we will move vertices in large parts to satisfy~\ref{B2}.  
Let $A_i^1$ be a large part. 
If $\Delta(G[A_i^1])\leq \beta n$, then we are done. Otherwise, move a vertex in \(A_i^1\) which has at least \( \beta n \) neighbors in \(A_i^1\) to any part \(A_j^1\) with $|A_j^1|<n$, or $B^1$ if $|B^1|<(r-s)n$, and update \(\mathcal{P}^1\) accordingly. 
Repeat this until we reach the point {where} either $|A_j^1|=n$ for each $j\in [s]$ and $|B^1|=(r-s)n$, or $G[A_i^1]$ has maximum degree at most $\beta n$ whenever $|A_i^1|>n$ for $i\in [s]$; denote the resulting partition by $\mathcal{P}:=\{A_1,\ldots,A_s,B\}$. 
Obviously,~\ref{B1} holds. Moreover,  each vertex has  at least $(r-s-1)n-r\alpha n$ neighbors in $B$ and every vertex in $D$ has at least $\beta n$ neighbors in each $D'$ with $D'\in \mathcal{P}\setminus \{D\}$, that is,~\ref{B04}-\ref{B05} hold.

Recall that $\de(G)\geq (r-1)n+1$. Then $\de(G[A_i])\geq |A_i|-n+1$ when $|A_i|\geq n$ with $i\in [s]$.  
If $|A_i|>n$, then together with $\Delta(G[A_i])\leq \beta n$, Remark~\ref{remark:vertex-cover} implies that $G[A_i]$ has a matching of size at least
$$
\frac{\de(G[A_i])|A_i|}{4\Delta(G[A_i])}\geq \frac{(|A_i|-n+1)|A_i|}{4\beta n}\geq \frac{|A_i|-n+1}{5\beta}\ge |A_i|-n+r.
$$
That is,~\ref{B2} holds. 

{By Claim~\ref{fact:bad vertices for Ai} (1)-(2) we obtain~\ref{B03}, and by (3)-(4) we obtain~\ref{B06}. Recall that 
$B^0$ contains no $\gamma_{s+1}$-independent $n$-set and $\alpha\ll\gamma_{s+1}$. 
Let $B'$ be a subset of $B$ with size $\lceil\frac{|B|}{r-s}\rceil$. If $|B'\cap B^0|\geq n$, then $e(G[B'])\geq e(G[B'\cap B^0])\geq \gamma_{s+1}n^2$.  If $|B'\cap B^0|< n$, then there is a subset $B''$ such that  $B'\cap B^0\subseteq B''\subseteq B^0$ with $|B''|=n$. Therefore,  
$$
e(G[B'])\geq e(G[B''])-|B''\setminus B'||B''|\geq \gamma_{s+1}n^2-r\alpha n\cdot n\geq \frac{\gamma_{s+1}}{2}n^2.
$$
That is, $G[B]$ contains no  $\frac{\gamma_{s+1}}{2}$-independent set of size at least $\frac{|B|}{r-s}$.  Hence,~\ref{B07} holds by taking $\gamma':=\frac{\gamma_{s+1}}{2}$. Thus, $\mathcal{P}$ is an  $(\alpha,\beta,\beta,\gamma')$-good partition of $G$, as desired.} 
\end{proof}




\subsection{Proof of Lemma~\ref{lem:covering bad vertices for balance}}

Keevash and Mycroft \cite{KM-multi} proved the following multipartite version of Hajnal-Szemer\'edi theorem, which provides the best possible degree condition that guarantees a transversal $K_r$-factor in a balanced $r$-partite graph.

\begin{lemma}[\cite{KM-multi}]\label{lem:HS thm in multipartite graph} 
Given $r \in \mathbb{N}$ there exists an $n_0 \in \mathbb{N}$ such that the following holds. Suppose $G$ is an $r$-partite graph with vertex classes $V_1, \dots, V_r$ where $|V_i| = n \geq n_0$ for all $1 \leq i \leq r$. If
\[
\min_{i\in [r]}\min_{v\in V_i}\min_{j\in [r]\setminus \{i\}} |N(v)\cap V_j| \geq \Big(1 - \frac{1}{r}\Big)n + 1,
\]
then $G$ contains a $K_r$-factor.
\end{lemma}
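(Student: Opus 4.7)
The plan is to prove the multipartite Hajnal--Szemer\'edi theorem by the standard absorbing-plus-almost-tiling framework, combined with a stability dichotomy to deal with the tightness of the degree condition. Write $V(G)=V_1\cup\cdots\cup V_r$ with $|V_i|=n$ and $\delta^*(G):=\min_{i\ne j}\min_{v\in V_i}|N(v)\cap V_j|\ge (1-\tfrac1r)n+1$. The output is a spanning transversal $K_r$-tiling.

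First I would establish the \emph{absorbing step}. For each balanced transversal $r$-tuple $T=(v_1,\dots,v_r)$ with $v_i\in V_i$, define an \emph{absorber} to be a set $X_T$ of $r(r-1)$ vertices (with $r-1$ vertices from each $V_i$) such that both $G[X_T]$ and $G[X_T\cup T]$ admit $K_r$-factors; the simplest construction picks $r-1$ disjoint transversal $K_r$-copies each of which shares exactly one vertex with $T$ and can be rebalanced after swapping in $T$. Under the degree hypothesis, a direct greedy extension (choosing the $r-1$ cliques one by one, at each stage using that every vertex has at least $(1-\tfrac1r)n+1$ neighbours in each other class, so that every $k$-tuple with $k<r$ has a common neighbour in every remaining class) produces $\Omega(n^{r(r-1)})$ absorbers for each $T$. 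A random sampling argument of R\"odl--Ruci\'nski--Szemer\'edi then yields a small transversal set $U\subseteq V(G)$ with $|U\cap V_i|=o(n)$ such that for every balanced family $\mathcal{T}$ of transversal tuples in $V(G)\setminus U$ with $|\mathcal{T}|\le |U|/(r^2)$, the graph $G[U\cup V(\mathcal{T})]$ contains a transversal $K_r$-factor.

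Next comes the \emph{almost-perfect tiling step}. Apply the multipartite version of Szemer\'edi's regularity lemma to $G-U$, producing an $\varepsilon$-regular partition that refines the $V_i$. In the reduced multipartite graph, the inherited minimum partite-degree is still at least $(1-\tfrac1r-\eta)$ times the cluster size, so one can find an almost perfect fractional $K_r$-tiling by an LP/Haxell-type argument; converting this (via the Blow-up Lemma or direct random greedy) gives a transversal $K_r$-tiling of $G-U$ covering all but $\eta n$ vertices of each class. The leftover is then absorbed by $U$ using the absorbing property, producing a $K_r$-factor.

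The main obstacle is that the absorber count can fail in graphs close to the extremal configurations of the multipartite Hajnal--Szemer\'edi theorem: balanced complete $r$-partite blow-ups with a slight imbalance inside one class, or ``space-barrier''/``divisibility-barrier'' examples with only $(1-\tfrac1r)n$ partite-degree. I would therefore split into cases via a stability dichotomy. In the \emph{non-extremal case}, where $G$ is $\gamma$-far from every extremal configuration, supersaturation guarantees the $\Omega(n^{r(r-1)})$ absorbers needed above and the plan goes through. In the \emph{extremal case}, where $G$ is $\gamma$-close to such a configuration, one argues directly: a stability analysis produces a good partition of $V(G)$ aligned with the near-extremal structure, and the extra ``$+1$'' in the degree condition is then leveraged to eliminate the potential divisibility/space barrier and build the $K_r$-factor by hand (matching a few exceptional vertices first, then applying the non-extremal argument to the remainder). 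Handling the tightness at the boundary between the two cases, and making the absorber construction robust enough to cope with the imbalance inherent in the near-extremal configurations, is the delicate part of the proof.
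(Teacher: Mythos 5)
The paper does not prove this lemma at all: it is imported verbatim as a known theorem of Keevash and Mycroft, and the reference \texttt{[KM-multi]} is doing all the work. So there is no ``paper's proof'' to compare your argument against; what you have written is a proposal for an independent proof of the multipartite Hajnal--Szemer\'edi theorem, which is a substantial theorem in its own right rather than a lemma one re-derives in passing.

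As an outline of one standard route (absorbers, multipartite regularity, stability dichotomy), your sketch is sensible at a high level, and your absorber calibration ($r(r-1)$ vertices, $r-1$ per class, so that $X_T$ and $X_T\cup T$ can both be tiled transversally) is the right dimensional bookkeeping. But the genuinely hard content is precisely the part you compress into a single sentence: ``a stability analysis produces a good partition \ldots\ and the extra $+1$ is then leveraged \ldots.'' The degree condition $(1-\tfrac1r)n+1$ sits exactly at the divisibility barrier, so every extremal configuration (e.g.\ each $V_i$ split into sub-parts of size roughly $n/r$ with a complete multipartite pattern across the splits that forces parity constraints) defeats the naive absorbing count, and the entire difficulty of the theorem is an exhaustive structural analysis of what ``close to extremal'' means in the $r$-partite setting, followed by a bespoke tiling argument in each case. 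That analysis is not a minor cleanup; in the literature it occupies most of the proof. It is also worth noting that Keevash and Mycroft's actual proof does not proceed by the classical absorbing-plus-regularity template you describe: it is a consequence of their geometric theory of hypergraph matchings, which replaces ad hoc stability casework by a systematic lattice-based framework designed to detect and eliminate exactly these divisibility and space barriers. So even granting the high-level plausibility of your sketch, it is both incomplete (extremal case unaddressed) and a genuinely different strategy from the proof the cited result rests on.
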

We will use this to prove that a $K_r$-factor can also be found in an almost balanced partition,  provided that every large part {contains a matching with enough edges.} In fact, this enables us to first pick a few vertex-disjoint copies of $K_r$ each containing exactly one `extra' vertex in a large part (compared to the balanced copies) so as to obtain a balanced $r$-partite subgraph, to which Lemma~\ref{lem:HS thm in multipartite graph} would be applied. 

For any \( i \in [r] \), let \( \mathbf{u}_i\in \mathbb{R}^r \) be the \( i \)-th unit vector, i.e.\ \( \mathbf{u}_i \) has $1$ on the \( i \)-th coordinate and $0$ on the other coordinates.  Let \( \mathbf{1}_r\in \mathbb{R}^r \) be an $r$-vector {with all entries equal to $1$.}
Given a vertex partition $\mathcal{P}=\{V_1,\ldots,V_r\}$ of $G$ and $F\<G$,  the \textit{index vector} of \( F \)  (with respect to $\mathcal{P}$) is \( \mathbf{i}(F) = (x_1, x_2, \ldots, x_{r}) \) where \( x_i = |V(F) \cap V_i| \) for each \( i \in [r] \). 

\begin{lemma}\label{lem:balance}
Let $\frac{1}{n}\ll \alpha \ll \beta \ll\frac{1}{r}$. Suppose that $G$ is a graph on $rn$ vertices with a partition $V(G) =\{A_1, A_2,\ldots, A_r\}$ satisfying 
\begin{enumerate}
[label =\rm  (B\arabic{enumi})]
    \item\label{A1} $ |A_1|\leq \cdots\leq |A_r|\leq n+\alpha n$;
    
    \item\label{A3} for any distinct $i,j \in [r]$ and any $v \in A_i$, we have $d(v, A_j) \ge (1-\beta)|A_j|$;
    \item\label{A2} for each $i \in [r]$, $G[A_i]$ contains a matching $M_i$ of size $k_i:=|A_i|-n$.
\end{enumerate}
{Then $G$ contains a $K_r$-factor $\mathcal{F}$ such that $\bigcup_{i\in [r]}E(M_i)\subseteq E(\mathcal{F})$,  and each $K_r$ in $\mathcal{F}$ satisfies    
\begin{itemize}
    \item either $|V(K_r)\cap A_i|=1$ for all $i\in [r]$;
    \item  or $|E(K_r)\cap E(M_i)|=1$ for exactly one $i\in [r]$ and $|V(K_r)\cap A_j|=1$ for every $j\in [r]\setminus \{i\}$ with $|A_j|\geq n$.
\end{itemize}}
\end{lemma}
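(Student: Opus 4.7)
The plan is to construct the $K_r$-factor $\mathcal{F}$ as a disjoint union $\mathcal{F}_0\cup \mathcal{F}_1$, where $\mathcal{F}_0$ is a small collection of type-(ii) $K_r$'s that absorbs all edges of $\bigcup_{i\in[r]} M_i$ together with the excess vertices of the large parts, leaving the remaining partition exactly balanced, and $\mathcal{F}_1$ is a transversal (type-(i)) $K_r$-factor on the leftover graph, obtained from Lemma~\ref{lem:HS thm in multipartite graph}.

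To set up the accounting, write $k_i=|A_i|-n$ for each $i\in[r]$ and set $I=\{i:k_i>0\}$, $L=\{i:k_i\ge 0\}$, and $K=\sum_{i\in I}k_i$. Since $\sum_i|A_i|=rn$ we have $K=\sum_{j\notin L}(n-|A_j|)$, and one checks that $a_j:=K+k_j\ge 0$ for every $j\in[r]$. I will arrange for $\mathcal{F}_0$ to use exactly $a_j$ vertices of $A_j$ for each $j$, so that every leftover part has size $n-K$. Concretely, for each $i\in I$ and each of the $k_i$ edges $\{u,v\}\in M_i$, $\mathcal{F}_0$ will contain one $K_r$ consisting of $u,v$, one further vertex of $A_j$ for every $j\in L\setminus\{i\}$, and $r-|L|-1$ additional vertices drawn from the small parts $\{A_j:j\notin L\}$; the total small-part demand $\sum_{j\notin L}a_j=(r-|L|-1)K$ exactly matches the number of small-part slots across $\mathcal{F}_0$, so the plan is numerically feasible.

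I then build $\mathcal{F}_0$ greedily, processing the $K$ matching edges one by one: for an edge $e=\{u,v\}\subseteq A_i$, I extend $e$ to a $K_r$ by picking $r-2$ pairwise-adjacent vertices in the prescribed parts, one at a time, avoiding vertices already committed to earlier cliques. By~\ref{A3}, the common neighborhood in the target part $A_k$ of the up to $r-1$ previously chosen vertices of the current clique has size at least $|A_k|-(r-1)\beta|A_k|$; since at most $a_k\le K+\alpha n\le 2r\alpha n$ vertices of $A_k$ have been used across earlier cliques and $\alpha\ll\beta\ll 1/r$, a valid pick always exists.

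Finally, set $G':=G-V(\mathcal{F}_0)$ with parts $A_j':=A_j\setminus V(\mathcal{F}_0)$, so $|A_j'|=n-K$ for every $j\in[r]$. For any $v\in A_i'$ and $j\ne i$,
\[
 d_{G'}(v,A_j')\ge d_G(v,A_j)-K\ge (1-\beta)|A_j|-r\alpha n \ge \Big(1-\tfrac{1}{r}\Big)(n-K)+1,
\]
using $\alpha\ll\beta\ll 1/r$. Lemma~\ref{lem:HS thm in multipartite graph} then yields a transversal $K_r$-factor $\mathcal{F}_1$ of $G'$, and $\mathcal{F}:=\mathcal{F}_0\cup\mathcal{F}_1$ is the desired factor, containing every $M_i$ by construction and with the prescribed local structure. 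The main subtlety, I expect, is the initial accounting for the targets $\{a_j\}$: verifying $a_j\ge 0$ for every $j$ and that the small-part demands sum to $(r-|L|-1)K$ so that the plan is realizable. Once these are pinned down, the greedy construction and the appeal to Lemma~\ref{lem:HS thm in multipartite graph} are essentially routine.
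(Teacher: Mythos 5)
Your proposal is correct modulo one small fix, and it takes essentially the same route as the paper: build a small family of type-(ii) $K_r$'s, one per matching edge, to equalize the part sizes, then apply Lemma~\ref{lem:HS thm in multipartite graph} to the balanced remainder. The paper organizes the balancing in two sequential steps (cliques with index vector $\mathbf{1}_r - \mathbf{u}_i + \mathbf{u}_\ell$ for small $i\ge 2$, then $\mathbf{1}_r - \mathbf{u}_1 + \mathbf{u}_i$ for large $i$), while you do it in one pass with the per-part removal targets $a_j = K + k_j$; the two bookkeepings are equivalent, both removing exactly $a_j$ vertices from $A_j$ and leaving $n-K$ in each part, which is what Claim~\ref{result-balance} verifies.

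One step would fail as literally written. When greedily picking the single-vertex slots for a clique, you say you avoid only "vertices already committed to earlier cliques." You must also avoid the endpoints of yet-unprocessed matching edges — otherwise some $M_j$ edge can lose an endpoint before its turn, and the requirement $\bigcup_{i}E(M_i)\subseteq E(\mathcal{F})$ fails. Since $\bigl|\bigcup_j V(M_j)\bigr| \le 2r\alpha n \ll \beta n$, enlarging the forbidden set by this amount does not hurt the degree count, so the fix is immediate (the paper's phrase "greedy vertex-selection of each $K_r$ starting with an edge" implicitly carries the same caveat). A second, purely expository point: you assert the small-part slot assignment is "numerically feasible" without exhibiting it. Concretely, omit each small part $A_j$ (i.e.\ $j\notin L$) from exactly $-k_j = n-|A_j|$ of the $K$ cliques; this is consistent because $\sum_{j\notin L}(n-|A_j|)=K$ and $n-|A_j|\le K$, and it forces each $A_j$ to contribute precisely $a_j$ vertices in total.
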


\begin{proof}
Suppose that $|A_j|=|A_1|+a_j$ for each $j \in [2, r]$ and $|A_1|=a_1$.
Without loss of generality, {we may assume $A_1,A_2,\ldots,A_p$ are all the parts of size at most $n$ for  some $p\in[r]$}.
If $a_j=0$ for each $j \in [2, r]$, that is, $\{A_1,A_2, \ldots, A_r\}$ is a balanced partition, then Lemma~\ref{lem:HS thm in multipartite graph} implies that $G$ contains a desired $K_r$-factor.
Otherwise, recall that for each $i \in [p+1,r]$, $G[A_i]$ contains a matching $M_i$ of size $k_i:=|A_i|-n$, we proceed in two steps. 
\begin{itemize}
    \item[$(1)$]  For each $i \in [2, p]$, iteratively select $n - |A_i|$ vertex-disjoint copies of $K_r$ in $G$, {each of which} has an index vector $\mathbf{1}_r - \mathbf{u}_i + \mathbf{u}_\ell$ for some $\ell\in[p+1, r]$,  
provided that $M_{\ell}$ has an edge not appearing in previously selected copies of $K_r$.

    \item[$(2)$]  For each $i \in [p+1, r]$, pick $k_i - k_i'$ vertex-disjoint copies of $K_r$ with index vector $\mathbf{1}_r - \mathbf{u}_1 + \mathbf{u}_i$, where $k_i'$ is the total number of edges of $M_i$ used in step~$(1)$.
\end{itemize}

{Indeed, in step~$(1)$, it necessarily holds that $$\sum_{i\in [2,p]}(n-|A_i|)\leq \sum_{i\in [p]}(n-|A_i|)= \sum_{i\in [p+1,r]}(|A_i|-n)=\sum_{i\in [p+1,r]}k_i.$$
Meanwhile, the degree condition in~\ref{A3} allows a greedy vertex-selection of each $K_r$ starting with an edge (in both steps). The resulting $K_r$-tiling from above is denoted as $\mathcal{K}$. Note that  every edge in $M_i$ lies in exactly one  clique of $\mathcal{K}$, and such a clique  contains exactly  one vertex from each $A_j$ with $j\in [r]\setminus \{i\}$ and $|A_j|\geq n$.}  
We now show that the resulting partition $\mathcal{P}'=\{A_1', \ldots, A_r'\}$ is balanced, where $A_i':=A_i\setminus V(\mathcal{K}).$
\begin{claim}\label{result-balance}
    For each $\ell\in [r]$, we have $|A_{\ell}'|=a_1 + \sum_{i\in [2,p]} k_i$.
\end{claim}
\begin{proof}
Notice that $\sum_{i\in [r]}k_i=0$ and $\sum_{i\in [p+1,r]}k_i'=\sum_{i\in [2,p]}(n-|A_i|)$. Hence 
\begin{itemize}
    \item $|A_1'|= a_1-\sum_{i=2}^p(n-|A_i|)=a_1+\sum_{i=2}^pk_i$;
    \item for each $\ell\in [2,p]$, 
        \begin{align*}
        |A_{\ell}'|=&a_1+a_\ell-\sum_{i\in [2,p]\setminus\{\ell\}}(n-|A_i|)-\sum_{i\in [p+1,r]}(k_{i}-k_i')\\
        =&n-\sum_{i\in [p+1,r]}k_{i}=a_1-k_1+\sum_{i\in [p]}k_{i}=a_1+\sum_{i\in [2,p]}k_i;
        \end{align*}
        
    \item for each $\ell\in [p+1,r]$,                  \begin{align*}
        |A_{\ell}'|=&a_1+a_\ell-\sum_{i\in [2,p]}(n-|A_i|)-k_{\ell}'-\sum_{i\in [p+1,r]}(k_{i}-k_i')-(k_{\ell}-k_{\ell}')\\
        =&|A_{\ell}|-\sum_{i\in [p+1,r]}k_{i}-k_{\ell}=n-\sum_{i\in [p+1,r]}k_{i}=a_1+\sum_{i\in [2,p]}k_i,
        \end{align*}
\end{itemize}
as desired. 
\end{proof}

By~\ref{A1}, for each $i\in [r]$ one has 
$$|V(\mathcal{K})\cap A_i|\leq 2\sum_{i\in [2,p]}(n-|A_i|)+2\sum_{i\in [p+1,r]}(k_i-k_i')\leq 4\sum_{i\in [p+1,r]}k_i\leq 4r\alpha n.$$ 
Hence $|A_i'|\geq |A_i|-4r\alpha n$. Together with~\ref{A3} and the choice  $\alpha\ll\beta\ll\frac{1}{r}$, for any distinct $i,j \in [r]$ and any $v \in A_i'$, we have
\[
d(v, A_i') \ge |A_i'|-{2\beta}n\ge \Big(1-\frac{1}{r}\Big)|A_i'|+1.
\]
Applying Theorem~\ref{lem:HS thm in multipartite graph} to $G - V(\mathcal{K})$ yields a $K_r$-factor {such that each copy of $K_r$ contains exactly one vertex from each $A_i$ with $i\in [r]$}. This together with $\mathcal{K}$ gives a desired  $K_r$-factor of $G$. 
\end{proof}
To apply Lemma~\ref{lem:balance} in constructing a \(K_r\)-factor of \(G\), 
we shall construct a graph satisfying conditions~\ref{A1}-\ref{A2}. Let $\mathcal{P}=\{A_1,\ldots,A_s,B\}$ be a good partition of $G$. 
As outlined in Section~2, we proceed as follows. 

\begin{itemize}
    \item For~\ref{A3}, we construct a $K_r$-tiling $\mathcal{K}$ that covers all bad or exceptional vertices of $G$.
    \item For~\ref{A1}, we find a $\{K_{r-s},K_{r-s+1}\}$-factor in $G[B\setminus V(\mathcal{K})]$ and contract each copy to either an edge or a single vertex.
    \item For~\ref{A2}, the matching number is essentially derived from~\ref{B2} and the contraction as above.
\end{itemize}

{For $r-s\geq 3$, Theorem~\ref{thm:Non-extremal case r>3} gives a $K_{r-s}$-factor in $G[B']$, where $B'$ denotes the largest subset of $B\setminus V(\mathcal{K})$ with $(r-s)\mid |B'|$.  
For $r-s=2$, by applying the subsequent  result of~\cite{gan2024} to $G[B']$, we obtain that either $G[B']$ contains a perfect matching, or it is disconnected with  two odd components. 
In the latter case, after some minor adjustments, we can ensure that the remaining graph contains a perfect matching.} 

\begin{lemma}[\cite{gan2024}]\label{thm:r-s=2}
Let $\beta>0$ and $n\in \mathbb{N}$ such that $\frac{1}{n}\ll\beta$. Suppose that $G$ is a graph on $2n$ vertices with $$\de(G)\ge (1-\beta)n.$$ Then at least one of the following holds:
\begin{itemize}
    \item $G$ contains a $2\beta$-independent set of size at least $n$;
    \item {$G$ is disconnected and consists of two odd components;}
    \item $G$ contains a perfect matching.
\end{itemize}
\end{lemma}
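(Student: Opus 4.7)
The plan is to argue by contradiction via Tutte's theorem. Suppose $G$ has no perfect matching; since $|V(G)|=2n$ is even, Tutte's theorem produces a set $S\subseteq V(G)$ with $q:=o(G-S)\geq|S|+2$, where $o(\cdot)$ counts odd components. If $|S|=0$, then $G$ has at least two components, and the degree hypothesis $\delta(G)\geq(1-\beta)n$ forces each component to have size $\geq(1-\beta)n+1$; hence there are exactly two components, both necessarily odd, giving the second outcome of the lemma.

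So assume from now on that $|S|\geq 1$. From $q\leq|V(G)\setminus S|=2n-|S|$ and $q\geq|S|+2$ I immediately get $|S|\leq n-1$. The main technical step is to establish $|S|\geq(1-\beta)n$. Indeed, if $G-S$ contains a singleton component $\{v\}$, then all $\geq(1-\beta)n$ neighbours of $v$ lie in $S$ and the bound is immediate. If not, every component $C$ of $G-S$ contains a vertex with at least $(1-\beta)n-|S|$ neighbours inside $C$, whence $|C|\geq(1-\beta)n-|S|+1$; substituting into $(|S|+2)\bigl((1-\beta)n-|S|+1\bigr)\leq q\bigl((1-\beta)n-|S|+1\bigr)\leq 2n-|S|$ and simplifying yields
\[
|S|\bigl((1-\beta)n-|S|\bigr)\;\leq\;2\beta n-2.
\]
Over $|S|\in[1,(1-\beta)n-1]$ the left-hand side is a downward parabola whose minimum is $(1-\beta)n-1$, attained at both endpoints, and exceeds the right-hand side for $\beta$ small and $n$ large; a contradiction. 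Hence $|S|\geq(1-\beta)n$.

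Next, let $T$ denote the set of singleton components of $G-S$ and put $W:=V(G)\setminus(S\cup T)$. Writing $\ell$ for the number of non-singleton odd components (each of size $\geq 3$, being odd and non-singleton), $3\ell\leq|W|=2n-|S|-|T|$; combined with $|T|+\ell\geq q\geq|S|+2$ this yields $|T|\geq 2|S|+3-n$, whence $|W|\leq 3n-3|S|-3\leq 3\beta n$ using $|S|\geq(1-\beta)n$.

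Finally, I take $A:=V(G)\setminus S$ as the required sparse set. Its size is $2n-|S|\geq n+1$, and since every edge of $G[A]$ lies inside a component of $G-S$, while $T$ is independent and no edge crosses components, $e(G[A])=e(G[W])\leq\binom{|W|}{2}\leq\tfrac{9}{2}\beta^2 n^2\leq 2\beta n^2$ for $\beta$ sufficiently small. Thus $A$ is a $2\beta$-independent set of size $\geq n$, delivering the first outcome. Apart from the quadratic-inequality step, everything reduces to elementary component-counting.
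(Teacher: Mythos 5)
This lemma is quoted in the paper from Gan--Han--Hu~\cite{gan2024}; the paper does not contain its own proof, so there is no internal argument to compare against. Your Tutte-theorem proof is nonetheless worth checking on its own merits, and it is essentially correct, with one small imprecision in the quadratic step. You claim that over $|S|\in[1,(1-\beta)n-1]$ the function $|S|\bigl((1-\beta)n-|S|\bigr)$ has minimum $(1-\beta)n-1$, attained at both endpoints. This is true only when $(1-\beta)n$ is an integer. In general the admissible integer values run up to $\lfloor(1-\beta)n\rfloor$, and at $|S|=\lfloor(1-\beta)n\rfloor$ the product equals $\lfloor(1-\beta)n\rfloor\cdot\{(1-\beta)n\}$, which can be as small as $O(n)\cdot\{(1-\beta)n\}$ and hence may well drop below $2\beta n-2$ when the fractional part $\{(1-\beta)n\}$ is smaller than roughly $2\beta$. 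So the quadratic inequality does not, as written, rule out $|S|=\lfloor(1-\beta)n\rfloor$, and the precise conclusion "$|S|\ge(1-\beta)n$" does not follow.

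Fortunately this does not break the proof, only the constant in the intermediate claim. What the inequality does force, after excluding the small endpoint $|S|=1,2,\dots$ (where $s(m-s)\ge m-1$ is far larger than $2\beta n-2$), is $|S|\ge(1-\beta)n-1$. Substituting this weaker bound into your estimate $|W|\le 3n-3|S|-3$ still gives $|W|\le 3\beta n$ exactly, because the slack of $1$ in $|S|$ is absorbed by the additive $-3$ in the $|W|$ bound. Thus $e(G[A])\le\binom{|W|}{2}\le\frac{9}{2}\beta^2n^2\le 2\beta n^2$ for small $\beta$ still holds, and the argument goes through. Apart from this rounding point and an implicit (and harmless) assumption that $\beta$ is small, the component-counting and the Tutte/Berge parity step are correct, the $|S|=0$ case correctly lands in the "two odd components" outcome, and $A=V(G)\setminus S$ is a valid sparse set of size at least $n+1$. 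This gives a clean self-contained proof of a result the paper only cites.
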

The following lemma allows us to find many vertex-disjoint copies of $K_r$ under a slightly weaker minimum degree condition. 
\begin{lemma}[\cite{Treg2015}]\label{SuperT}
    Let $\frac{1}{n} \ll \eps \ll \alpha \ll \frac{1}{r}$ where $r\in \mathbb{N}$ and $r\ge 2$. Suppose that $G$ is an $n$-vertex graph that contains at most $\eps n^r$ copies of $K_r$ and 
$$\delta(G)\ge \Big(1-\frac{1}{r-1}-\alpha\Big)n.$$
Then $G$ admits a $\sqrt{\alpha}$-independent set of size at least $\frac{n}{r-1}$.
\end{lemma}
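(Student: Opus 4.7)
The plan is to use the scarcity of $K_r$-copies, together with the edge density implied by the minimum degree, to show that $G$ is structurally close to the Tur\'an graph $T_{r-1}(n)$, and then to extract a large sparse part by pigeonhole. Since $\delta(G)\ge (1-\tfrac{1}{r-1}-\alpha)n$ forces $e(G)\ge \tfrac{1}{2}(1-\tfrac{1}{r-1}-\alpha)n^2$, the graph lies essentially at the Tur\'an threshold for $K_r$ up to an $O(\alpha n^2)$ perturbation, which is the regime where the Erd\H{o}s-Simonovits stability theorem applies.

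Concretely, I would first invoke the $K_r$-removal lemma. Choosing auxiliary constants with $\eps\ll \eps_1\ll \eps_2\ll \alpha$, the hypothesis that $G$ contains at most $\eps n^r$ copies of $K_r$ allows us to delete an edge set $E_0$ with $|E_0|\le \eps_1 n^2$ such that $G':=G-E_0$ is $K_r$-free, and still
\[
e(G')\ge \Big(1-\tfrac{1}{r-1}\Big)\binom{n}{2}-\tfrac{1}{2}\eps_2 n^2.
\]
Next I would apply the Erd\H{o}s-Simonovits stability theorem to the $K_r$-free graph $G'$ to obtain a partition $V(G)=V_1\cup \cdots\cup V_{r-1}$ satisfying
\[
\sum_{i\in [r-1]} e_{G'}(V_i)\le \tfrac{1}{2}\eps_2 n^2.
\]
Restoring the at most $\eps_1 n^2$ removed edges yields $\sum_{i\in [r-1]} e_{G}(V_i)\le \eps_2 n^2$.

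Finally, by the pigeonhole principle some part $V_i$ has size $|V_i|\ge \lceil \tfrac{n}{r-1}\rceil\ge \tfrac{n}{r-1}$, and for this $i$ we have
\[
e_G(V_i)\le \sum_{j\in [r-1]} e_G(V_j)\le \eps_2 n^2\le \sqrt{\alpha}\, n^2
\]
since $\eps_2\ll \alpha$. Therefore $V_i$ is a $\sqrt{\alpha}$-independent set of size at least $\tfrac{n}{r-1}$, as required. The argument is essentially a direct application of two classical tools, and the only real point requiring care is the hierarchy of constants: the removal-lemma input $\eps$ must be much smaller than the stability error $\eps_2$ we can afford, which in turn must be much smaller than $\sqrt{\alpha}$; once $\eps\ll \eps_1\ll \eps_2\ll \alpha$ is in place, no further obstacle arises.
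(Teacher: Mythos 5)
The paper does not prove this lemma --- it is cited from Treglown's paper and used as a black box --- so there is no paper-internal proof to compare against. Evaluating your argument on its own merits: the overall strategy ($K_r$-removal lemma to pass to a $K_r$-free spanning subgraph, Erd\H{o}s--Simonovits stability to get an $(r-1)$-partition with sparse classes, pigeonhole to extract a large class) is the standard route and would indeed prove the statement. However, there is a quantitative slip. You introduce constants with $\eps \ll \eps_1 \ll \eps_2 \ll \alpha$ and then assert that after removal
\[
e(G') \ge \Big(1-\tfrac{1}{r-1}\Big)\binom{n}{2} - \tfrac{1}{2}\eps_2 n^2.
\]
This cannot hold: the minimum-degree hypothesis $\delta(G)\ge (1-\tfrac{1}{r-1}-\alpha)n$ only gives $e(G) \ge \tfrac{1}{2}(1-\tfrac{1}{r-1}-\alpha)n^2$, so even before removing a single edge the deficit from the Tur\'an number is already of order $\alpha n^2$, which dominates $\eps_2 n^2$ precisely because you chose $\eps_2 \ll \alpha$. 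Stability therefore yields a partition with $\sum_i e_{G'}(V_i)$ of order $\alpha n^2$, not $\eps_2 n^2$.

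The argument is rescued by the generosity of the target: the lemma only requires a $\sqrt{\alpha}$-independent set, and $\alpha \le \sqrt{\alpha}$ for small $\alpha$. The repair is to drop $\eps_2$ entirely, write $e(G')\ge (1-\tfrac{1}{r-1})\binom{n}{2} - \alpha n^2$, invoke a quantitative stability theorem (e.g.\ F\"uredi's version, which bounds the number of misplaced edges linearly in the edge deficit) to obtain $\sum_i e_{G'}(V_i) \le C\alpha n^2$ for an absolute constant $C$, and conclude $\sum_i e_G(V_i) \le (C\alpha + \eps_1)n^2 \le \sqrt{\alpha}\, n^2$. The chain of constants should accordingly read $\eps \ll \eps_1 \ll \alpha \ll \tfrac{1}{r}$, with $\alpha$ small enough that $C\alpha + \eps_1 \le \sqrt{\alpha}$; the claim in your final sentence that the hierarchy $\eps \ll \eps_1 \ll \eps_2 \ll \alpha$ removes all obstacles is exactly where the gap sits. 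With this correction the proof is sound.
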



In the following, we will use Lemma~\ref{lem:balance} to prove Lemma~\ref{lem:covering bad vertices for balance}. 

\begin{proof}[{\bf Proof of Lemma~\ref{lem:covering bad vertices for balance}}]
Choose $r\geq 3,n\in \mathbb{N}$ and constants satisfying 
$$\frac{1}{n}\ll \alpha\ll\eps\ll\beta',  \beta\ll\gamma\ll \frac{1}{r}.$$ 
Assume that $G$ is an $rn$-vertex graph with $\de(G)\geq (r-1)n-\alpha n$ and $G$ admits an $(\alpha,\beta,\beta',\gamma)$-good partition $\mathcal{P}=\{A_1,\ldots,A_s,B\}$ {for some  $s\in [r]$}. Without loss of generality, assume that $|A_1|\leq |A_2|\leq \cdots\leq |A_s|$  and assume $A_1,\ldots,A_p$ are all the parts of size at most $n$ for some $p\in[s]$. 
For each $i \in [s]$, if $|A_i| > n$, then let $M_i$ be a matching of size $|A_i| - n + r$ in $G[A_i]$ (its existence follows from~\ref{B2}); {if $|A_s| = n$, then let $M_s$ be an edge in $G[A_s]$ containing at most one vertex that is not $(\alpha^{1/5},A_s)$-good (this also follows from~\ref{B2} together with the definition of $(\alpha^{1/5},A_s)$-good vertex and~\ref{B03});} 
otherwise we let $M_i = \emptyset$. Define $U = \bigcup_{i \in [s]} V(M_i)$ as the union of the vertex sets of these matchings. By~\ref{B1}, one has $|U| \leq 3r\alpha n$. 
Our goal is to construct an auxiliary graph and a corresponding partition as required in Lemma~\ref{lem:balance}. 

\medskip
{\bf Step 1.~Greedily cover all bad or exceptional vertices using vertex-disjoint copies of $K_r$ whilst avoiding any vertex in $U$.}
\medskip%

Notice that $\Delta(G[A_i])\leq \beta' n$ if $|A_i|>n$. This together with the minimum degree condition gives that $d(v,D)\geq |D|-2{\beta'} n$ for any $v\in A_i$ with $|A_i|> n$ and any $D\in \mathcal{P}\setminus \{A_i\}$, {which satisfies~\ref{A3}}. It remains to consider the following two types of vertices:  
\begin{enumerate}[label =\rm  {\bf Type \arabic{enumi}.}, ref = \rm {\bf Type \arabic{enumi}},  leftmargin=6em]
    \item\label{v1} $v\in A_i\setminus U$ and $v$ is not $(\alpha^{1/5},A_i)$-good  for some $i\in [s]$ with $|A_i|\leq n$;
    \item\label{v2} 
    $v\in B$ and $v$ is $(\alpha^{1/5},B)$-exceptional. 
\end{enumerate}

The following   claim guarantees the existence of a copy of $K_r$ that covers any fixed vertex while avoiding a prescribed vertex set.   
\begin{claim}\label{covering-v}
   Let $W\subseteq V(G)$ be a set of size at most $\frac{\beta}{3}n$ and $v_i$ be a vertex in $A_i\setminus W$ for some $i\in [s]$. Then there is a copy of $K_r$ in $G-W$ with index vector $(1,\ldots,1,r-s)$ containing $v_i$.  
\end{claim}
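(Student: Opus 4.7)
The plan is to build the claimed $K_r$ in two stages. First I pick one vertex $v_j\in A_j$ for each $j\in[s]\setminus\{i\}$, all adjacent to $v_i$ and to one another; then I locate $r-s$ pairwise adjacent vertices in $B$ that are common neighbours of $\{v_i\}\cup\{v_j:j\ne i\}$.

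\emph{Stage 1.} For each $j\in[s]\setminus\{i\}$ in turn, I select $v_j$ from $A_j\cap N(v_i)\cap\bigcap_{k<j,\,k\ne i}N(v_k)\setminus W$ so that $v_j$ is additionally $(\alpha^{1/5},A_j)$-good. By~\ref{B04}, $|A_j\cap N(v_i)|\ge\beta n$; by~\ref{B03}, all but at most $2\alpha n$ vertices of $A_j$ are good; and the minimum-degree hypothesis $\delta(G)\ge(r-1)n-\alpha n$ combined with goodness of each previously chosen $v_k\in A_k$ forces $|A_j\setminus N(v_k)|\le 3\alpha^{1/5}n$ (using the size bounds on $A_k$ provided by~\ref{B1}). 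Hence the candidate set at each step has size at least $\beta n-(s-1)\cdot 3\alpha^{1/5}n-2\alpha n-|W|\ge\beta n/2$, and the greedy selection succeeds.

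\emph{Stage 2.} Set $B^\ast:=(B\cap N(v_i)\cap\bigcap_{j\ne i}N(v_j))\setminus W$. If $r-s=0$ we are done; if $r-s=1$, any vertex of $B^\ast$ works, since $|B^\ast|\ge d(v_i,B)-(s-1)\cdot 3\alpha^{1/5}n-|W|\ge\beta n/3$ by~\ref{B05}. For the main case $r-s\ge 2$, from $d(v_i)\ge(r-1)n-\alpha n$ and $|V(G)\setminus(B\cup\{v_i\})|\le sn+r\alpha n$ (the latter from~\ref{B1}) I deduce $d(v_i,B)\ge(r-s-1)n-O(\alpha n)$, so $|B\setminus N(v_i)|\le n+O(\alpha n)$; combined with $|B\setminus N(v_j)|\le 3\alpha^{1/5}n$ for good $v_j$ and $|W\cap B|\le\beta n/3$, the set $Y:=B\setminus B^\ast$ has size at most $n+O(\beta n)$, whence $|B^\ast|\ge(r-s-1)n-O(\beta n)$ and $\delta(G[B^\ast])\ge(r-s-2)n-O(\beta n)$ by~\ref{B05}. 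I now apply Lemma~\ref{SuperT} to $G[\hat B]$, where $\hat B$ is $B^\ast$ itself when $|B^\ast|\le(r-s-1)n$, and otherwise any $\lceil(r-s-1)n\rceil$-vertex subset of $B^\ast$ whose min-degree inherited from $G[B]$ remains $(r-s-2)n-O(\alpha n)$ by~\ref{B05}. In either case the hypothesis $\delta(G[\hat B])\ge(1-\tfrac{1}{r-s-1}-\alpha'')|\hat B|$ holds with $\sqrt{\alpha''}\ll\gamma$, so Lemma~\ref{SuperT} yields either (i) at least $\varepsilon|\hat B|^{r-s}$ copies of $K_{r-s}$ in $\hat B\subseteq B^\ast$, proving the claim; or (ii) a $\sqrt{\alpha''}$-independent set $S\subseteq\hat B$ of size at least $|\hat B|/(r-s-1)\ge n-O(\beta n)$. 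In case~(ii), I extend $S$ to $\tilde S\subseteq B$ of size $\lceil|B|/(r-s)\rceil$ by adjoining at most $O(\beta n)$ further vertices of $B\setminus S$; the extra edges total at most $O(\beta n^2)$, so $e(G[\tilde S])<\gamma n^2$, exhibiting a $\gamma$-independent set of size $|B|/(r-s)$ in $B$ and contradicting~\ref{B07}.

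The main obstacle is the borderline density of $G[B^\ast]$: restricting to $N(v_i)\cap B$ drops the relative density to exactly the Tur\'an threshold $(r-s-2)/(r-s-1)$ for $K_{r-s}$-containment, so Tur\'an's theorem alone is insufficient. The combination of Lemma~\ref{SuperT}, which turns the density bound into either supersaturation or a sparse witness, with the non-sparseness condition~\ref{B07}, which rules out the sparse witness via the extension trick described above, is precisely what carries the argument across this threshold.
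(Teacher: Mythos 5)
Your proof is correct and follows essentially the same two-stage strategy as the paper's: a greedy selection of $(\alpha^{1/5},A_j)$-good vertices $v_j$ in the $A_j$'s, followed by an application of Lemma~\ref{SuperT} in contrapositive form to the common neighbourhood inside $B$, with~\ref{B07} ruling out the sparse alternative. The only substantive variation is in how the density hypothesis of Lemma~\ref{SuperT} is met: you pass to a fixed-size subset $\hat B$ of $B^\ast$ with $|\hat B|=(r-s-1)n$ when $B^\ast$ is too large, which makes the density check immediate, whereas the paper works with $\tilde B = B^\ast$ directly and verifies $\delta(G[\tilde B]) \ge \bigl(1-\tfrac{1}{r-s-1}-2\beta\bigr)|\tilde B|$ via an algebraic bound using the inequality $\frac{|B|}{r-s}\le\frac{(1+\beta)(|B|-x)}{r-s-1}$, which exploits the upper bound $x\le n+O(\beta n)$. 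Both resolutions of the "$|B^\ast|$ may exceed $(r-s-1)n$" issue are valid. You also spell out explicitly the extension argument deducing a contradiction with~\ref{B07} from a $\sqrt{\alpha''}$-independent set, which the paper leaves terse ("Based on~\ref{B07} and $\beta\ll\gamma$\dots"); this is a helpful clarification rather than a different route.
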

\begin{proof}
Let $v_i\in A_i\setminus W$ for some $i\in [s]$ {and choose \( j \in [s] \setminus \{i\} \). It follows from~\ref{B04} that} \( d(v_i, A_j) \geq \beta n \), and from \ref{B03} that at most \( 2\alpha n \) vertices in \( A_j \) are not \( (\alpha^{1/5}, A_j) \)-good. As $\alpha\ll\beta$, there are at least \( (\beta - 2\alpha -\frac{\beta}{3})n\ge \frac{\beta}{2}n \) vertices in \( N(v_i, A_j)\setminus W \) that are \( (\alpha^{1/5}, A_j) \)-good; we  fix one such vertex as \( v_j \). {Given} $\ell\in [s] \setminus \{i, j\}$, the minimum degree condition \( \delta(G) \geq (r-1)n - \alpha n \) implies \( d(v_j, A_\ell) \geq |A_\ell| - 2\alpha^{1/5} n \).
{Thus} at least \( (\beta - 2\alpha^{1/5} - 2\alpha-\frac{\beta}{3})n\ge \frac{\beta}{2}n \) vertices in \( N(\{v_i, v_j\}, A_\ell\setminus W) \) are \( (\alpha^{1/5}, A_\ell) \)-good; we fix one such vertex as \( v_{\ell} \). 
Iterating this process for all \( \ell \in [s] \setminus \{i,j\} \), one can take an \((\alpha^{1/5}, A_\ell) \)-good vertex \( v_{\ell}\in A_\ell\setminus W \), which is adjacent to all previously fixed vertices. This follows {from the fact that \( \alpha \ll \beta \), which guarantees at least \( \frac{\beta}{2}n \) available choices  at each step.}
Hence we obtain a clique on vertices, say $\{v_1,v_2,\ldots,v_s\}$.

Next, we are to find a copy of $K_{r-s}$ in $N(\{v_1,\ldots,v_s\},B) \setminus W$, and thus form a copy of $K_r$ covering the vertex $v_i$. Let $\Tilde{B}=N(\{v_1,\cdots,v_s\},B)\setminus W$. Similarly, 
$
|\Tilde{B}|\geq \frac{\beta}{2}n
.$ 
If $r-s\le 1$, then we are done. In what follows, we assume $r-s\ge 2$. Recall that $(r-s-r\alpha)n\leq |B|\leq (r-s+r\alpha)n$. 
Then~\ref{B05} implies that 
\begin{align*}
\de(G[B])&\ge (r-s-1)n-r\alpha n=\Big(1-\frac{1}{r-s}\Big)\cdot\big((r-s)n+r\alpha n\big)-\Big(2-\frac{1}{r-s}\Big)\cdot r\alpha n\\
&\ge \Big(1-\frac{1}{r-s}-\alpha^{1/5}\Big)\cdot|B|.
\end{align*}
Let $x:=|B\setminus \Tilde{B}|$. {Since $v_j$ is $(\alpha^{1/5},A_j)$-good for each $j\in [s]\setminus \{i\}$, one has  
\begin{align}\label{eq:x}
    x\leq \Big|\bigcup_{i\in [s]}(B\setminus N(v_i))\Big|+|W|\leq  n+2(s-1)\alpha^{1/5} n+\frac{\beta}{3} n.
\end{align}
It follows that 
$|\Tilde{B}|\geq (r-s-1)n-\frac{\beta}{2}n$ as $\alpha\ll\beta$.} 
Thus, \allowdisplaybreaks
\begin{align*}
\de(G[\Tilde{B}])&\ge \de(G[B])-x\ge \Big(1-\frac{1}{r-s}-\alpha^{1/5}\Big)\cdot|B|-x\\
&=|B|-x-\frac{|B|}{r-s}-\alpha^{1/5}|B|\\
&\ge|B|-x-\frac{(1+\beta)(|B|-x)}{r-s-1}-(|B|-x)\beta \\
&\geq\Big(1-\frac{1}{r-s-1}-2\beta\Big)\cdot(|B|-x)\\
&=\Big(1-\frac{1}{r-s-1}-2\beta\Big)\cdot|\Tilde{B}|,
\end{align*}
where the last inequality follows as 
$\frac{|B|}{r-s}\le\frac{(1+{\beta})(|B|-x)}{r-s-1}$ by~\eqref{eq:x}.
Based on~\ref{B07} and $\beta\ll \gamma$, we conclude that $G[\Tilde{B}]$ has no $\sqrt{\beta}$-independent set of size at least $\frac{|\Tilde{B}|}{r-s-1}$.  Applying Theorem~\ref{SuperT} with $(G[\Tilde{B}],\beta)$ in place of $(G,\alpha)$ yields a copy of $K_{r-s}$ in $G[\Tilde{B}]$. {This yields a copy of \(K_r\) with index vector $(1,\ldots,1,r-s)$ covering \(v_i\) in \(G-W\), as desired.} 
\end{proof}

By~\ref{B03}, there are at most $2r\alpha n$ vertices of~\ref{v1}. 
Combining Claim~\ref{covering-v} with the fact that $|U|\leq 3r\alpha n$, {we conclude that} there exists a \(K_r\)-tiling \(\mathcal{Q}\) in \(G-U\) covering all vertices of~\ref{v1}, such that each copy of \(K_r\) has index vector \((1,\dots,1,r-s)\) and contains exactly one vertex of~\ref{v1}.  
Then $|V(\mathcal{Q})|\le2r^2\alpha n$.



Now, we consider vertices of~\ref{v2}. 

\begin{claim}\label{claim:good Kr-s in B}
   Let $W\subseteq V(G)$ be a set of size at most $\frac{\beta}{3}n$. For any $w\in B\setminus W$, there is a copy of $K_r$ in $G-W$ with index vector $(1,\ldots,1,r-s)$ containing $w$.  
\end{claim}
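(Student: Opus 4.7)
The plan is to adapt the argument of Claim~\ref{covering-v}, using $w\in B$ as the seed vertex and reducing the target clique inside $B$ from $K_{r-s}$ to $K_{r-s-1}$; this tacitly assumes $r-s\ge 1$, which is forced by $w\in B$ having to appear in the desired $K_r$.

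First, I build an $s$-clique $\{v_1,\ldots,v_s\}$ with $v_j\in A_j\setminus W$, each $(\alpha^{1/5},A_j)$-good and adjacent to $w$. By~\ref{B04}, $d(w,A_1)\ge \beta n$; combined with the bound of $2\alpha n$ on non-good vertices from~\ref{B03}, with $|W|\le \frac{\beta}{3}n$ and $\alpha\ll\beta$, this leaves at least $\frac{\beta}{2}n$ admissible choices for $v_1$. For $j\in[2,s]$, each previously chosen good $v_k$ satisfies $d(v_k,A_j)\ge |A_j|-2\alpha^{1/5}n$ by combining $\delta(G)\ge (r-1)n-\alpha n$ with $d(v_k,A_k)\le \alpha^{1/5}n$, so at least $\frac{\beta}{3}n$ good choices remain in $N(\{w,v_1,\ldots,v_{j-1}\},A_j)\setminus W$. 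The resulting set $\{w,v_1,\ldots,v_s\}$ spans a clique.

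Next, put $\widetilde B:=N(\{w,v_1,\ldots,v_s\},B)\setminus (W\cup\{w\})$ and seek a $K_{r-s-1}$ in $G[\widetilde B]$. By~\ref{B05}, $|B|-d(w,B)\le n+2r\alpha n$; each good $v_j$ excludes at most $2\alpha^{1/5}n$ vertices from $B$; together with $|W|\le \frac{\beta}{3}n$ and~\ref{B1}, this gives $|\widetilde B|\ge (r-s-1)n-2\beta n$. If $r-s=1$, the clique $\{w,v_1,\ldots,v_s\}$ is already the required $K_r$. If $r-s=2$, any vertex of the nonempty set $\widetilde B$ provides the missing $K_1$. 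For $r-s\ge 3$, a calculation mirroring the one in Claim~\ref{covering-v} (starting from $\delta(G[B])\ge (r-s-1)n-r\alpha n$ and subtracting the at-most $n+O(\alpha n)$ additional losses) yields
\[
\delta(G[\widetilde B])\ge \Bigl(1-\tfrac{1}{r-s-2}-2\beta\Bigr)|\widetilde B|.
\]
Moreover,~\ref{B07} forbids any $\sqrt{\beta}$-independent subset of $\widetilde B$ of size $\ge |\widetilde B|/(r-s-2)$, since the identity $\tfrac{r-s-1}{r-s-2}=1+\tfrac{1}{r-s-2}$ ensures that such a set would be a $\gamma$-independent subset of $B$ of size $\ge |B|/(r-s)$ (with $\alpha\ll\beta\ll\gamma\ll 1/r$ absorbing the additive errors). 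Theorem~\ref{SuperT}, applied with $r$ replaced by $r-s-1$ to $G[\widetilde B]$, then produces the required $K_{r-s-1}$; adjoining it to $\{w,v_1,\ldots,v_s\}$ yields the desired $K_r$ with index vector $(1,\ldots,1,r-s)$.

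The main delicate point is checking that the minimum-degree bound for $G[\widetilde B]$ still fits the threshold demanded by Theorem~\ref{SuperT} after $\widetilde B$ shrinks by roughly $n$ extra vertices compared with its counterpart in Claim~\ref{covering-v}; this reduces to the arithmetic identity above, which provides exactly the slack needed to match $|B|/(r-s)$ against $|\widetilde B|/(r-s-2)$ and to transfer both the minimum-degree and the no-large-independent-set hypotheses from $B$ to $\widetilde B$.
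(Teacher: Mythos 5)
Your proof is correct, but the construction order is genuinely different from the paper's. The paper first works entirely inside $B$: it deletes the (at most $r\alpha n$) $(\alpha^{1/5},B)$-exceptional vertices to form $G_1$, finds a $K_{r-s-1}$ in $G_1[N_{G_1}(w)]$ via Theorem~\ref{SuperT}, and only then greedily extends the resulting $K_{r-s}=\{w,u_1,\dots,u_{r-s-1}\}$ to a transversal $K_r$ --- crucially exploiting that each $u_i$, being non-exceptional, has $d(u_i,A_j)\ge|A_j|-\alpha^{1/5}n$ in every $A_j$. You instead build the $s$-clique in $A_1,\dots,A_s$ \emph{first}, seeding from $w$ via~\ref{B04} and using $(\alpha^{1/5},A_j)$-good vertices, and only then look for a $K_{r-s-1}$ in the common neighbourhood $\widetilde B\subseteq B$. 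This mirrors the structure of Claim~\ref{covering-v} (with the extra constraint of adjacency to $w$ shrinking $\widetilde B$ by roughly one further copy of $n$, hence the shift from $\tfrac{1}{r-s-1}$ to $\tfrac{1}{r-s-2}$ in the degree threshold and in the invocation of~\ref{B07}). Your approach buys uniformity with Claim~\ref{covering-v} and dispenses with~\ref{B06} and the exceptional-vertex filtering altogether, since once the $A_j$-vertices are fixed, nothing about the degrees of the $B$-vertices into the $A_j$'s is needed. The paper's route buys the opposite economy: it builds the $B$-part first, which it can then extend mechanically because it pre-filtered to non-exceptional vertices. The arithmetic you flag --- that $|\widetilde B|\approx(r-s-1)n$ still exceeds $(r-s-2)\cdot\tfrac{|B|}{r-s}$ by the needed $\Theta(n)$ slack, so~\ref{B07} transfers and the min-degree hypothesis of Theorem~\ref{SuperT} is met --- is exactly the delicate point, and your verification of it is sound (with $\beta\le\gamma^2$ from $\beta\ll\gamma$ to turn $\sqrt\beta$-sparse into $\gamma$-sparse); the special cases $r-s\in\{1,2\}$ are handled correctly by hand.
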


\begin{proof}
Let $w\in B\setminus W$. We first show that the vertex $w$ lies in a copy of $K_{r-s}$ in $G[B\setminus W]$. 
The case for $r-s\le 1$ is trivial, so we only consider $r-s\geq 2$. Let $G_1$ be a graph obtained from $G[B\setminus W]$ by deleting all $(\alpha^{1/5},B)$-exceptional vertices in $G$. 
It suffices to find a copy of $K_{r-s-1}$ in $G_1[N_{G_1}(w)]$. Recall that in~\ref{B06} there are at most $r\alpha n$ $(\alpha^{1/5},B)$-exceptional vertices in $B$. Hence  
$$|N_{G_1}(w)|\ge \de(G[B])-\frac{\beta}{3} n-r\alpha n\ge \Big(1-\frac{1}{r-s}-2\beta\Big)\cdot{|B\setminus W|}.$$
For any vertex $u\in N_{G_1}(w)$, as $\beta\ll\frac{1}{r}$, we have 
\begin{align*}
|N_{G_1}(w)\cap N_{G_1}(u)|
&\ge |N_{G_1}(w)|+|N_{G_1}(u)|-|G_1|\\
&\ge |N_{G_1}(w)|+\Big(1-\frac{1}{r-s}-2\beta\Big)\cdot{|B\setminus W|}-|B\setminus W|\\
&= |N_{G_1}(w)|-\Big(\frac{1}{r-s}+2\beta\Big)\cdot{|B\setminus W|}\\
&\geq \Big(1-\frac{1}{r-s-2}-\beta\Big)\cdot|N_{G_1}(w)|,
\end{align*}
which implies $\de(G_1[N_{G_1}(w)])\ge \big(1-\frac{1}{r-s-2}-\beta\big)|N_{G_1}(w)|$. {Based on~\ref{B07} and $\beta\ll \gamma$, we obtain that $G_1[N_{G_1}(w)]$ contains no $\sqrt{\beta}$-independent set of size $\frac{|N_{G_1}(w)|}{r-s-2}$.  
Therefore, by applying Theorem~\ref{SuperT}  with $( G_1[N_{G_1}(w)], \beta)$ in place of $(G, \alpha)$,} there is a copy of $K_{r-s-1}$ in $G_1[N_{G_1}(w)]$, which together with $w$ forms a {copy of} $K_{r-s}$ in $G_1$. Let $T$ be {one such} $K_{r-s}$ with vertex set $\{w,u_1,\ldots,u_{r-s-1}\}$.

Observe that $d(u_i,A_j)\geq |A_j|-\alpha^{1/5}n$ for each $i\in [r-s-1]$ and each $j\in [s]$, and $d(w,A_j)\geq \beta n$ for each $j\in [s]$ by~\ref{B04}. By a similar construction with $\mathcal{Q}$ and the fact that $\alpha\ll \beta$, we can extend \( T\) into a copy of \( K_r \) with index vector $(1,\ldots,1,r-s)$ while avoiding $W$. 
\end{proof}

{Recall that $|V(\mathcal{Q})|+|U|\leq 2r^2\alpha n+3r\alpha n$ and there are at most $r\alpha n$ $(\alpha^{1/5},B)$-exceptional vertices in $B$.  Hence, Claim~\ref{claim:good Kr-s in B} yields a $K_r$-tiling $\mathcal{T}$ in $G$ that avoids $V(\mathcal{Q})\cup U$ and covers all vertices of~\ref{v2}, where each copy of $K_r$ has index vector $(1,\ldots,1,r-s)$.} 
Let $\mathcal{K}=\mathcal{T}\cup\mathcal{Q}$. Then $\mathcal{K}$ has index vector $|\mathcal{K}|(1,1,\ldots, r-s)$ and  $|V(\mathcal{K})\cup U|\leq 4r^2\alpha n$. 

\medskip
{\bf Step 2. Contracting disjoint copies of $K_{r-s}$ and $K_{r-s+1}$ in $G[B\setminus V(\mathcal{K})]$.}
\medskip

\noindent \textbf {Step 2.1. Fixing divisibility and balancing.
}

Note that it may happen that $(r-s) \nmid |B \setminus V(\mathcal{K})|$. Assume that $r-s\ge2$\footnote{The case $r-s=1$ is much simpler.} and $|B\setminus V(\mathcal{K})|\equiv q \pmod{(r-s)}$. 
By~\ref{B05} and $\alpha\ll\beta$, we have
\begin{align}\label{mini-degree-K}
\de(G[B\setminus V(\mathcal{K})])\ge (r-s-1)n-r\alpha n-4r^2\alpha n\ge \Big(1-\frac{1}{r-s}-\beta\Big)\cdot|B\setminus V(\mathcal{K})|.
\end{align}
As $\eps\ll\beta$, applying Theorem~\ref{SuperT} with $(G[B \setminus V(\mathcal{K})],\beta)$ in place of $(G,\alpha)$ yields $\eps n$ vertex-disjoint copies of $K_{r-s+1}$, denoted as $\mathcal{H}_0$, in $G[B \setminus V(\mathcal{K})]$.  Let $H$ be a copy of $K_{r-s+1}$ in $\mathcal{H}_0$. 
Note that each vertex $v$ in $B\setminus V(\mathcal{K})$ satisfies $d(v,A_i)\ge |A_i|-\alpha^{1/5} n$ for all $i\in [s]$. 
{Thus, for any $j\in [s]$, we can extend $H$ into $K_r$ by iteratively select an $(\alpha^{1/5}, A_{i})$-good vertex $v_i\in A_i$ for all $i\in [s]\setminus\{j\}$, whilst avoiding the vertices in $V(\mathcal{K})\cup U$.}
{This is possible because each chosen vertex \(v_i\) is \((\alpha^{1/5},A_i)\)-good, and the number of candidates for \(v_i\) is at least
\[
|A_i| - (r-s+1)\alpha^{1/5}n - s\alpha^{1/5}n-|V(\mathcal{K})\cup U| \ge \frac{n}{2}
.\]
}
As $q < r-s$, we extend $q$ copies of $K_{r-s+1}$ from $\mathcal{H}_0$ as above into a $K_r$-tiling $\mathcal{H}$ and moreover
\begin{enumerate}
[label =\rm  (C\arabic{enumi})]
    \item\label{E1} if $|A_s| > n$, then $V(\mathcal{H}) \cap A_s = \emptyset$;
    \item\label{E2} if $|A_s|\leq n$, 
   then $|V(\mathcal{H}) \cap A_i| = q-q_i$ for $i\in [s]$ and some nonnegative integer $q_i$ such that $q_i\leq n-|A_i|$ and $\sum_{i\in [p]}q_i=q$.
    \end{enumerate}
Indeed, if $|A_s|\leq n$, then $p=s$ and  $\sum_{i\in [s]}(n-|A_i|)=|B|-(r-s)n\geq q$. Hence, for each $i\in [s]$, there exists $q_i\in \mathbb{N}$ such that $q_i\leq n-|A_i|$ and $\sum_{i\in [p]}q_i=q$. Thus, it suffices to choose for each $i\in [s]$,  $q_i$ copies of $K_r$ such that each of them contains no vertices from $A_i$.

Let $\hat{\mathcal{P}}:=\{\hat{A_1},\ldots,\hat{A_s},\hat{B}\}$ be the resulting partition of $\hat{G}:=G-V(\mathcal{K}\cup \mathcal{H})$, each part being obtained by removing all vertices in $V(\mathcal{K}\cup \mathcal{H})$. Then, $(r-s)\mid |\hat{B}|$. {Moreover,
\begin{itemize}
    \item if $|A_s|>n$, then $|\hat{A_i}|-\frac{|\hat{G}|}{r}=|A_i|-n$ for each $i\in [s-1]$ and $|\hat{A_s}|-\frac{|\hat{G}|}{r}=|A_s|-n+q$;
    \item if $|A_s|\leq n$, then $|\hat{A_i}|-\frac{|\hat{G}|}{r}=|A_i|-n+q_i\leq 0$ for each $i\in [s]$. 
\end{itemize} 
It follows that $|\hat{A_i}|>\frac{|\hat{G}|}{r}$ {if and only if} $|A_i|>n$ for each $i\in [s]$.} 

{Let $a:=\frac{|\hat{G}|}{r}-\frac{|\hat{B}|}{r-s}$. Suppose that  $a>0$. Note that $a\leq 2r\alpha n$ and  
$$
\sum_{i\in [p+1,s]}\Big(|\hat{A_i}|-\frac{|\hat{G}|}{r}\Big)\geq\sum_{i\in [s]}\Big(|\hat{A_i}|-\frac{|\hat{G}|}{r}\Big)=|\hat{G}|-|\hat{B}|-s\frac{|\hat{G}|}{r}=(r-s)\Big(\frac{|\hat{G}|}{r}-\frac{|\hat{B}|}{r-s}\Big)=(r-s)a.
$$
Recall that for each $i\in [p+1,s]$,  $G[\hat{A_i}\cap U]$ contains a matching of size $|A_i|-n+r\geq |\hat{A_i}|-\frac{|\hat{G}|}{r}+r-q$. For  each such $i$, choose $p_i\in \mathbb{N}$ such that $p_i\leq |\hat{A_i}|-\frac{|\hat{G}|}{r}$ and  $\sum_{i\in [p+1,s]}p_i=(r-s)a$. Let $U'\subseteq U$ be a subset for which each induced subgraph $G[\hat{A_i}\cap U']$ contains $p_i$ edges. 
Hence, by the same argument as in Claim \ref{covering-v}, one may find a $K_r$-tiling $\mathcal{R}$ consisting of exactly $(r-s)a$ copies of $K_r$ each containing one edge in $G[U']$, $r-s-1$ vertices from $B$ and at least one vertex from each $A_i$ with $i\in[s]$, whilst avoiding $V(\mathcal{K}\cup \mathcal{H})\cup (U\setminus U')$.  If $a\leq 0$, then take $\mathcal{R}=\emptyset$ and $p_i=0$ for each $i\in [p+1,s]$. Thus, $|V(\mathcal{K}\cup \mathcal{H}\cup \mathcal{R})|\leq 3r^3\alpha n$.}

{Let $\mathcal{P}':=\{A_1',\ldots,A_s',B'\}$ be the resulting partition of $G':=G-V(\mathcal{K}\cup \mathcal{H}\cup \mathcal{R})$, each part being obtained by removing all vertices in $V(\mathcal{K}\cup \mathcal{H}\cup \mathcal{R})$. Then, $(r-s)\mid |B'|$. 
Furthermore, if $a>0$, we have 
\begin{align}\notag
&\frac{|B'|}{r-s}=\frac{|\hat{B}|-(r-s-1)(r-s)a}{r-s}=\frac{|\hat{G}|}{r}-(r-s)a=\frac{|G'|}{r},
\\\notag
&|A_i'|=|\hat{A_i}|-(r-s)a\leq \frac{|\hat{G}|}{r}-(r-s)a=\frac{|G'|}{r}\ \text{for}\ i\in [p],\\\notag
&|A_i'|=|\hat{A_i}|-p_i-(r-s)a\ge \frac{|\hat{G}|}{r}-(r-s)a=\frac{|G'|}{r}\ \text{for}\ i\in [p+1,s].
\end{align}
Thus, in a summary, we have
\begin{align}\label{G'}
\frac{|B'|}{r-s}\ge \frac{|G'|}{r},\ |A_i'|\leq \frac{|G'|}{r}\ \text{for}\ i\in [p],\ \text{and}\ 
\frac{|G'|}{r}\leq |A_i'|\leq |\hat{A_i}|-\frac{|V(\mathcal{R})|}{r}\ \text{for}\ i\in [p+1,s]. 
\end{align}
Moreover, for each $i\in [p+1,s]$, $G'[A_i'\cap U]$ contains a matching of size} \begin{align}\label{matching'}
    |A_i|-n+r-p_i\geq |\hat{A_i}|-\frac{|\hat{G}|}{r}-p_i+r-q\geq  |A_i'|-\frac{|G'|}{r}+1.
\end{align}

Let $b:=\frac{|B'|}{r-s}- \frac{|G'|}{r}$. Clearly, $b\geq 0$. If $b>0$, then we shall take extra copies of $K_{r-s+1}$ from $\mathcal{H}_0$ which would be contracted into edges so as to achieve~\ref{A2}. It follows from~\ref{B1} that $b\leq r\alpha n$.  
Then as $\alpha \ll \eps$, we can greedily take $(r-s)b$ vertex-disjoint copies of $K_{r-s+1}$ from $\mathcal{H}_0-V(\mathcal{H}\cup \mathcal{R})$, denoted as $\mathcal{F}$. If $b= 0$, then take $\mathcal{F}=\emptyset$. Let $B'':=B'\setminus V(\mathcal{F})$. Clearly, $(r-s)\mid |B''|$.

\medskip
\noindent {\bf Step 2.2. Construct a $K_{r-s}$-factor in $G'[B'']$.} 

The case $r-s=1$ is trivial. For $r-s\ge 2$, it follows from~\ref{B07} that $G'[B'']$ admits no ${\gamma}^2$-independent set of size $\frac{|B''|}{r-s}$. It is routine to check that 
\begin{align}\label{mini-degree}  \de(G'[B''])\ge \Big(1-\frac{1}{r-s}-\beta\Big)\cdot|B''|.
\end{align}
If $r-s\ge 3$, then Theorem~\ref{thm:Non-extremal case r>3} implies that there is a $K_{r-s}$-factor in $G'[B'']$ {since $\beta\ll\gamma$}, as desired.

Now, we consider \( r - s = 2 \). By Lemma~\ref{thm:r-s=2}, we conclude that either \(G'[B'']\) contains a perfect matching (in which case we are done), or it is disconnected with two odd components. 
Suppose we are in the latter case that $G'[B'']=G_1\cup G_2$, where both $|G_1|$ and $|G_2|$ are odd. It follows from~\eqref{mini-degree} that $|G_1|,|G_2|\geq \delta(G'[B''])\geq n-3\beta n$. Then each $G_i$ contains a triangle, say $T_i$ for $i\in [2]$.
Moreover,  by the argument in Claim~\ref{claim:good Kr-s in B}, each $T_i$ can be extended to a copy of \(K_r\), denoted by {\(K^{i,j}\)}, by using exactly one $(\alpha^{1/5},A_\ell)$-good vertex from each \({A_{\ell}'}\) with {\(\ell \in [s]\setminus\{j\}\)}, whilst avoiding vertices in ${U}$. Next we proceed according to the following two possible cases. 

\medskip
{\bf Case 1. $|A_s|\geq n$.} 
\medskip

Recall that $G'[A_s'\cap U]$ contains a matching of size {$|A_s'|-\frac{|G'|}{r}+1>0$} when $|A_s|>n$. Together with \ref{B2}, there is an edge $v_sv_s'$ in $G'[A_s'\cap U]$. {Notice that $A_s'\cap U$ may contain a vertex that is not 
$(\alpha^{1/5},A_s)$-good  if $|A_s|=n$.  Without loss of generality, if such a vertex exists, let it be $v_s$. 
By~\ref{B2} and~\ref{B04}, it holds that   $d(v_s,A_i)\geq \beta  n$ and $d(v_s',A_i)\geq |A_i|-2\beta' n$ for all $i\in [s-1]$.}  Following the argument in Claim~\ref{covering-v}, by avoiding vertices in $V(\{K^{1,s},K^{2,s}\})\cup U$, we can find an \((\alpha^{1/5},A_j)\)-good vertex   
\(v_j\in A_j'\) for each \(j\in[s-1]\) and a vertex \(w\in B''\) such that  
\(G'[\{v_1,\dots,v_{s-1},v_s,v_s',w\}]\) forms a clique \(K^1\).  Without loss of generality, assume that $w\in V(G_1)$. We then add $K^1\cup K^{2,s}$ back into $\mathcal{K}$. 
By Lemma~\ref{thm:r-s=2}, the updated subgraph $G'[B''] = (G_1 - w) \cup (G_2 - V(K^{2,s}))$  contains a perfect matching, as desired.

\medskip
{\bf Case 2. $|A_s|<n$.}
\medskip

{Recall that $|A_1|\le \cdots\le |A_s|$. In this case,  $|B|>(r-s)n$. Observe that the $K_r$-tiling  $\mathcal{K}$ has index vector $|\mathcal{K}|(1,\ldots,1,r-s)$. Consequently, in Step $2.1$, either $q>0$  or $|B'|>(r-s)\frac{|G'|}{r}$ holds, which means \(\mathcal{H} \cup \mathcal{F} \ne \emptyset\).}

{Recall that $\mathcal{H}$ and $\mathcal{F}$ are families of vertex-disjoint copies of $K_r$ and $K_3$ obtained in Step~$2.1$, respectively.}
Choose a triangle $K\in \mathcal{F}$ (if $\mathcal{F}\neq \emptyset$, otherwise, choose from a {copy of} $K_r$ in $\mathcal{H}$) with vertex set $\{w_1,w_2,w_3\}\<B$. By~\eqref{mini-degree-K}, we know that $w_1$ is adjacent to some vertex in $V(G_1\cup G_2)$. Without loss of generality, assume that $N(w_1)\cap V(G_1)\neq \emptyset$.  If $K\in\mathcal{F}$, then update $\mathcal{F}$ by replacing the triangle $K$ with $T_2$ in $G_{2}$. By Lemma~\ref{thm:r-s=2}, the updated subgraph induced by  {$ (V(G_1\cup G_2)\setminus   V(T_2))\cup \{w_1,w_2,w_3\}$} has a perfect matching, as desired. If $K$ is chosen from some $K_r$ in $\mathcal{H}$, denoted as $K^2$, then update $\mathcal{H}$ by replacing $K^2$ with $K^{2,j}$ for some $j$ such that they have the same index vector {with respect to $\mathcal{P}'$}. By Lemma~\ref{thm:r-s=2}, we obtain that the current subgraph induced by  $(V(G_1\cup G_2)\setminus V(K^{2,j}))\cup \{w_1,w_2,w_3\}$ contains a perfect matching, as desired. \medskip

It is easy to see that the updated tiling  $\mathcal{K}$ has index vector $|\mathcal{K}|(1,\ldots,1,r-s)$, while the index vectors of updated $\mathcal{H}$ and $\mathcal{F}$ stay unchanged. Moreover,  $\mathcal{K}\cup \mathcal{H}\cup \mathcal{R}$ uses at most one edge in $G'[U\setminus U']$. In all possible cases, \(G'[B'']\) contains a $K_{r-s}$-factor, which is denoted as $\{K^1,\ldots,K^{t}\}$ for $t:=\frac{|B''|}{r-s}$. Let  $\mathcal{F}=\{K^{t+1},\ldots,K^{t
+(r-s)b}\}$ (could be empty if $b=0$).

\medskip
\noindent {\bf Step 2.3. Contracting and then verifying~\ref{A1}-\ref{A2}.}

Contracting each $K^i$ ($i\in [t]$) into a vertex $w_i$ and each $K^j$ ($j\in [t+1,t+(r-s)b]$) into an edge $w_jw_j'$, yields a new vertex set $B^*$ of size $t+2(r-s)b$. 
We then construct an auxiliary graph $G^*$ with $V(G^*)=\big (\bigcup_{i\in[s]}A_i'\big)\cup B^*$, and $xy\in E(G^*)$ if and only if one of the following holds:
\begin{itemize}
    \item $x\in A_i'$, $y\in A_j'$, and $xy\in E(G)$ for distinct $i,j\in[s]$; 
    \item $x=w_i\in B^*$, $y\in A_j'$, and $y\in N_G(V(K^i))$ for $j\in [s]$ and $i\in[t]$;
    \item $x\in \{w_j,w_j'\}\subseteq B^*$, $y\in A_i'$, and $y\in N_G(V(K^j))$ for $i\in [s]$ and $j\in[{t+1,t+(r-s)b}]$;
    \item $\{x,y\}=\{w_j,w_j'\}$ for some $j\in[t+1,t+(r-s)b].$
\end{itemize}
Note that $\mathcal{P}^*=\{A_1',\ldots,A_s',B^*\}$ is a partition of $G^*$.  We first estimate the sizes of $|G^*|$ and $|B^*|$ to verify~\ref{A1}. 
\medskip

\textbf{(i)~Verify $|B^*|$.}\medskip


Notice that 
\begin{align*}
|B^*|=\frac{|B''|}{r-s}+2(r-s)b
&=\frac{|B'|-(r-s+1)(r-s)b}{r-s}+2(r-s)b \nonumber\\ 
&=\frac{|B'|}{r-s}+(r-s-1)b\nonumber \\ 
&=\frac{|G'|}{r}+(r-s)b.
\end{align*}
It follows that 
\begin{align}\label{b>0}
|G^*|&=\sum_{i=1}^s|A_i'|+|B^*|=\sum_{i=1}^s|A_i'|+\frac{|G'|}{r}+(r-s)\cdot\Big(\frac{|B'|}{r-s}- \frac{|G'|}{r}\Big) \nonumber\\
&=\sum_{i=1}^s|A_i'|+|B'|+\frac{1-r+s}{r}|G'|=|G'|+\frac{1-r+s}{r}|G'|=(s+1)\frac{|G'|}{r}.
\end{align}
As $b\leq r\alpha n$ and $\frac{|G|}{r}\leq 2\frac{|G^*|}{s+1}$, \eqref{b>0} implies that 
\begin{align}\label{equal}
\frac{|G^*|}{s+1}= \frac{|G'|}{r}\ \text{and}\  |B^*|-\frac{|G^*|}{s+1}=(r-s)b\leq r^2\alpha n\leq 2r^2\alpha \frac{|G^*|}{s+1}.
\end{align}
Hence, $B^*$ satisfies~\ref{A1} {with parameter $2r^2\alpha$ in place of $\alpha$}.\medskip

\textbf{(ii)~Verify $|A_i'|$.}\medskip

By \eqref{G'} and \eqref{equal}, for each $i\in [p]$ we have 
\begin{align*}
|A_i'|-\frac{|G^*|}{s+1}\leq \frac{|G'|}{r}- \frac{|G^*|}{s+1}=0,
\end{align*}
for each $i\in [p+1,s]$  we have 
\begin{align}\notag
|A_i'|-\frac{|G^*|}{s+1}&=|A_i'|-\frac{|G'|}{r}\leq |A_i|-\frac{|V(\mathcal{K})|}{r}-\frac{|V(\mathcal{R})|}{r}-\frac{|G|-|V(\mathcal{K}\cup \mathcal{H}\cup \mathcal{R})|}{r}\\\notag
&=|A_i|-n+q\leq 2\alpha n\leq 3\alpha  \frac{|G^*|}{s+1}. 
\end{align}

In a summary, $G^*$ satisfies $|A_1'|,\ldots, |A_{s}'|,|B^*|\leq \frac{|G^*|}{s+1}+2r^2\alpha \frac{|G^*|}{s+1}$ as desired in~\ref{A1} under the partition $\mathcal{P}^*$.  


\medskip


\medskip
\textbf{(iii)~Verify~\ref{A3} and~\ref{A2}.}\medskip

Notice that after covering all bad or exceptional vertices in Step 1, each $D'\in \mathcal{P}'$ satisfies $\De(G'[D'])\le \max\{\be' n, \alpha^{1/5}n\}$. 
{As $\alpha\ll\beta'$}, for each vertex $v\in A_i'$ with $i\in [s]$ and each $D^*\in \mathcal{P}^*\setminus \{A_i'\}$ we have 
$$
d_{G^*}(v,D^*)\geq |D^*|-\beta' n-|V(\mathcal{K}\cup \mathcal{H}\cup \mathcal{R})|\geq \big(1-2\beta'\big)|D^*|; 
$$
for each vertex $v\in B^*$ and each $i\in [s]$ we have 
$$
d_{G^*}(v,A_i')\geq |A_i'|-(r-s)\beta' n-|V(\mathcal{K}\cup \mathcal{H}\cup \mathcal{R})|\geq \big(1-2(r-s)\beta'\big)|A_i'|. 
$$
Therefore,~\ref{A3} holds with parameter $2(r-s)\beta'$ in place of $\beta$. 


It follows from the discussion in \textbf{(ii)} that $|A_i'|>\frac{|G^*|}{s+1}$ only if $|A_i|>n$. 
Therefore,  
\begin{itemize}    
\item\label{C2} for each $A_i'$ with $|A_i'|>\frac{|G^*|}{s+1}$, $G[A_i']$ contains a matching of size at least $|A_i'|-\frac{|G'|}{r}= |A_i'|-\frac{|G^*|}{s+1}$, {by combining \eqref{matching'}, \eqref{equal} and the fact that $\mathcal{K}\cup \mathcal{H}$ uses at most one edge in $E(G'[U\setminus U'])$};
    \item $|B^*|= \frac{|G^*|}{s+1}+(r-s)b$ and $G^*[B^*]$ contains a matching of size $(r-s)b$. 
\end{itemize}
Thus \ref{A2} holds. Notice that $|B^*|\geq  \frac{|G^*|}{s+1}$. By Lemma~\ref{lem:balance}, $G^*$ has a  $K_{s+1}$-factor in which each copy of $K_{s+1}$ satisfies either $V(K_{s+1})\cap B^*=\{w_i\}$ for some $i\in [t]$,  or $V(K_{s+1})\cap B^*=\{w_j,w_j'\}$ for some $j\in [t+1,t+(r-s)b]$. Thus, such a $K_{s+1}$-factor in $G^*$ corresponds to a $K_r$-factor in $G'$. 
Together with $\mathcal{K}\cup \mathcal{H}\cup \mathcal{R}$, we obtain a $K_r$-factor in $G$, as desired.   
\end{proof}

\subsection{Proof of Lemma~\ref{vertex-cover}}
In this subsection, we prove Lemma~\ref{vertex-cover}, which establishes the existence of a subgraph with a large minimum vertex cover.
\begin{proof}[\bf Proof of Lemma~\ref{vertex-cover}]
Let $G$ be an $((r-1)n+1)$-regular graph on $rn$ vertices, and let $\{A_1,A_2,\ldots,A_r\}$ be a balanced partition of $V(G)$. Suppose that $A_i^1$ is a minimum vertex cover of $G[A_i]$ for each $i\in [r]$. Without loss of generality, assume that $|A_1^1|\geq |A_2^1|\geq \cdots \geq |A_r^1|$. Clearly, there exists a vertex cover $C_i$ of $G[A_i]$ with size $|A_2^1|$ for each $i\in [2,r]$. For convenience, let $C_1:=A_1^1$. Denote $x_i:=|C_i|$ and $B_i:=A_i\setminus C_i$ for each $i\in [r]$. Hence, $G[B_i]=\emptyset$ for each $i\in [r]$. We proceed by considering the following two possible cases. 

\medskip
{\bf Case 1. $e\Big(B_1,\bigcup_{j\neq 2}C_j\Big)\leq e\Big(C_2,\bigcup_{j\neq 1}B_j\Big)$.}
\medskip

Since $G$ is  $((r-1)n+1)$-regular, one has 
$$
e(C_2,B_1)\leq |C_2|((r-1)n+1)-e\Big(C_2,\bigcup_{j\neq 1}B_j\Big).
$$
Note that as $x_2=x_3=\cdots=x_r$,
$$
e\Big(B_1,\bigcup_{j\neq 1}B_j\Big)\le\sum_{j\neq1}(n-x_1)(n-x_j)=(n-x_1)(r-1)(n-x_2).
$$
Therefore, we have
\begin{align*}
e\Big(B_1,\bigcup_{j\neq 2}C_j\Big)&=|B_1|((r-1)n+1)-e(B_1,C_2)-e\Big(B_1,\bigcup_{j\neq 1}B_j\Big)\\
&\geq (n-x_1)((r-1)n+1)-x_2((r-1)n+1)+e\Big(C_2,\bigcup_{j\neq 1}B_j\Big)-e\Big(B_1,\bigcup_{j\neq 1}B_j\Big)\\
&\geq (n-x_1-x_2)((r-1)n+1)+e\Big(C_2,\bigcup_{j\neq 1}B_j\Big)-(n-x_1)(r-1)(n-x_2).
\end{align*}
By our assumption in this case, one has
\begin{align*}
(n-x_1-x_2)((r-1)n+1)\leq (n-x_1)(r-1)(n-x_2).
\end{align*}
It follows that $n\leq (r-1)x_1x_2+(x_1+x_2)\leq (r-1)x_1^2+2x_1.$ Thus, $x_1\geq \frac{1}{r}\sqrt{n}$, as desired.

\medskip
{\bf Case 2. $e\Big(B_1,\bigcup_{j\neq 2}C_j\Big)> e\Big(C_2,\bigcup_{j\neq 1}B_j\Big)$.}
\medskip

Notice that
$$
e\Big(\bigcup_{j\neq 2}C_j,\bigcup_{j\neq 1}B_j\Big)\leq \Big|\bigcup_{j\neq 2}C_j\Big|((r-1)n+1)-e\Big(\bigcup_{j\neq 2}C_j,B_1\Big).
$$
Therefore,\allowdisplaybreaks
\begin{align*}
e\Big(\bigcup_{j\neq 1}B_j,C_2\Big)=&\Big|\bigcup_{j\neq 1}B_j\Big|((r-1)n+1)-e\Big(\bigcup_{j\neq 1}B_j,\bigcup_{j\neq 2}C_j\Big)-e\Big(\bigcup_{j\neq 1}B_j,\bigcup_{t\in [r]}B_t\Big)\\
\geq &\Big|\bigcup_{j\neq 1}B_j\Big|((r-1)n+1)-\Big|\bigcup_{j\neq 2}C_j\Big|((r-1)n+1)+e\Big(\bigcup_{j\neq 2}C_j,B_1\Big)-\sum_{j\neq 1}\sum_{t\neq j}e(B_j,B_t)\\
\geq &((r-1)n+1)\Big((r-1)n-\sum_{j\neq 1}x_j-\sum_{j\neq 2}x_j\Big)+e\Big(\bigcup_{j\neq 2}C_j,B_1\Big)-\sum_{j\neq 1}\sum_{t\neq j}(n-x_j)(n-x_t)\\
=&((r-1)n+1)\Big((r-1)n-(r-1)x_2-x_1-(r-2)x_2\Big)+e\Big(\bigcup_{j\neq 2}C_j,B_1\Big)\\
&-(r-1)(n-x_2)\Big((r-1)n-x_1-(r-2)x_2\Big)\\
=&(r-1)n-x_1-(2r-3)x_2-(r-1)x_1x_2-(r-1)(r-2)x_2^2+e\Big(\bigcup_{j\neq 2}C_j,B_1\Big).
\end{align*}
By the assumption in this case, one has
$$
(r-1)n\leq x_1+(2r-3)x_2+(r-1)x_1x_2+(r-1)(r-2)x_2^2\leq (r-1)\big((r-1)x_1^2+2x_1\big).
$$
Thus, $x_1\geq \frac{1}{r}\sqrt{n}$, as desired. 
\end{proof}


\section{Concluding remarks}

{Dragani\'{c}, Keevash and M\"{u}yesser \cite{Keevash2025} proposed a plausible class of extremal construction for Conjecture~\ref{conj:K_r factor}: slightly unbalanced complete $r$-partite graphs with a suitable  factor in the largest part $A$.  They conjectured that the optimal constant $c$ should be  $\frac{1}{r^2}$, where in the random induced subgraph $G[S]$ we have one
probability factor of $\frac{1}{r}$ for $r$ dividing $|S|$ and another for $A\cap S$ being the largest part. In what follows, we present a construction to demonstrate that these two events alone are insufficient to guarantee the existence of a  $K_r$-factor in $G[S]$.}

{Assume that $r\geq 3$. We first consider an $rn$-vertex complete $r$-partite graph $G_0$ with vertex partition $A_1\cup \ldots \cup A_r$,  where $|A_1|=n+r-1$ and $|A_2|=\cdots=|A_r|=n-1$. Let $G$ be a graph obtained from $G_0$ by embedding an $r$-regular graph into  $A_1$ {such that $G[A_1]$ is triangle-free}. Clearly, $G$ is  $((r-1)n+1)$-regular and $G[A_i]$ is empty for every $i\in [2,r]$. Consider an arbitrary vertex subset $S\subseteq V(G)$. A necessary condition for $G[S]$ to contain a $K_r$-factor is that 
\begin{itemize}
    \item $r\mid |S|$;
    \item $\frac{|S|}{r}\le |A_1 \cap S|\le \frac{2|S|}{r}$;
   \item $|A_i \cap S|\le \frac{|S|}{r}$ for all $i \in [2,r]$. 
\end{itemize}
This implies that the optimal constant 
$c$ in Conjecture~\ref{conj:K_r factor} must be strictly less than $\frac{1}{r^2}$ {for every $r\ge 3$}. }

{In Theorem~\ref{thm:main thm} we assume that $G$ is $((r-1)n+1)$-regular, and we have remarked that the regularity requirement cannot be relaxed to a mere minimum degree condition. The need for regularity, however, arises only in Lemma~\ref{vertex-cover}. In fact, by carefully examining the proof of Lemma~\ref{vertex-cover}, we can replace the regularity assumption by a weaker degree condition: 
\begin{align}\label{mini-degree-G}
(r-1)n+1\leq \de(G)\leq \De(G)\leq (r-1)n+n^{0.5}.
\end{align}
Therefore, we can slightly extend Theorem~\ref{thm:main thm} under~\eqref{mini-degree-G}. Furthermore, one may also consider the more general setting of a $d$-regular graph where $d\ge(r-1)n+1$, and ask for determining the probability that its random induced subgraph $G[p]$ contains a $K_r$-factor, where $p\in(0,1)$.}

\vspace{0.5cm}
\textbf{Acknowledgement.}
The third author would like to thank Professor Jie Han for his valuable discussions and suggestions.

\bibliographystyle{abbrv}
\bibliography{ref.bib}
\end{document}